\def \[{\begin{equation}}
\def \]{\end{equation}}
\newtheorem{thm}{Theorem}[section]
\newtheorem{prop}[thm]{Proposition}
\newtheorem{defi}[thm]{Definition}
\newtheorem{claim}{Claim}[thm]\counterwithout{claim}{section}
\newtheorem{case}{Case}[thm]\counterwithout{case}{section}
\newtheorem{subcase}{Subcase}[case]
\newtheorem{lem}[thm]{Lemma}
\newtheorem{conj}[thm]{Conjecture}
\begin{document}

\setlength{\baselineskip}{20pt}
\begin{center}

{\Large \bf The minimum degree of minimal 2-extendable
claw-free graphs$^{\text{\ding{73}}}$}

\vspace{4mm}

{Jing Guo$^{\rm 1, 2}$, Fuliang Lu$^{\rm 3}$, Heping Zhang$^{\rm 1 \ast}$}

\vspace{4mm}

\footnotesize{$^{\rm 1}$School of Mathematics and Statistics, Lanzhou University,
Lanzhou, 730000, PR China}

\footnotesize{$^{\rm 2}$School of Science, Lanzhou University of
Technology, Lanzhou, 730050, PR China}

\footnotesize{$^{\rm 3}$School of Mathematics and Statistics,
Minnan Normal University, Zhangzhou, 363000, PR China}
\renewcommand\thefootnote{}
\footnote{$^{\text{\ding{73}}}$ This work is supported by NSFC\,
(Grant No. 12271229 and 12271235)
and NSF of Fujian Province (Grant No. 2021J06029).}

\footnote{$^{\ast}$ The corresponding author. \\E-mail addresses:
guoj@lut.edu.cn (J. Guo), flianglu@163.com (F. Lu)
and zhanghp@lzu.edu.cn (H. Zhang).}

\end{center}

\noindent {\bf Abstract}:
A connected graph $G$ with a perfect matching is said to be $k$-extendable for integers $k$,
$1 \leq k\leq \frac{|V(G)|}{2}-1$, if any matching in $G$ of size $k$ is contained
in a perfect matching of $G$.
A $k$-extendable graph is minimal if the deletion of any edge results in a graph
that is not $k$-extendable. In 1994,
Plummer proved that every $k$-extendable claw-free graph
has minimum degree at least $2k$.
Recently, He et al. showed that every minimal 1-extendable graph
has minimum degree 2 or 3.
In this paper, we prove that the minimum degree of a minimal
2-extendable claw-free graph is either $4$ or $5$.

\vspace{2mm}
\noindent{\bf Keywords}: Perfect matching;
Minimal 2-extendable graph; Claw-free graph; Minimum degree
\vspace{2mm}

\noindent{AMS subject classification:} 05C70,\ 05C07

{\setcounter{section}{0}
\section{Introduction}\setcounter{equation}{0}

Graphs considered in this paper are finite, undirected and simple.
We follow \cite{BM} for undefined notation and terminology.
Let $G$ be a graph with vertex set $V(G)$ and edge set $E(G)$.
The {\em order} of $G$ is the cardinality of $V(G)$
and its {\em size} is the number of its edges.
A {\em matching} $M$ of $G$ is a subset of $E(G)$ such that any two edges of $M$ have
no end-vertex in common. A matching of size $k$ is called a {\em $k$-matching}.
If every vertex of $G$ is incident with exactly one edge of $M$,
then $M$ is called a {\em perfect matching} of $G$.
A connected graph of order at least two is {\em matching covered}
if each of its edges lies in a perfect matching.

A connected graph $G$ with a perfect matching is said to be {\em $k$-extendable}
for integers $k$, $1 \leq k\leq \frac{|V(G)|}{2}-1$, if each $k$-matching of $G$ is contained
in a perfect matching of $G$.
Every matching covered graph is 1-extendable except $K_2$.
Plummer \cite{P} introduced $k$-extendable graphs
which play an elementary role in matching theory
and obtained the following two basic properties of $k$-extendable graphs.

\begin{thm}[\cite{P}]\label{P}
Let $G$ be a graph of order $2n$ and $1\leq k\leq n-1$.
If $G$ is $k$-extendable, then $G$ is $(k-1)$-extendable and $(k+1)$-connected.
\end{thm}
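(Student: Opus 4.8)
I would prove the two conclusions separately, using throughout the equivalent formulation that a connected graph $G$ with a perfect matching is $k$-extendable if and only if $G-V(M)$ has a perfect matching for every matching $M$ of $G$ with $|M|=k$; I write $o(H)$ for the number of odd components of a graph $H$.

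\emph{$(k-1)$-extendability.} Let $F$ be a $(k-1)$-matching and suppose, for contradiction, that $G-V(F)$ has no perfect matching. By Tutte's theorem there is a set $W\subseteq V(G)\setminus V(F)$ with $o\big((G-V(F))-W\big)>|W|$, and since $|V(G)\setminus V(F)|=2n-2(k-1)$ is even this forces $o\big((G-V(F))-W\big)\ge |W|+2$. As $2(k-1)<2n-2$, at least four vertices lie outside $V(F)$, and I claim $G-V(F)$ has an edge $e$ (the degenerate case in which $V(G)\setminus V(F)$ is an independent set is treated separately below, once $\delta(G)\ge k+1$ is available). Any such $e$ lies inside $W$, inside a single component of $(G-V(F))-W$, or between $W$ and one such component — it cannot join two distinct components, since both ends of $e$ lie outside $V(F)\cup W$. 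A short case analysis then shows that for $M:=F\cup\{e\}$, a $k$-matching, and $W':=W\setminus V(e)$ one has $o\big((G-V(M))-W'\big)>|W'|$, so $G-V(M)$ has no perfect matching, contradicting $k$-extendability. In the degenerate case, $I:=V(G)\setminus V(F)$ is independent with $|I|\ge 4$ and every vertex of $I$ has all its neighbours in $V(F)$; using $\delta(G)\ge k+1$ one routes two vertices of $I$ to distinct vertices of $V(F)$, frees a vertex covered by $F$ and sends it back into $I$, obtaining a $k$-matching whose removal leaves at least two mutually non-adjacent vertices and hence no perfect matching — again a contradiction.

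\emph{$(k+1)$-connectivity.} Suppose $G$ has a cutset $S$ with $|S|\le k$, and take $S$ of minimum size; then every vertex of $S$ has a neighbour in each component of $G-S$, say $C_1,\dots,C_r$ with $r\ge 2$. Starting from a perfect matching of $G$ and using these adjacencies, the plan is to build a $k$-matching $M$ with $S\subseteq V(M)$ that matches all of $S$ into one component and is then padded with further edges inside the components so that $|M|=k$, arranged so that $G-V(M)$ has a component of odd order; since $|V(G)|$ is even and $|S|\le k<n$ there is room to do this. Such an $M$ is contained in no perfect matching of $G$, contradicting $k$-extendability.

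The step I expect to be the real obstacle is the construction in the connectivity part: guaranteeing that a $k$-matching with the prescribed saturation-and-parity behaviour actually exists, in particular when some $C_i$ are small or when $S$ cannot be matched injectively into any single component, which forces a more delicate choice of which edges of the perfect matching and of the $C_i$ to put into $M$. The barrier-sliding argument for $(k-1)$-extendability is comparatively routine once $\delta(G)\ge k+1$ is in hand, so it is natural to establish $(k+1)$-connectivity first and then deduce $(k-1)$-extendability from it.
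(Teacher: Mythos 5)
First, note that the paper itself gives no proof of this theorem: it is quoted from Plummer \cite{P}. So your attempt has to stand on its own, and as written it has two genuine gaps. The more serious one is the $(k+1)$-connectivity half, where you yourself flag that the central construction is missing: given a cutset $S$ with $|S|\le k$, you need a matching $M$ of size at most $k$ that saturates $S$, avoids a suitable component of $G-S$, and has the right parity so that $G-V(M)$ leaves an odd component; nothing in your plan produces such an $M$ when $S$ cannot be matched injectively into a single component, or when the components (or $G[S]$) are too small or edgeless for the ``padding'' edges to exist. Since you also intend to extract $\delta(G)\ge k+1$ from this half and feed it into the other half, the whole chain is incomplete; and if you later repair it by the common device of invoking lower extendability ($G$ being $s$-extendable for $s=|S|$), you must watch the order of the two halves to avoid circularity.

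The second gap is in the degenerate case of the $(k-1)$-extendability half, where $I=V(G)\setminus V(F)$ is independent. Your construction (match two vertices of $I$ into $V(F)$, ``free'' a vertex $y$ of $V(F)$ and re-match it into $I$) is both unjustified and concludes with a false inference. Unjustified: nothing guarantees the freed vertex $y$ has any neighbour in $I$. False inference: ``leaves at least two mutually non-adjacent vertices and hence no perfect matching'' is wrong in general; your leftover set is $(I\setminus\{u_1,u_2,u_3\})\cup\{y\}$ with $y\in V(F)$, and $y$ may well be adjacent to the remaining $I$-vertices --- in the extreme case $k=n-1$ (allowed by the hypothesis, with $|I|=4$) the leftover is a single $I$-vertex together with $y$, which may be an edge, so no contradiction arises. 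A clean repair that needs neither $\delta(G)\ge k+1$ nor case analysis: take a perfect matching $F^{*}$ of $G$; in $F\,\triangle\,F^{*}$ every $F$-exposed vertex is an end of an alternating path, and since $|I|\ge 4$ there is such a path $P$ joining two vertices $u,w\in I$ with all internal vertices in $V(F)$. Then $F\,\triangle\,E(P)$ is a $k$-matching covering exactly $V(F)\cup\{u,w\}$, so its removal leaves the independent set $I\setminus\{u,w\}$ of size $2n-2k\ge 2$, which has no perfect matching, contradicting $k$-extendability. (Your non-degenerate case is correct, though the Tutte-set sliding is more than needed: if $G-V(F)$ has an edge $e$, any perfect matching containing the $k$-matching $F\cup\{e\}$ already contains $F$.)
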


Much attention to the theory of matching extension has been paid.
We refer the reader to Lov\'{a}sz and Plummer's book \cite{LP} for
an excellent survey of the matching theory, and \cite{PLM,YL} for recent progress.

A matching covered graph which is free of nontrivial tight cuts is
a {\em brick} if it is nonbipartite and a {\em brace} if it is bipartite.
Edmonds et al. \cite{ELW} and Lov\'{a}sz \cite{LO} proposed and developed
the ``tight cut decomposition" of matching covered graphs into
list of bricks and braces in an essentially unique manner.
This decomposition reduces several problems in matching covered graphs to
bricks and braces (for example, a matching covered graph is Pfaffian if and
only if its bricks and braces are Pfaffian \cite{VVM}).
Plummer \cite{P} provided a connection between the 2-extendability
and bricks and braces.

\begin{thm}[\cite{P}]\label{PP}
Let $G$ be a $2$-extendable graph. Then $G$ is either a brick or a brace.
\end{thm}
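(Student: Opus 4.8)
The plan is to verify the two defining conditions behind ``brick or brace'': that $G$ is matching covered, and that $G$ possesses no nontrivial tight cut. Once these are in hand the dichotomy is automatic, since a matching covered graph with no nontrivial tight cut is a brick when nonbipartite and a brace when bipartite. The first condition is immediate from Theorem~\ref{P}: because $G$ is $2$-extendable it is $1$-extendable, and $1$-extendability is precisely the statement that every edge lies in a perfect matching, i.e.\ that $G$ is matching covered. So the real content is to exclude nontrivial tight cuts.

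I would argue by contradiction. Suppose $G$ has a nontrivial tight cut $C=\partial(S)$, so $|S|\ge 2$ and $|V(G)\setminus S|\ge 2$, and every perfect matching of $G$ meets $C$ in exactly one edge. The crux is to exhibit two \emph{independent} edges $e_1,e_2\in C$, and I would extract them from connectivity. By Theorem~\ref{P} again, $2$-extendability forces $G$ to be $3$-connected, hence $3$-edge-connected, so the edge cut $C$ satisfies $|C|\ge 3$. The edges of $C$ run between $S$ and $V(G)\setminus S$ and thus span a bipartite subgraph containing no triangle; if no two of them were independent, then by K\H{o}nig's theorem all of $C$ would share a single endpoint $v$, and removing $v$ would disconnect the (nonempty, since $|S|,|V(G)\setminus S|\ge 2$) two shores, making $v$ a cut vertex and contradicting $3$-connectivity. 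Hence the desired independent pair $e_1,e_2\in C$ exists. Now $\{e_1,e_2\}$ is a $2$-matching, and since $1\le 2\le \frac{|V(G)|}{2}-1$ the hypothesis of $2$-extendability applies and yields a perfect matching $M\supseteq\{e_1,e_2\}$. But then $M$ contains two edges of $C$, contradicting the tightness of $C$, which permits exactly one. This contradiction shows $G$ admits no nontrivial tight cut.

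I expect the one genuinely delicate step to be the production of two independent cut edges, where one must rule out the degenerate ``star'' configuration in which every edge of $C$ is incident to a common vertex. The clean resolution is the connectivity input supplied by Theorem~\ref{P}: it simultaneously guarantees $|C|\ge 3$ and forbids a cut vertex, while the bipartite structure of $C$ removes the only other matching-free possibility (a triangle). Everything else—the reduction to matching coveredness, the tight-cut counting argument, and the final appeal to the definitions of brick and brace—is routine bookkeeping with the notions introduced above.
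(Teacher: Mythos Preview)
The paper does not prove Theorem~\ref{PP}; it is quoted from \cite{P} as background and no argument is supplied. So there is no ``paper's own proof'' to compare against.

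That said, your proof is correct and fits the paper's framework exactly, since the paper \emph{defines} bricks and braces via the absence of nontrivial tight cuts. Your two steps are both sound: matching coveredness follows from Theorem~\ref{P} as you say, and the tight-cut argument is clean. The extraction of two independent edges from a nontrivial tight cut is the only place requiring care, and your handling is fine: $3$-connectivity (again Theorem~\ref{P}) gives $|C|\ge 3$; bipartiteness of the cut rules out a triangle; so a pairwise-intersecting family of edges in $C$ must be a star centered at some $v$, and deleting $v$ separates the two nontrivial shores, contradicting $3$-connectivity. The final contradiction---a perfect matching containing two edges of a tight cut---is immediate.

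One cosmetic remark: the clause ``since $1\le 2\le \frac{|V(G)|}{2}-1$'' is redundant, because this inequality is already baked into the hypothesis that $G$ is $2$-extendable; you can simply invoke $2$-extendability directly on $\{e_1,e_2\}$.
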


We refer the reader to \cite{MHCH} for more about bricks and braces.

A $k$-extendable graph is {\em minimal} if the deletion of any edge
results in a graph that is not $k$-extendable.
The {\em degree} of a vertex $u$ in $G$, denoted by $d_{G}(u)$,
is the number of edges incident with $u$.
Denote by $\delta(G)$ the {\em minimum degree} of $G$,
which is the minimum value in degrees of all vertices of $G$.
Anunchuen and Caccetta \cite{AC1992} proved that, apart from the complete
graph $K_{2n}$, every minimal $k$-extendable graph of order $2n$
has minimum degree at most $n+k-1$ and \cite{AC1997} obtained the following result.

\begin{thm}[\cite{AC1997}]\label{AC1997}
Let $G$ be a minimal $k$-extendable graph of order $2n$ and $1\leq k\leq n-1$.
Then either $k+1\leq \delta(G)\leq n$ or $\delta(G)\geq 2k+1$.
\end{thm}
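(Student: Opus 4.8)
\section{Proof proposal}

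The plan is to argue by contradiction, assuming the minimum degree falls in the ``forbidden window'' and deriving a violation of minimality. First note that Theorem \ref{P} already gives $\delta(G)\geq k+1$, so the only content of the statement is to exclude the values $n+1\leq\delta(G)\leq 2k$; this window is nonempty only when $2k\geq n+1$, so I assume that regime and suppose for contradiction that some vertex $v$ attains $n+1\leq d_G(v)=\delta(G)\leq 2k$. The first useful observation is that $\delta(G)\geq n+1$ forces $G$ to be very dense: for any edge $e$, $G-e$ still has minimum degree at least $n$, so by Dirac's theorem $G-e$ is Hamiltonian and in particular has a perfect matching. Hence the failure of $k$-extendability of $G-e$ guaranteed by minimality can never be ``$G-e$ has no perfect matching''; it is always of barrier type. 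This removes an entire case and lets me work with a single clean mechanism.

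Next I would set up the Tutte-barrier reformulation. Writing $o(H)$ for the number of odd components of a graph $H$ and $\nu(H)$ for its matching number, a connected graph $G$ of even order with a perfect matching is $k$-extendable if and only if $o(G-S)\leq|S|-2k$ for every $S\subseteq V(G)$ with $\nu(G[S])\geq k$ (this follows by applying Tutte's theorem to $G-V(M)$ over all $k$-matchings $M$). I would then translate minimality edge by edge: for each edge $e=xy$ incident with $v$, since $G-e$ has a perfect matching but is not $k$-extendable, there is a $k$-matching $M$ with $x,y\notin V(M)$ such that every perfect matching of $G-V(M)$ uses $e$. Applying Tutte to $G-V(M)$ and a short parity analysis of edge deletion (deleting $e$ can only raise the odd-component count, and only by splitting an \emph{even} component of the relevant subgraph into two odd halves), one obtains a tight barrier $S=S_e$ with $x,y\notin S$, $\nu(G[S])\geq k$, and $o(G-S)=|S|-2k$, across which $e$ is a bridge splitting an even component $C=C_x\cup C_y$ of $G-S$ into the two odd parts $C_x\ni x$ and $C_y\ni y$. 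Equivalently, $e$ is a forced edge inside the even component $C$.

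The heart of the argument is then a counting contradiction extracted from this barrier. Since $C$ is a component of $G-S$, every vertex of $C$ has all its neighbours in $C\cup S$, and the analogous statement holds for each odd component of $G-S$; combining the vertex count $2n=|S|+\sum_{\text{components}}|\cdot|$ with the identity $o(G-S)=|S|-2k$ gives the range $2k\leq|S|\leq n+k-1$ for the barrier size. I would feed the degree hypothesis $\delta(G)\geq n+1$ into this bookkeeping together with the $(k+1)$-connectivity of $G$ (which controls how the odd side $C_x$ can attach to $S\cup\{y\}$) and the presence of the $k$-matching inside $S$, aiming to force $\delta(G)\geq 2k+1$ and thereby contradict $\delta(G)\leq 2k$.

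\textbf{Main obstacle.} The difficulty is precisely that in this regime one has $|S|\geq 2k\geq\delta(G)$, so the naive estimate ``a small component must contain a low-degree vertex'' degenerates (the bound $|K|\geq \delta(G)+1-|S|$ on a component $K$ becomes vacuous once $|S|\geq\delta(G)$). Consequently the contradiction cannot come from a single edge by crude component-size counting; the key step I expect to be hard is to pass to a sharper structure by peeling a $k$-matching of $G[S]$ out of the barrier, reducing to a Tutte-tight barrier $S'=S\setminus V(M)$ of size $|S|-2k$ with $o(G-S')=|S'|$, and then running a Gallai--Edmonds/König-type argument on the incidence between $S'$ and the (factor-critical) odd components, where the forced edge $e$ and the degree hypothesis can finally be played against each other. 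Making this reduction yield the exact threshold $2k+1$, rather than the weaker $\delta(G)\leq n+k-1$ that a direct count produces, is the crux of the proof.
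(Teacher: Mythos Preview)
The paper does not contain a proof of Theorem~\ref{AC1997}; it is quoted from Ananchuen and Caccetta's 1997 paper as background, with no argument supplied. There is therefore nothing in the present paper to compare your attempt against.

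On the merits of your proposal: the framework is correct. The reduction to the window $n+1\le\delta(G)\le 2k$, the observation that $G-e$ retains a perfect matching in that regime, and the Tutte-barrier characterisation of $k$-extendability are all sound; your derivation of a barrier $S_e$ with $o(G-S_e)=|S_e|-2k$ across which $e$ is forced is essentially the Anunchuen--Caccetta criterion (Theorem~\ref{AC1994} here). However, what you have written is a strategy outline that explicitly stops at the decisive step, not a proof. You yourself note that once $|S_e|\ge 2k\ge\delta(G)$ the crude bound $|K|\ge\delta(G)+1-|S|$ is vacuous, and your proposed fix --- peel a $k$-matching off $S$ and run a Gallai--Edmonds argument on the resulting Tutte-tight barrier --- is only described, never executed. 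Nothing in the text actually forces $\delta(G)\ge 2k+1$; your final sentence, that ``making this reduction yield the exact threshold $2k+1$\ldots is the crux of the proof,'' is an acknowledgement of an unfilled gap rather than a proof step. To close it you would need either to exploit minimality at \emph{several} edges incident with the minimum-degree vertex simultaneously (so that the barriers $\{S_e\}$ interact and their intersection pattern can be counted), or to carry out a precise double-count between $S$ and the odd components using $(k+1)$-connectivity together with the $k$-matching inside $S$; neither argument is present.
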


Moreover, Lou \cite{L} showed that the minimum degree of
a minimal $k$-extendable bipartite graph is $k+1$.
Lou and Yu \cite{LY} conjectured that every minimal $k$-extendable graph
of order $2n$ with $n\leq 2k$ has minimum degree $k+1, 2k$ or $2k+1$.

A graph containing no induced subgraph isomorphic to the complete
bipartite graph $K_{1,3}$ is said to be {\em claw-free}.
Claw-free graphs have been a popular type of graphs in various graph structures
(for example, 1-factor and Hamilton cycle; see \cite{CS}),
which may involve all possible values of $k$ from 1 to $n-2$.
Plummer \cite{PL} obtained a lower bound of the minimum degree of $k$-extendable claw-free graphs.

\begin{thm}[\cite{PL}]\label{PL}
Let $G$ be a $k$-extendable claw-free graph of order $2n$ and $1\leq k\leq n-1$.
Then $\delta(G)\geq 2k$.
\end{thm}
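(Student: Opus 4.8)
The plan is to argue by contradiction: I assume $G$ has a vertex $u$ with $d_G(u)=d\le 2k-1$ and produce a $k$-matching that cannot be extended to a perfect matching. By Theorem~\ref{P}, $G$ is $(k+1)$-connected, so $d\ge k+1$, and thus $k+1\le d\le 2k-1$. The starting point is the reformulation of extendability: a $k$-matching $M$ extends to a perfect matching of $G$ if and only if $G-V(M)$ has a perfect matching. Hence it suffices to exhibit a matching $M$ with $|M|=k$, $u\notin V(M)$ and $N_G(u)\subseteq V(M)$; for such an $M$ the vertex $u$ is isolated in $G-V(M)$, so that graph (which has the even order $2n-2k$) contains an odd component and admits no perfect matching, contradicting $k$-extendability.

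Building such an $M$ is where claw-freeness enters. Since $G$ is claw-free, $N_G(u)$ contains no independent set of size $3$, i.e.\ $\alpha(G[N_G(u)])\le 2$, so a maximum matching $F$ of the induced subgraph $G[N_G(u)]$ leaves at most two neighbours of $u$ unsaturated, and these form an independent set. Writing $\nu=|F|$, we get $d-2\nu\le 2$, whence $\nu\ge\lceil(d-2)/2\rceil\ge d-k$ throughout the range $k+1\le d\le 2k-1$. I would first match the (at most two) leftover neighbours to distinct vertices of $V(G)\setminus(N_G(u)\cup\{u\})$, producing a matching $M_0$ that saturates $N_G(u)$, misses $u$, and has size $d-\nu\le k$. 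I would then enlarge $M_0$ to size exactly $k$ by repeatedly applying the swap that replaces an edge $xy$ with both ends in $N_G(u)$ by two edges $xa,yb$ running to fresh vertices of $V(G)\setminus(N_G(u)\cup\{u\})$; each swap preserves the saturation of $N_G(u)$ and the exclusion of $u$ while raising the size by one. Since $M_0$ carries $\nu$ internal edges and $k-(d-\nu)\le\nu$ (because $d\ge k+1$), there are always enough internal edges to perform the required $k-(d-\nu)$ swaps and reach the desired $k$-matching $M$.

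The main obstacle, which I expect to absorb most of the real work, is guaranteeing that the ``outside'' vertices called for above genuinely exist and can be chosen distinct from one another and from $V(F)$: a priori a leftover neighbour, or an endpoint of an internal edge, might have all of its neighbours inside $N_G(u)\cup\{u\}$, which would block the routing. I would resolve this by a case analysis on the local structure of $G[N_G(u)]$, using claw-freeness to rule out the obstructing configurations; concretely, a neighbour whose neighbourhood is trapped in $N_G(u)\cup\{u\}$, combined with the independence bound $\alpha(G[N_G(u)])\le 2$, should force either a claw centred near $u$ or a vertex cut of size at most $k$ separating $u$ from the rest of $G$, contradicting claw-freeness or the $(k+1)$-connectivity provided by Theorem~\ref{P}. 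Finally I would treat the few degenerate cases separately (for instance when $G[N_G(u)]$ already has a perfect matching, or when $2n-2k$ is very small), in each case still isolating $u$ by a $k$-matching, thereby completing the contradiction and establishing $\delta(G)\ge 2k$.
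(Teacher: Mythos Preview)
The paper does not give its own proof of Theorem~\ref{PL}; it is quoted from Plummer~\cite{PL} and used only as a black box (to supply the lower bound $\delta(G)\ge 4$ at the start of the proof of Theorem~\ref{main}). So there is no ``paper's proof'' to compare your proposal against.

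On the merits of your sketch: the overall strategy---assume $d_G(u)\le 2k-1$, use claw-freeness to get $\alpha(G[N(u)])\le2$, and then build a $k$-matching that saturates $N(u)$ while avoiding $u$, thereby isolating $u$ in $G-V(M)$---is exactly the right idea and is essentially Plummer's approach. Your counting ($\nu\ge d-k$, $|M_0|=d-\nu\le k$, and $k-(d-\nu)\le\nu$ so enough internal edges are available for swaps) is correct. The part you flag as ``the main obstacle'' is genuinely the crux, and as written your proposal does not resolve it: you assert that a trapped neighbour would force a claw or a small cut, but you do not carry out that case analysis, and it is not automatic. For instance, nothing you have said rules out several vertices of $N(u)$ having all their neighbours inside $N[u]$ simultaneously, which could block both the initial external matchings and the swap moves; the claw-free and $(k{+}1)$-connectivity hypotheses do eventually exclude the bad configurations, but the argument needs to be made explicit. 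In short, your plan is sound and aligned with the original proof, but the proposal as it stands is a sketch rather than a proof: the routing step you defer is precisely where the content lies.
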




Recently, Zhang et al. \cite{ZWY} characterized minimal 1-extendable
claw-free graphs.
He et al. \cite{HLX} showed that every minimal 1-extendable graph has
minimum degree 2 or 3.
In this paper, we determine the minimum degree of minimal
2-extendable claw-free graphs.
The following theorem is our main result.

\begin{thm}\label{main}
Let $G$ be a minimal $2$-extendable claw-free graph.
Then $\delta(G)=4$ or $5$.
\end{thm}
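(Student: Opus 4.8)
The plan is to establish the two inequalities $\delta(G)\ge 4$ and $\delta(G)\le 5$ separately, the first being immediate and the second being the substantive part.

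The lower bound $\delta(G)\ge 4$ is free: by Theorem~\ref{PL} applied with $k=2$, every $2$-extendable claw-free graph has $\delta(G)\ge 2k=4$. So the whole difficulty is in proving that a minimal $2$-extendable claw-free graph cannot have $\delta(G)\ge 6$; equivalently, it must contain a vertex of degree $4$ or $5$. I would argue by contradiction: suppose $G$ is minimal $2$-extendable, claw-free, and $\delta(G)\ge 6$. The strategy is to exploit minimality to produce, for each edge $e=uv$, a $2$-matching $\{e_1,e_2\}$ that extends to a perfect matching of $G$ but not of $G-e$; such a ``witness'' configuration forces local structure near $e$ (one of $e_1,e_2$ meets $\{u,v\}$, and the components of $G-e-\{$endpoints of $e_1,e_2\}$ must have a specific parity/barrier structure, by Theorem~\ref{P}-type connectivity and a Tutte--Berge analysis). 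Then I would pick a vertex $w$ of minimum degree and examine the neighborhood $N(w)$: claw-freeness means $N(w)$ has independence number at most $2$, so $G[N(w)]$ is covered by two cliques. The aim is to find an edge $e$ incident with $w$ (or lying inside a dense patch around $w$) whose deletion does \emph{not} destroy $2$-extendability, contradicting minimality; the degree lower bound $\delta\ge 6$ gives enough room in these cliques to reroute any obstructing $2$-matching through an alternating path, showing $e$ was removable.

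Concretely, the key steps, in order, would be: (1) record the consequences of minimality---for every edge $e$ there is a $2$-matching $S_e$ with $S_e$ extendable in $G$ but $G-e-V(S_e)$ having no perfect matching, and translate this via the Gallai--Edmonds/barrier picture into a near-1-factor obstruction; (2) fix $w$ with $d(w)=\delta(G)\ge 6$ and use claw-freeness to write $N(w)=A\cup B$ with $A,B$ cliques, noting $|A|,|B|$ and the edges between them; (3) analyze a carefully chosen edge $e$ lying in $G[N[w]]$---for instance an edge inside the larger clique, or the edge $wx$ for a suitable $x$---and show, using the abundance of common neighbours guaranteed by $\delta\ge 6$ and claw-freeness, that any $2$-matching which fails to extend in $G-e$ can be modified (swap an edge along a triangle or a short alternating cycle through $N(w)$) into one that does, so $G-e$ is still $2$-extendable; (4) conclude $G$ was not minimal, a contradiction, hence $\delta(G)\le 5$.

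The main obstacle I anticipate is step~(3): controlling the \emph{global} obstruction (the perfect matching of $G-e-V(S_e)$ that disappears) using only \emph{local} information around $w$. Claw-freeness and $\delta\ge 6$ give a rich local neighbourhood, but turning ``$G-e$ fails to be $2$-extendable'' into a contradiction requires showing that some specific edge is genuinely non-essential, which means ruling out delicate global barrier configurations---likely through a case analysis on how $V(S_e)$ meets $N[w]$ and on the Gallai--Edmonds structure of $G-e$, together with Theorem~\ref{PP} to know $G$ is a brick or brace and hence has strong connectivity/bicriticality-type properties to lean on. I would expect the proof to split into several cases according to $|A\cap B|$, the location of the critical edge, and whether the obstruction barrier is trivial, with the tightest case being when the two neighbourhood cliques are nearly disjoint and of size close to $3$.
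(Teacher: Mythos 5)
Your lower bound is exactly the paper's: Theorem~\ref{PL} with $k=2$ gives $\delta(G)\ge 4$ at once, and like the paper you reduce everything to excluding $\delta(G)\ge 6$. But the substantive half of your plan has a genuine gap at precisely the step you flag as step~(3). You propose to find an edge $e$ in $G[N[w]]$, $w$ a minimum-degree vertex, whose deletion leaves $G$ $2$-extendable, by ``rerouting'' any obstructing $2$-matching through alternating paths inside the two cliques covering $N(w)$. This cannot work as a local argument: if a $2$-matching $M$ extends in $G$ but not in $G-e$, then every perfect matching of $G-V(M)$ uses $e$ (in particular $V(M)\cap\{u,v\}=\emptyset$, so your parenthetical claim that one of $e_1,e_2$ meets $\{u,v\}$ is the wrong way around), and the resulting barrier/odd-component obstruction in $G-e-V(M)$ can involve a set $S_e$ and odd components lying entirely outside $N[w]$, with the components containing $u$ and $v$ arbitrarily large. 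Claw-freeness and $\delta\ge 6$ around $w$ give no handle on that global structure, and nothing in your sketch (Gallai--Edmonds, Theorem~\ref{PP}) is developed far enough to supply one. You have stated a plan whose decisive step is an acknowledged open obstacle, not a proof.

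For comparison, the paper never tries to exhibit a removable edge. It applies the Anunchuen--Caccetta characterization (Theorem~\ref{AC1994}) to \emph{every} edge $e$, takes a smallest witness set $S_e$, and proves the hard structural fact (Lemma~\ref{lem1}) that under $\delta\ge 6$ and claw-freeness one must have $|M(S_e)|=2$ and $|S_e|\in\{4,5\}$ with a very specific configuration in each case; edges are then classified into two types, each vertex is shown to lie on at most two type-$2$ edges, and every type-$1$ edge yields a $5$-vertex cut with ``Property $P$''. The contradiction comes from a global extremal argument: choose such a cut whose odd component containing $u$ is smallest, produce a second cut from an edge $uu_1$ inside it, and overlay the two partitions, using parity counts and claw-freeness (Claims~\ref{claim1}--\ref{claim5} and the case analysis) to rule out every configuration. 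That machinery --- the Tutte-type witness sets, the bound $|S_e|\le 5$, and the two-cut overlay --- is what your proposal would need to invent to close the gap.
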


Some preliminaries are presented in Section 2.
The proof of Theorem \ref{main} will be given in Section 3.

\section{Preliminaries}

We begin with some notations.
For a vertex $u$ of a graph $G$,
let $N_{G}(u)$ (or simply $N(u)$) denote the set of neighbors of vertex $u$ in $G$.
Denote by $G-uv$ the graph obtained from $G$ by deleting the edge $uv$
if $uv \in E(G)$.
Similarly, $G+uv$ stands for the graph obtained from $G$ by adding an
edge $uv$ if $uv \notin E(G)$.
For $X, Y \subseteq V(G)$, by $E_{G}(X, Y)$ (or simply $E(X, Y)$) we mean the set of edges
of $G$ with one end-vertex in $X$ and the other end-vertex in $Y$;
by $G[X]$ we mean the subgraph of $G$ induced by $X$;
by $M_G(X)$ (or simply $M(X)$) we mean a maximum matching in $G[X]$.
An {\em independent set} of $G$ is a set of pairwise nonadjacent vertices in $G$.
A {\em complete graph} is a graph in which any two vertices are adjacent.

Let $G$ be a connected graph and $k$ be a positive integer
with $k<|V(G)|$. A {\em $k$-vertex cut} of $G$ is
a subset of $V(G)$ with cardinality $k$ whose removal disconnects $G$,
and $G$ is {\em $k$-connected}
if $G$ contains no vertex cut with cardinality less than $k$.
Similarly, $G$ is said to be {\em $k$-edge-connected}
if the deletion of less than $k$ edges from $G$ does not disconnect it.
We call a graph $G$ {\em trivial} if $|V(G)|=1$,
and {\em nontrivial} otherwise.



The following characterization of Tutte's type of minimal $k$-extendable
graphs due to Anunchuen and Caccetta \cite{AC1994} will be useful
in the proof of Theorem \ref{main}.
As usual, let $c_{o}(G)$ denote the number of odd components of a graph $G$.

\begin{thm}[\cite{AC1994}]\label{AC1994}
Let $G$ be a $k$-extendable graph of order $2n$
and $1 \leq k \leq n-1$.
Then $G$ is minimal if and only if for every edge $e=uv \in E(G)$,
there exists $S_{e} \subseteq V(G)\backslash \{u, v\}$
such that the following statements hold$:$

{\rm (\romannumeral1)} $|M(S_{e})| \geq k;$

{\rm (\romannumeral2)} $c_{o}(G_{e})=|S_{e}|-2k+2$, where $G_e=G-e-S_e;$

{\rm (\romannumeral3)} $u$ and $v$ belong to two odd components of $G_{e}$.
\end{thm}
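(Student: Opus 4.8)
The plan is to reduce both directions to a Tutte-type characterization of $k$-extendability and then to track how the odd-component count $c_{o}$ behaves under deletion of a single edge. Concretely, I would first establish the following lemma: for a connected graph $H$ of order $2n$ with $n\ge k+1$, the graph $H$ is $k$-extendable if and only if $c_{o}(H-S)\le |S|-2k$ holds for every $S\subseteq V(H)$ with $|M(S)|\ge k$; call this condition $(\mathrm C)$. To prove it, fix a $k$-matching $M_{0}$ with vertex set $S_{0}$; by definition $k$-extendability requires $H-S_{0}$ to have a perfect matching, which by Tutte's theorem means $c_{o}(H-S_{0}-T)\le |T|$ for all $T\subseteq V(H)\setminus S_{0}$. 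Writing $S=S_{0}\cup T$, so that $|M(S)|\ge k$ and $|S|-2k=|T|$, turns this into $(\mathrm C)$; conversely every $S$ with $|M(S)|\ge k$ arises this way from a $k$-matching it contains, and taking $T=\varnothing$ recovers a perfect matching of $H$. Since $G$ is $(k+1)$-connected by Theorem~\ref{P}, each $G-e$ is still connected of order $2n$, so the contrapositive of the lemma applies to $G-e$: the graph $G-e$ fails to be $k$-extendable exactly when some $S$ has $|M_{G-e}(S)|\ge k$ yet $c_{o}((G-e)-S)\ge |S|-2k+2$.

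For the necessity direction I would assume $G$ is minimal and fix an edge $e=uv$; then $G-e$ is not $k$-extendable, so the lemma supplies a bad set $S$ with $|M_{G-e}(S)|\ge k$ and $c_{o}((G-e)-S)\ge|S|-2k+2$. Applying $(\mathrm C)$ to $G$ itself, which is legitimate because $|M_{G}(S)|\ge |M_{G-e}(S)|\ge k$, gives $c_{o}(G-S)\le|S|-2k$. I would then compare $G-S$ with $(G-e)-S$: if either endpoint of $e$ lay in $S$ the two graphs would coincide, contradicting the jump in $c_{o}$, so $u,v\notin S$. Hence $e$ is an edge inside a single component $C$ of $G-S$, and deleting it raises $c_{o}$ by at most $2$, doing so only when $C$ is even and splits into two odd pieces $C_{1}\ni u$ and $C_{2}\ni v$. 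Squeezing $c_{o}(G-S)\le|S|-2k$ and $c_{o}((G-e)-S)\ge|S|-2k+2$ against this bound forces equality throughout, so $S_{e}:=S$ satisfies conditions (\romannumeral1)--(\romannumeral3), noting $|M(S_{e})|=|M_{G-e}(S_{e})|\ge k$ since $u,v\notin S_{e}$.

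For the sufficiency direction I would assume each edge $e=uv$ admits a set $S_{e}$ as described and show $G-e$ is not $k$-extendable, which, as $G$ is already $k$-extendable, is precisely minimality. Here $u,v\notin S_{e}$ gives $M_{G-e}(S_{e})=M_{G}(S_{e})$, so $|M_{G-e}(S_{e})|\ge k$ by (\romannumeral1), while (\romannumeral2) gives $c_{o}((G-e)-S_{e})=|S_{e}|-2k+2>|S_{e}|-2k$ (condition (\romannumeral3) guarantees that the two odd components containing $u$ and $v$ are genuinely present). By the contrapositive of the lemma, $G-e$ is not $k$-extendable.

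I expect the main obstacle to be the characterization lemma: pinning down the exact condition $(\mathrm C)$ and verifying that passing between a bad set $S$ and the $k$-matching it contains is reversible, together with the parity observation that $c_{o}(H-S)$ and $|S|-2k$ always have the same parity, so that a failure of $(\mathrm C)$ is necessarily a jump of at least $2$. Once $(\mathrm C)$ is in hand, the edge-deletion bookkeeping in both directions is routine, the only care being the case analysis on whether deleting $e$ disconnects its component of $G-S$ and on the parities of the resulting pieces.
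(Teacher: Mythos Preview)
The paper does not supply its own proof of this statement: Theorem~\ref{AC1994} is quoted from Anunchuen and Caccetta's 1994 paper and used as a black box throughout. There is therefore no in-paper argument to compare your proposal against.

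That said, your plan is sound and is essentially the standard route to this characterization. The Tutte-type lemma you propose---$H$ is $k$-extendable if and only if $c_{o}(H-S)\le |S|-2k$ for every $S$ with $|M(S)|\ge k$---is exactly the right intermediate result, and your derivation of it from Tutte's theorem via $S=S_{0}\cup T$ is correct in both directions. The necessity argument is clean: the parity remark forces the jump to be exactly $2$, the case $u\in S$ or $v\in S$ is ruled out because then $(G-e)-S=G-S$, and the only way a single edge deletion can raise $c_{o}$ by $2$ is the even-component-splits-into-two-odd-components scenario, which pins down (\romannumeral2) and (\romannumeral3). Sufficiency is immediate from the contrapositive of the lemma, and your observation that $u,v\notin S_{e}$ makes $M_{G}(S_{e})=M_{G-e}(S_{e})$ handles (\romannumeral1). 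The appeal to Theorem~\ref{P} to ensure $G-e$ is connected is the one external ingredient you need, and you invoke it correctly. I see no genuine gap.
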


Next we present some useful lemmas. Let $G$ be a graph with a perfect matching.
If $G$ has a matching $M$ with size at most $k$ and $G-V(M)$ has an odd component,
then there exists a $k$-matching containing $M$ and the $k$-matching
is not contained in any perfect matching of $G$.
For convenience, we state it as a lemma.

\begin{lem}\label{next1}
Let $G$ be a graph with a perfect matching and
$M$ be a matching of $G$ with size at most $k$.
If $G-V(M)$ has an odd component, then $G$ is not $k$-extendable.
\end{lem}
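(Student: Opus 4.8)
The plan is to argue by contradiction: suppose $G$ is $k$-extendable, and deduce that $G-V(M)$ cannot have an odd component, contradicting the hypothesis. Set $m=|M|$, so $1\le m\le k$. (If $m=0$ the hypotheses are already inconsistent, since then $G-V(M)=G$ would have an odd component while $G$ has a perfect matching, and a graph with a perfect matching has no odd component; so I may assume $m\ge 1$.)

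The key observation is that one should \emph{not} try to enlarge $M$ to a full $k$-matching and then obstruct that larger matching: such an enlargement need not exist, because the components of $G-V(M)$ can have very small maximum matchings, so that the largest matching containing $M$ may have size strictly below $k$. Instead I would invoke the downward closure of extendability supplied by Theorem~\ref{P}. Applying it repeatedly shows that a $k$-extendable graph is $(k-1)$-extendable, then $(k-2)$-extendable, and so on, hence $m$-extendable for every $m$ with $1\le m\le k$; each intermediate value stays in the admissible range $1\le j\le n-1$, so the induction is legitimate. Since $M$ is itself an $m$-matching and $G$ is $m$-extendable, $M$ is contained in some perfect matching $P$ of $G$.

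Now I would examine $P\setminus M$. Because $P$ is a matching and $M$ already saturates every vertex of $V(M)$, no edge of $P\setminus M$ can meet $V(M)$; thus the edges of $P\setminus M$ lie entirely in $G-V(M)$ and cover exactly the vertices of $V(G)\setminus V(M)$. Hence $P\setminus M$ is a perfect matching of $G-V(M)$. A graph admitting a perfect matching has only even components, since the matching restricts to a perfect matching on each component. This contradicts the assumption that $G-V(M)$ has an odd component, and therefore $G$ is not $k$-extendable.

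The only point requiring care—and the step I would flag as the crux—is recognizing that the naive route suggested by the informal phrasing preceding the statement (first grow $M$ to a $k$-matching, then show it is unextendable) can fail outright when $M$ cannot be grown to size $k$ inside $G-V(M)$. Routing the argument through Theorem~\ref{P} to obtain $m$-extendability, and then extending $M$ all the way to a perfect matching in one stroke, sidesteps this obstacle entirely. The remaining verifications—that $P\setminus M$ is genuinely a perfect matching of $G-V(M)$, and that a perfect matching forces every component to be even—are routine.
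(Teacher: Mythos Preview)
Your proof is correct. The paper does not give a formal proof of this lemma; it merely precedes the statement with the informal assertion that one can enlarge $M$ to a $k$-matching which is then not contained in any perfect matching. As you rightly observe, that enlargement step can fail outright --- for instance when $G-V(M)$ is an independent set, so that no matching of $G$ containing $M$ has size exceeding $|M|$ --- and hence the paper's one-line justification is incomplete as written. Your detour through Theorem~\ref{P}, descending from $k$-extendability to $m$-extendability with $m=|M|$ and then extending $M$ directly to a perfect matching, closes this gap cleanly. The paper's intended route can be salvaged by the same appeal to Theorem~\ref{P} (once $M$ lies in a perfect matching $P$, any $k$ edges of $P$ containing $M$ furnish the desired $k$-matching), but that is essentially your argument with an unnecessary intermediate step.
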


\begin{lem}\label{next2}
Let $G$ be a graph with a perfect matching and
$X$ be a $(2k-1)$-vertex cut of $G$ such that $|M(X)|=k-1$, $k\geq 2$.
Let $H_1$ and $H_2$ be two components of $G-X$ and
$X\backslash V(M(X))=\{x\}$.
If $N(x)\cap V(H_i)\neq \emptyset$ for $i=1, 2$,
then $G$ is not $k$-extendable.
\end{lem}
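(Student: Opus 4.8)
Looking at Lemma~\ref{next2}, I need to prove that if $X$ is a $(2k-1)$-vertex cut with $|M(X)| = k-1$, the unique unmatched vertex $x \in X$ has neighbours in two components $H_1, H_2$ of $G-X$, then $G$ is not $k$-extendable.

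\medskip

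\noindent\textbf{Proof proposal.}

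The plan is to exhibit a specific $k$-matching that cannot be extended to a perfect matching, invoking Lemma~\ref{next1}. First I would fix a maximum matching $M(X)$ of $G[X]$; by hypothesis it has size $k-1$ and saturates exactly $2k-2$ of the $2k-1$ vertices of $X$, leaving the single vertex $x$ unsaturated. Pick $y_1 \in N(x) \cap V(H_1)$ and form the $k$-matching $M := M(X) \cup \{xy_1\}$. I claim $G - V(M)$ has an odd component, which by Lemma~\ref{next1} shows $G$ is not $k$-extendable. The vertex set $V(M)$ consists of the $2k-2$ saturated vertices of $X$ together with $x$ and $y_1$; in particular $V(M) \supseteq X$, so every component of $G - V(M)$ is contained in a single component $H_i$ of $G - X$. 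The component $H_2$ is untouched by $V(M)$ except possibly not at all — since $y_1 \in V(H_1)$ and $x \in X$, we have $V(H_2) \cap V(M) = \emptyset$, so $H_2$ itself is a component of $G - V(M)$.

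\medskip

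The key step is a parity count. Since $G$ has a perfect matching, $|V(G)|$ is even. Write $G - X$ as the disjoint union of its components; a standard consequence of the Tutte--Berge structure (or directly: an odd component of $G-X$ must send a matching edge into $X$ under any perfect matching, and $|X| = 2k-1$ is odd) is that $c_o(G-X)$ has the same parity as $|X|$, hence $c_o(G - X)$ is odd and in particular $G - X$ has at least one odd component. I want to pin down that $H_2$ — or more precisely some component among those disjoint from $V(M)$ — is odd. Here I would argue more carefully: among all components of $G-X$, exactly two, $H_1$ and $H_2$, are named; the remaining components together with $H_1$ lose vertices to $V(M)$, but $H_2$ loses none. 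If $H_2$ happens to be even, I instead look at $H_1 - \{y_1\}$: removing one vertex from $H_1$ and then passing to $G - V(M)$, the components of $H_1 - (V(M)\cap V(H_1))$ together with $H_2$ and all other components of $G-X$ must have a total vertex count of the correct parity. The clean way: $|V(G - V(M))| = |V(G)| - 2k$ is even, and $G - V(M)$ is the disjoint union of $H_2$, the other components of $G-X$ untouched by $M$, and the components of $H_1 - y_1$. If all of these were even, their union would be even — consistent — so a raw parity count is not enough, and I must use the structure of where $M(X)$ lands.

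\medskip

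The real obstacle, and where I expect to spend the effort, is handling the case when $H_2$ is even. The resolution: since $|M(X)| = k-1 < k$ but $x$ is the only unsaturated vertex, every vertex of $X \setminus \{x\}$ is matched \emph{inside} $X$ by $M(X)$. Now consider instead the matching $M' := M(X) \cup \{x y_2\}$ for $y_2 \in N(x) \cap V(H_2)$. By the symmetric argument, $H_1$ is a component of $G - V(M')$ disjoint from $V(M')$. If both $H_1$ and $H_2$ were even, then since $G$ has a perfect matching and removing the even set $X \cup \{x\}$-hm, $|X|$ is odd — so $G - X$ has odd order, forcing an odd number of odd components among $H_1, H_2, \dots$; if $H_1, H_2$ are both even, some \emph{other} component $H_3$ of $G-X$ is odd, and $H_3$ is disjoint from $V(M)$ (as $M$ only meets $X$, $x$, and $H_1$), giving the odd component we need. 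If exactly one of $H_1, H_2$ is odd, say $H_2$, use $M = M(X) \cup \{xy_1\}$ so that $H_2 \subseteq G - V(M)$ is the odd component; if $H_1$ is the odd one, use $M' = M(X) \cup \{xy_2\}$ symmetrically. In every case Lemma~\ref{next1} applies. I would present this as a short case split on the parities of $H_1$ and $H_2$, using $N(x) \cap V(H_i) \neq \emptyset$ precisely to build the matching that keeps the odd component intact. \qed
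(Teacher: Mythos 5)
Your proposal is correct in substance and runs on the same engine as the paper: exhibit a $k$-matching of the form $M(X)\cup\{xy_i\}$ and invoke Lemma~\ref{next1}. The only real divergence is how the awkward subcase is settled. The paper cases solely on the parity of $|V(H_1)|$: if $H_1$ is odd, it matches $x$ into $H_2$ and keeps $H_1$ intact; if $H_1$ is even, it matches $x$ to a neighbour $x_1\in V(H_1)$, and then $H_1-x_1$ has odd order, so $G-V(M)$ has an odd component inside $H_1$. This is exactly the vertex-deletion trick you contemplated and then abandoned; it keeps the argument purely local and never needs the existence of any further component of $G-X$. Your route instead resolves the both-even subcase globally: since $G$ has a perfect matching, $|V(G)|$ is even, while $|X|=2k-1$ is odd, so $G-X$ has odd order and hence an odd component $H_3$ distinct from $H_1$ and $H_2$, which is untouched by $M(X)\cup\{xy_1\}$. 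That is perfectly valid, and arguably cleaner in that every odd component you use is a whole component of $G-X$. Two small repairs: your explicit case split ("both even" / "exactly one odd") omits the case that $H_1$ and $H_2$ are both odd, which is handled by the same construction (e.g.\ $M(X)\cup\{xy_1\}$ leaves $H_2$ as an untouched odd component), so state it; and if you ever do fall back on the deletion trick, conclude with "some component of $H_1-y_1$ is odd" rather than treating $H_1-y_1$ as a single component, since it need not be connected (the paper's wording is slightly loose on this same point).
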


\begin{proof}
Let $x_i\in N(x)\cap V(H_i)$, $i=1, 2$.
If $|V(H_1)|$ is odd, then $M(X)\cup \{xx_2\}$ is a $k$-matching of $G$
and the removal of it results in an odd component $H_1$.
If $|V(H_1)|$ is even, then $M(X)\cup \{xx_1\}$ is a $k$-matching
and the removal of it results in an odd component $H_1-x_1$.
In both alternatives, $G$ is not $k$-extendable by Lemma \ref{next1}.
\end{proof}

\begin{lem}\label{next3}
Let $G$ be a $k$-extendable graph and $X$ be a vertex cut of $G$.
If $G[X]$ has a perfect matching $M$, then $|M|\geq k$.
Moreover, if $|M|=k$, then every component of $G-X$ is even and every vertex in $X$
has a neighbor in each even component of $G-X$.
\end{lem}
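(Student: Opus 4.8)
The plan is to argue by contradiction in both parts, always reducing to Lemma \ref{next1}: to conclude that $G$ is not $k$-extendable it suffices to exhibit a matching of size at most $k$ whose deletion of endpoints leaves a component of odd order. Throughout I will use that a $k$-extendable graph is $(k+1)$-connected (Theorem \ref{P}), so that every vertex cut of $G$ has at least $k+1$ vertices.

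For the inequality $|M|\ge k$, suppose instead $|M|=m\le k-1$, and let $H_1,\dots,H_t$ with $t\ge 2$ be the components of $G-X$. If some $H_i$ is odd we are done at once, since $M$ has size $\le k$ and $V(M)=X$, so assume every $H_i$ is even. Put $X_i=N(V(H_i))\cap X$; each $X_i$ is a vertex cut separating $H_i$ from the rest of $G-X$, so $|X_i|\ge k+1$, and therefore $|X_1|+|X_2|\ge 2k+2>2m=|X|$, giving a vertex $x\in X_1\cap X_2$. Fix the edge $xy\in M$ at $x$ and neighbours $h_1\in V(H_1)$, $h_2\in V(H_2)$ of $x$. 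The idea is to re-route $xy$ so that all of $X$ becomes covered while one $H_i$ is split off as a set of odd order, which is then automatically a union of components because $X$ has been entirely removed. Concretely: if $y$ has a neighbour $w$ in some component other than $H_1$, use $(M\setminus\{xy\})\cup\{xh_1,yw\}$ (leaving $H_1-h_1$); if every neighbour of $y$ outside $X$ lies in $H_1$, use $(M\setminus\{xy\})\cup\{xh_2,yz\}$ for $z\in N(y)\cap V(H_1)$ (leaving $H_1-z$); and if $y$ has no neighbour outside $X$, use $(M\setminus\{xy\})\cup\{xh_1\}$, which isolates $y$. In each case Lemma \ref{next1} is contradicted, so $m\ge k$.

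Now assume $|M|=k$, so $V(M)=X$ and $G-V(M)=G-X$. Since $M$ is then a $k$-matching, $G-X$ cannot have an odd component, which is the first part of the ``moreover'' statement. For the second part, suppose some $v\in X$ has no neighbour in a component $H$ of $G-X$, and set $Y=N(V(H))\cap X$; then $Y$ is a vertex cut, so $|Y|\ge k+1$, $v\notin Y$, and $|X\setminus Y|\le k-1$. If $M$ has an edge $ab$ with $a\in Y$ and $b\notin Y$, then re-routing $a$ into $H$ via $(M\setminus\{ab\})\cup\{ah\}$ with $h\in N(a)\cap V(H)$ deletes all of $Y$ (and keeps $b$, which anyway has no neighbour in $H$), so the odd-order set $H-h$ survives as a union of components — a contradiction. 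Otherwise $M$ restricts to a perfect matching of $Y$ and of $X\setminus Y$ separately; in that case, if some $w\in Y$ has a neighbour $z$ in a component $H'\ne H$, then with $ww'\in M$ and $uv\in M$ the $M$-edge at $v$ (so that $u,v\in X\setminus Y$), the matching $(M\setminus\{ww',uv\})\cup\{w'h',wz\}$ with $h'\in N(w')\cap V(H)$ again deletes all of $Y$ while $u$ and $v$ remain, and these have no neighbour in $H$, so once more $H-h'$ is separated and contains an odd component.

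The remaining configuration is $|M|=k$, $M$ splitting over $Y$ and $X\setminus Y$, and no vertex of $Y$ adjacent to any vertex outside $X\cup V(H)$; here no re-routing is available, and instead I would observe that there are then no edges at all between $V(H)\cup Y$ and the union of the other components, so $X\setminus Y$ is itself a vertex cut with $|X\setminus Y|\le k-1<k+1$, contradicting $(k+1)$-connectivity. I expect this last case to be the main obstacle, as it is the only point where the proof departs from the pattern ``re-route an edge of $M$ into $H$, then apply Lemma \ref{next1}''; the rest is the routine bookkeeping of checking that each displayed edge set is indeed a matching of size at most $k$ and that the odd piece is genuinely a union of components once $X$, or the separator $Y$, has been deleted.
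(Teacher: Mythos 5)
Your proof is correct and follows essentially the same strategy as the paper's: use $(k+1)$-connectivity to re-route edges of $M$ into the components of $G-X$ and reduce each case to Lemma \ref{next1} via an odd leftover piece. The differences are only organizational --- you pivot on a single vertex of $X$ with neighbours in two components (and, in the ``moreover'' part, on the cut $Y=N(V(H))\cap X$) where the paper locates a crossing edge of $M$, and your explicit case analysis spells out what the paper compresses into ``by similar discussions above.''
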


\begin{proof}
Suppose to the contrary that $|M|<k$.
If $|M|=1$, then $k\geq 2$ and $|X|=2$. Thus $X$ is a 2-vertex cut of $G$,
contradicting that $G$ is $(k+1)$-connected. So $|M|\geq 2$.
Let $H_1$ be a component of $G-X$.
If $|V(H_1)|$ is odd, then $G-V(M)$ has an odd component $H_1$.
By Lemma \ref{next1},
$G$ is not $k$-extendable, a contradiction. So $|V(H_1)|$ is even.

Take any edge $x_1y_1$ in $M$.
Suppose that one of $x_1$ and $y_1$ has a neighbor in $H_1$.
Without loss of generality,
let $x_{1}'\in N(x_1)\cap V(H_1)$ and $N(y_1)\cap V(H_1)=\emptyset$.
Then $(M\backslash \{x_1y_1\})\cup \{x_1x_{1}'\}$
is a matching with size at most $k-1$ and the removal of it results in an odd component
$G[V(H_1)\backslash \{x_{1}'\}]$. By Lemma \ref{next1}, $G$ is not $k$-extendable, a contradiction.
Thus both $x_1$ and $y_1$ have neighbors in $H_1$
or none of them have neighbors in $H_1$.

Since $G$ is $(k+1)$-connected and $|X|<2k$,
there exists $x_2y_2\in M\backslash \{x_1y_1\}$ such that $x_2$
(resp. $y_2$) has neighbors in both $H_1$ and $G-X-V(H_1)$.
Let $x_{2}'\in N(x_2)\cap V(H_1)$ and $y_{2}'\in N(y_2)\cap V(G-X-V(H_1))$.
Then $(M\backslash \{x_2y_2\})\cup \{x_2x_{2}', y_2y_{2}'\}$
is a matching with size at most $k$ and the removal of it results in
an odd component $G[V(H_1)\backslash \{x_{2}'\}]$. By Lemma \ref{next1}, $G$ is not $k$-extendable,
a contradiction. So $|M|\geq k$.

If $|M|=k$, then $G-V(M)$ has a perfect matching as $G$ is $k$-extendable.
Thus every component of $G-V(M)$ is even. By similar discussions above,
the two end-vertices of every edge in $M$ have neighbors in
each even component of $G-V(M)$.
\end{proof}

\begin{lem}\label{next4}
Let $G$ be a minimal $k$-extendable graph.
For any edge $e=uv\in E(G)$, let $S_e$, $M(S_e)$ and $G_e$
be defined in Theorem $\ref{AC1994}$.
Let $M\subseteq M(S_e)$ and $|M|=k$. Then every perfect matching
of $G-V(M)$ contains $e$. Moreover, if $|S_e|\geq 2k+1$,
then every vertex in $S_e\backslash V(M)$ has a neighbor in some
odd component of $G_e$ other than the odd components containing $u$ and $v$.
\end{lem}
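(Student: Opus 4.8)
The plan is to prove the two assertions of Lemma~\ref{next4} separately, both times exploiting the structural properties of $S_e$, $M(S_e)$ and $G_e=G-e-S_e$ guaranteed by Theorem~\ref{AC1994}, together with Lemma~\ref{next1}.

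\textbf{First assertion.} Since $M\subseteq M(S_e)$ is a $k$-matching of $G-e$, and hence of $G$, the $k$-extendability of $G$ gives a perfect matching $F$ of $G$ with $M\subseteq F$; so $G-V(M)$ has a perfect matching, namely $F\setminus M$. Now suppose, for contradiction, that some perfect matching $F'$ of $G-V(M)$ avoids $e=uv$. Consider the subgraph $G_e=G-e-S_e$. By Theorem~\ref{AC1994}(iii), $u$ and $v$ lie in two \emph{distinct} odd components, say $C_u$ and $C_v$, of $G_e$; and by (ii) the number of odd components of $G_e$ is exactly $|S_e|-2k+2$. The key counting step: the matching $F'$ is a perfect matching of $G-V(M)$ not using $e$, so inside $G-V(M)$ every vertex of $C_u$ (including $u$) and of $C_v$ (including $v$) must be matched by $F'$. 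The vertices of $S_e\setminus V(M)$ available to be matched to vertices outside the components of $G_e$ number $|S_e|-2k$ (we have used $2k$ vertices of $S_e$ for $V(M)$, since $M(S_e)$ saturates $2k$ vertices and $M$ uses $k$ of its edges — here one must be slightly careful and note $M\subseteq M(S_e)$ forces $V(M)\subseteq S_e$). These $|S_e|-2k$ vertices can "absorb" at most $|S_e|-2k$ odd components of $G_e$ by sending one matching edge into each; but there are $|S_e|-2k+2$ odd components, and $e$ is forbidden, so at least two odd components must be matched entirely within themselves — impossible since they are odd. This yields the contradiction, so every perfect matching of $G-V(M)$ contains $e$.

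\textbf{Second assertion.} Assume $|S_e|\ge 2k+1$ and suppose some $x\in S_e\setminus V(M)$ has no neighbor in any odd component of $G_e$ other than $C_u$ and $C_v$. The idea is to build a $k$-matching of $G$ not extendable to a perfect matching, contradicting $k$-extendability via Lemma~\ref{next1}. Replace in $M$ (really in the argument, reselect within $M(S_e)$) so that $x$ becomes exposed — which is already the case, $x\in S_e\setminus V(M)$ — and then we have the $k$-matching $M$ with $G-V(M)$ possessing a perfect matching. Instead, swap one edge of $M$ for an edge from $x$ to a vertex of some odd component $C\notin\{C_u,C_v\}$: but $x$ has no such neighbor by assumption, so we cannot, and $x$'s only edges leaving $G_e$'s odd components go to $C_u$, $C_v$, the even components of $G_e$, or to $S_e$. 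We then count: after removing $V(M)$, the odd components of $G_e$ distinct from $C_u,C_v$ number $|S_e|-2k$, and they can only be saturated through the $|S_e|-2k$ vertices of $S_e\setminus V(M)$ other than... no, there are $|S_e|-2k$ such vertices total. If $x$ is useless for saturating these $|S_e|-2k$ odd components, then the remaining $|S_e|-2k-1$ vertices of $S_e\setminus(V(M)\cup\{x\})$ must saturate $|S_e|-2k$ odd components, which is impossible by a parity/counting argument exactly as above. Hence $G-V(M)$ has an odd component after we also delete $x$ (or rather, we produce the contradiction that $G-V(M)$ cannot be perfectly matched), contradicting the first assertion or $k$-extendability directly.

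\textbf{Main obstacle.} The delicate point throughout is the bookkeeping of which vertices of $S_e$ are "spent" on $V(M)$ versus "available" to be matched into the odd components of $G_e$, and the clean deduction of the inequality "number of available $S_e$-vertices $\ge$ number of odd components minus (the contributions of forced internal edges)". In particular, for the first assertion one needs $V(M)\subseteq S_e$ (immediate from $M\subseteq M(S_e)$) and must handle the possibility that some odd components of $G_e$ are matched to each other — but $G_e$ has no edges between its own components, so any matching edge leaving an odd component must land in $S_e\setminus V(M)$ or be $e$ itself; this is what pins the count. For the second assertion the obstacle is to make precise that excluding $x$ from the pool of "absorbing" vertices drops the capacity below the demand; one must check that no other mechanism (even components, the edge $e$) can compensate, using again that only $u,v$ connect across $e$ and only $C_u,C_v$ are the odd components touching $e$. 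Once these counting lemmas are stated cleanly, both parts follow by invoking Lemma~\ref{next1}.
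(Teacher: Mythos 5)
Your proposal is correct and follows essentially the same route as the paper: both rest on the tight count $|S_e\setminus V(M)|=|S_e|-2k=c_o(G_e)-2$ from Theorem \ref{AC1994}, the fact that $e$ is the only edge joining distinct components of $G_e$, and the observation that each odd component needs a matching edge leaving it, which forces $e$ into every perfect matching of $G-V(M)$ and forces each vertex of $S_e\setminus V(M)$ to be matched into a distinct odd component other than those of $u$ and $v$. The paper states this directly for an arbitrary perfect matching of $G-V(M)$, whereas you phrase both parts as contradictions, but the counting is identical.
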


\begin{proof}
Let $M'$ be a perfect matching of $G-V(M)$.
By Theorem \ref{AC1994},
$|S_e\backslash V(M)|=|S_e|-2k=c_{o}(G_{e})-2$
and $e$ joins two odd components of $G_e$.
Then $e \in M'$ and every vertex in $S_e\backslash V(M)$
(if $|S_e|\geq 2k+1$) is matched to a vertex
in some odd component of $G_e$ other than the odd components
containing $u$ and $v$. The result follows.
\end{proof}

\section{Proof of Theorem \ref{main}}

In this section, we prove our main theorem.
We first present an useful lemma.

\begin{lem}\label{lem1}
Let $G$ be a minimal $2$-extendable claw-free graph with $\delta(G)\geq 6$.
For any edge $e=uv \in E(G)$, let $S_e$, $M(S_e)$ and $G_e$ be defined in Theorem $\ref{AC1994}$
and $S_e$ be a smallest set satisfying the conclusions of Theorem $\ref{AC1994}$
when Theorem $\ref{AC1994}$ is applied to the edge $e$. Let $O_1, O_2, \ldots, O_t$ be
the odd components of $G_e$ such that $u \in V(O_1)$ and $v \in V(O_2)$.
Then $|M(S_e)|=2$ and one of the following statements holds$:$

{\rm (\romannumeral1)} $|S_e|=4$, $t=2$ and $|V(O_i)|>1$ for $i=1,2;$

{\rm (\romannumeral2)} $|S_e|=5$, $t=3$, $|V(O_i)|=1$ for $i=1,2$
and $G_e$ has no even component$;$
moreover, for any vertex $w\in S_e\backslash V(M(S_e))$,
$\{uw, vw\}\subseteq E(G)$ and $N(w)\cap V(O_3)\neq \emptyset$.
\end{lem}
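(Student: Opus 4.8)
The plan is to work with the smallest $S_e$ supplied by Theorem~\ref{AC1994} and to combine three inputs: the Tutte-type equality $c_o(G_e)=|S_e|-2$, claw-freeness, and the hypothesis $\delta(G)\ge 6$ together with $3$-connectivity (Theorem~\ref{P}). A recurring observation is that $O_1,\dots,O_t$ and the even components are the components of $G-e-S_e$, so $e=uv$ is the only edge of $G$ joining two distinct components of $G_e$; equivalently, if $x,y$ lie in distinct components of $G_e$ and $xy\in E(G)$, then $\{x,y\}=\{u,v\}$. From Theorem~\ref{AC1994} we get $c_o(G_e)=|S_e|-2\ge t\ge2$, hence $|S_e|\ge4$ and $|M(S_e)|\ge2$; since $|M(S_e)|\le\lfloor|S_e|/2\rfloor$, once $|S_e|\le5$ is established we automatically obtain $|M(S_e)|=2$.

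The heart of the argument is a structural statement about the vertices of $S_e$ not covered by a maximum matching of $G[S_e]$. For $w\in S_e$ with $|M(S_e\setminus\{w\})|\ge2$, I would test whether $S_e\setminus\{w\}$ still satisfies the conclusions of Theorem~\ref{AC1994}; if it does, the minimality of $S_e$ is contradicted. Adding $w$ back merges $w$ with the components of $G_e$ it meets, and a parity count shows that $c_o(G-e-(S_e\setminus\{w\}))=|S_e|-3$ precisely when $w$ has neighbours in one or two odd components of $G_e$, while condition (iii) then survives unless that single odd component is $O_1$ or $O_2$ (resp.\ unless the two odd components are exactly $O_1,O_2$). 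So minimality forces every such $w$ to have neighbours in at least three odd components, modulo these exceptions. Lemma~\ref{next4} (applied with a maximum matching of $G[S_e]$ avoiding $w$, which needs $|S_e|\ge5$) rules the exceptions out for any $w$ that is avoided by some maximum matching of $G[S_e]$: such a $w$ has a neighbour in an odd component of index $\ge3$, and hence neighbours in at least three odd components. Now claw-freeness takes over: picking one neighbour of $w$ in each component of $G_e$ it meets, any three of them are pairwise non-adjacent unless two of them are $u$ and $v$; this forces $w$ to meet exactly $O_1$, $O_2$ and a single $O_j$ with $j\ge3$, with $N(w)\cap V(O_1)=\{u\}$, $N(w)\cap V(O_2)=\{v\}$, and no neighbour of $w$ in any even component. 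In particular $uw,vw\in E(G)$.

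Given this, $|M(S_e)|=2$: if $|M(S_e)|\ge3$ then $|S_e|\ge6$, and varying the $2$-element submatching of $M(S_e)$ shows that \emph{every} vertex of $S_e$ is of the above type. Then $u$ is adjacent to all of $S_e$ and $N(w)\cap V(O_1)=\{u\}$ for all $w\in S_e$, so no vertex of $V(O_1)\setminus\{u\}$ has a neighbour outside $V(O_1)$; as $O_1$ is connected, $u$ would be a cut vertex unless $V(O_1)=\{u\}$, so $|V(O_1)|=|V(O_2)|=1$, and likewise $G_e$ has no even component (an even component of $G_e$ would be a component of $G$). Grouping $S_e$ by which $O_j$ ($j\ge3$) each vertex meets, $3$-connectivity forces each group to have size $\ge3$, so $|S_e|\le3(|S_e|-4)$, which with $|S_e|\ge6$ gives $|S_e|=6$, $|M(S_e)|=3$, two further odd components $O_3,O_4$, and a partition $S_e=A\cup B$ with $|A|=|B|=3$, the vertices of $A$ meeting $O_3$ and those of $B$ meeting $O_4$. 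To finish: either $A$ is an independent set, whence $A\subseteq N(u)$ contains an independent triple and $G$ has an induced claw at $u$; or $A$ has an edge $a_1a_2$, in which case the $2$-matching formed by $a_1a_2$ and the edge of the (perfect) matching $M(S_e)$ covering the third vertex of $A$ has vertex set containing $A$, so $O_3$—whose only neighbours outside $V(O_3)$ lie in $A$—is an odd component of $G-V(M)$, contradicting $2$-extendability by Lemma~\ref{next1}. Either way we reach a contradiction, so $|M(S_e)|=2$.

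Finally one treats $|M(S_e)|=2$ according to $|S_e|$. If $|S_e|=4$ then $t=2$ and there is no even component; if $|V(O_1)|=1$ then $N(u)\subseteq S_e\cup\{v\}$ forces $d_G(u)\le5<6$, so $|V(O_i)|>1$ for $i=1,2$, which is case~(i). If $|S_e|=5$, then $|M(S_e)|=2$ automatically; Lemma~\ref{next4} forces the unique uncovered vertex $w$ of $S_e$ to have a neighbour in an odd component of index $\ge3$, and since $c_o(G_e)=3$ this gives $t=3$ with no even component, while the adjacency claims for $w$ are exactly the structural statement above; and $|V(O_1)|=|V(O_2)|=1$ is obtained by a connectivity argument, showing that otherwise—examining $N(V(O_i))\cap S_e$, which has size $\ge2$ by $3$-connectivity—one can exhibit a $2$-matching whose deletion isolates an odd component (contradicting $2$-extendability via Lemma~\ref{next1}), or else $u$ is a cut vertex; this is case~(ii). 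The remaining possibility $|S_e|\ge6$ with $|M(S_e)|=2$ is eliminated along the lines of the $|M(S_e)|\ge3$ argument. The main obstacle is precisely that the vertices of $S_e$ lying in \emph{every} maximum matching of $G[S_e]$ (at most four of them) are reached neither by Lemma~\ref{next4} nor by the minimality of $S_e$; controlling their adjacencies—so as to still force $|V(O_1)|=|V(O_2)|=1$, the size bound on $S_e$, and the contradiction in the $|S_e|\ge6$ case, and to carry out the $|S_e|=5$ case—requires a careful combined use of claw-freeness at $u$, the degree bound $\delta(G)\ge6$, $3$-connectivity, and purpose-built non-extendable $2$-matchings.
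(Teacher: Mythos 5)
Your first half tracks the paper's own proof closely and is essentially sound: your structural statement about a vertex $w\in S_e$ missed by a $2$-matching of $G[S_e]$ (minimality of $S_e$ plus Lemma \ref{next4} plus claw-freeness forcing $w$ to meet exactly $O_1,O_2$ and one $O_j$ with $j\ge 3$, with $uw,vw\in E(G)$) is the paper's Claim \ref{m1}, and your elimination of $|M(S_e)|\ge 3$ is its Claim \ref{m3}; your grouping of $S_e$ by the component $O_j$ met is the same count the paper performs after contracting the odd components (the inequality should read $|S_e|\ge 3(|S_e|-4)$, and your ``$2$-matching covering $A$'' needs the small extra remark that the $M(S_e)$-edge at the third vertex of $A$ may itself have both ends in $A$, in which case pair it with the $M(S_e)$-edge at the remaining $A$-vertex, or delete $M(S_e)$ minus that edge to isolate $O_4$). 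The $|S_e|=4$ case is also handled exactly as in the paper.

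The genuine gaps sit precisely at the steps you wave at or concede. First, $|S_e|=6$ with $|M(S_e)|=2$ is not ``along the lines of'' the $|M(S_e)|\ge 3$ argument: with only two unmatched vertices there is no independent triple in $N(u)$, and the paper's Claim \ref{m6} needs a separate argument in which the two unmatched vertices meet $O_3$ and $O_4$, claw-freeness forces $O_1,O_2$ trivial, $\delta(G)\ge 6$ makes $u,v$ adjacent to all but at most one vertex of $S_e$, a replacement matching contradicts the minimality of $S_e$ whenever a matched vertex meets both $O_3$ and $O_4$, and only then do the independent sets $A,B$ produce a claw; you explicitly admit you cannot control the matched vertices, and that is exactly the missing content. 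Second, in the $|S_e|=5$ case ``$G_e$ has no even component'' does not follow from $c_o(G_e)=3$: the unmatched vertex $a_1$ has no neighbour in an even component $C$, but the four matched vertices may, and the paper excludes $C$ by applying Lemma \ref{next3} to the $4$-cut $V(M(S_e))$, then Claim \ref{m4} and claw-freeness to manufacture a claw at $y_1$. Third, and most seriously, $|V(O_1)|=|V(O_2)|=1$ when $|S_e|=5$ is the heart of the lemma, and your sketch (``exhibit a $2$-matching whose deletion isolates an odd component, or else $u$ is a cut vertex'') does not go through when both $O_1$ and $O_2$ are nontrivial: in that configuration $N(a_1)\cap V(O_1)=\{u\}$, $N(a_1)\cap V(O_2)=\{v\}$, $N(a_1)\cap S_e=\emptyset$, and no suitable $2$-matching is in sight, which is why the paper instead takes $a_2\in N(a_1)\cap V(O_3)$, applies Theorem \ref{AC1994} and the already-established facts (including statement (\romannumeral1) for $|S_{e_1}|=4$) to the auxiliary edge $e_1=a_1a_2$, and derives contradictions separately for $|S_{e_1}|=4$ and $|S_{e_1}|=5$. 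Nothing in your proposal anticipates this auxiliary-edge argument, so the case analysis that actually proves statement (\romannumeral2) is not supplied.
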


\begin{proof}
By Theorem \ref{AC1994}, $|M(S_e)|\geq 2$ and $t=|S_e|-2$.
We begin the proof by making the following claim.

\begin{claim}\label{m1}
Let $M\subseteq M(S_e)$ and $|M|=2$. If $|S_e|\geq 5$,
then, for any vertex $w\in S_e\backslash V(M)$,
$w$ satisfies the property that
exactly three odd components of $G_e$ contain vertices in $N(w)$,
$\{uw, vw\}\subseteq E(G)$ and $N(w)\cap V(O_i)\neq \emptyset$ for some $i\geq 3$.
In particular, if $|M(S_e)|\geq 3$, then every vertex in $S_e$
satisfies the property.
\end{claim}

\noindent Proof of Claim \ref{m1}.
By Lemma \ref{next4}, $N(w)\cap V(O_i)\neq \emptyset$ for some $i\geq 3$.
We next show that exactly three odd components of $G_e$ contain vertices
in $N(w)$ and $\{uw, vw\}\subseteq E(G)$.

Suppose that $w$ has neighbors in at most two odd components of $G_e$.
Since $N(w)\cap V(O_i)\neq \emptyset$ for some $i\geq 3$,
at most one of $O_1$ and $O_2$ contains vertices in $N(w)$.
Let $S_e'=S_e \backslash \{w\}$. Adding $w$ back to the graph $G_e$ either
turns an odd component into an even component (if $w$ has neighbors in
one odd component) or merges two odd components into an
odd component (if $w$ has neighbors in two odd components).
Thus $c_o(G-e-S_e')=c_o(G-e-S_e)-1=|S_e|-2-1=|S_e'|-2$
and $u$ and $v$ belong to two odd components of $G-e-S_e'$.
Then, for the edge $e$, $S_e'$ and $G-e-S_e'$
satisfy the conclusions of Theorem \ref{AC1994}.
But $|S_e'|<|S_e|$, a contradiction to the minimality of $S_e$.

Suppose that $w$ has neighbors in at least four odd components of $G_e$.
Since the only edge between different odd components of $G_e$ is $e$,
at least three components of $G_e+e$ contains vertices in $N(w)$.
Let three of them be $w_1, w_2$ and $w_3$.
Then $G[\{w, w_1, w_2, w_3\}]$ is a claw, a contradiction.
Finally, if $w$ has neighbors in exactly three
odd components but $uw\notin E(G)$ or $vw\notin E(G)$,
then $G$ also contains a claw, a contradiction.

Noticing that $M$ is an arbitrary $2$-matching in $G[S_e]$,
every vertex in $S_e$ satisfies the property whenever $|M(S_e)|\geq 3$.
\hfill $\square$

\begin{claim}\label{m3}
$|M(S_e)|=2$.
\end{claim}

\noindent Proof of Claim \ref{m3}.
Suppose to the contrary that $|M(S_e)|\geq 3$.
We construct a graph $G'$ by contracting every $O_i$ into a
single vertex and deleting all the multiple edges appeared in this process.
Let $U_{e}$ denote the set of the vertices resulting from the contraction of all $O_i's$.
Then $|U_{e}|=|S_{e}|-2$ and $V(G')=S_{e}\cup U_{e}$.
For convenience, we use $u'$ and $v'$ to represent
the vertices resulting from the contractions of $O_1$ and $O_2$, respectively.
Since $G$ is $3$-connected, each $O_i$ is connected to at least $3$ vertices in
$V(G) \backslash V(O_i)$, $i=1, 2, \ldots, t$.
It follows that every vertex in $U_{e} \backslash \{u', v'\}$ has
at least $3$ neighbors in $S_{e}$.
Then there are at least $3 (|U_{e}|-2)$
edges from $U_{e} \backslash \{u', v'\}$ to $S_{e}$ in $G'$.

Conversely, by Claim \ref{m1}, every vertex in $S_{e}$ has a neighbor in
exactly one $O_{i}$ for some $i \geq 3$. Then there are exactly $|S_{e}|$ edges
from $S_{e}$ to $U_{e} \backslash \{u', v'\}$ in $G'$.
So we estimate the number of the edges between $S_{e}$
and $U_{e} \backslash \{u', v'\}$ as follows:
\begin{center}
$|S_{e}|=|E(S_{e}, U_{e} \backslash \{u', v'\})| \geq 3 (|U_{e}|-2)=3 (|S_{e}|-4)$,
\end{center}
which implies that $|S_{e}| \leq 6$.
By the hypothesis that $|S_{e}| \geq 6$, $|S_{e}|=6$.
So $|M(S_e)|=3$ and $t=c_o(G_e)=4$.

We show that $|V(O_1)|=1$ and $|V(O_2)|=1$.
By symmetry, suppose that $|V(O_1)|>1$.
If there exists a vertex $x$ in $S_e$
such that $N(x) \cap (V(O_{1}) \backslash \{u\})\neq \emptyset$, then
let $x_1\in N(x)\cap (V(O_{1}) \backslash \{u\})$.
By Claim \ref{m1}, $vx\in E(G)$ and $N(x) \cap V(O_i)\neq \emptyset$
for some $i \geq 3$.
Let $x_2\in N(x) \cap V(O_i)$.
Then $G[\{x, x_1, x_2, v\}]$ is a claw, a contradiction.
So none of the vertices in $S_{e}$ have neighbors in $V(O_{1}) \backslash \{u\}$.
It implies that $\{u\}$ is a vertex cut of $G$, contradicting that $G$ is $3$-connected.

If there exists an edge $x_1y_1$ in $M(S_e)$ such that
$N(x_1)\cap V(O_3)\neq \emptyset$ and $N(y_1)\cap V(O_3)\neq \emptyset$,
then $N(x_1)\cap V(O_4)=\emptyset$ and $N(y_1)\cap V(O_4)=\emptyset$
by Claim \ref{m1}.
Thus $M(S_e)\backslash \{x_1y_1\}$ is a $2$-matching and the removal of it
results in an odd component $O_4$. By Lemma \ref{next1},
$G$ is not $2$-extendable, a contradiction.
Then it is impossible that the two end-vertices of any edge in $M(S_e)$
have neighbors in $O_3$ (resp. $O_4$).

It follows that
if there exist edges $\{x_1y_1, x_2y_2\}\subseteq M(S_e)$ such that
$N(x_1)\cap V(O_3)\neq \emptyset$ and
$N(x_2)\cap V(O_3)\neq \emptyset$,
then $N(y_1)\cap V(O_3)=\emptyset$ and $N(y_2)\cap V(O_3)=\emptyset$.
Suppose that $x_1x_2\in E(G)$.
Then $(M(S_e)\backslash \{x_1y_1, x_2y_2\})\cup \{x_1x_2\}$ is a $2$-matching and
the removal of it results in an odd component $O_3$.
By Lemma \ref{next1}, $G$ is not $2$-extendable, a contradiction.

\begin{figure}[h]
\centering
\includegraphics[height=5cm,width=5cm]{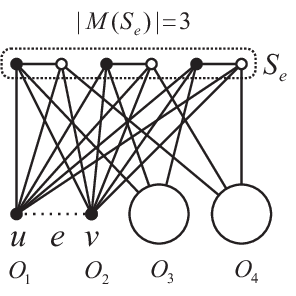}
\caption{\label{tu-m3} All the black (resp. white) vertices in $S_e$ form the
independent set $A$ (resp. $B$).}
\end{figure}

From the discussions in the previous two paragraphs, we deduce that all the vertices in $S_e$
that have neighbors in $O_3$ (resp. $O_4$) form an independent set of $G$, say $A$ (resp. $B$)
(as shown in Fig. \ref{tu-m3}).
By Claim \ref{m1}, every vertex in $A$ is adjacent to $u$.
Since $|A|=|M(S_e)|=3$, $G[A\cup \{u\}]$ is a claw, a contradiction.
\hfill $\square$

\vspace{2mm}
By Claim \ref{m3}, we assume that $M(S_e)=\{x_1y_1, x_2y_2\}$ in the following proof.

\begin{claim}\label{m4} If $|S_e|=5$ or $6$, then, for every edge in $M(S_e)$,
at least one end-vertex of it has a neighbor in $O_i, i=3, 4, \ldots, t$.
\end{claim}

\noindent Proof of Claim \ref{m4}.
If $|S_e|=6$, then $t=4$.
Let $\{a_1, a_2\}=S_e \backslash V(M(S_e))$. By Claim \ref{m1},
we may assume that $N(a_1)\cap V(O_3)\neq \emptyset$ and
$N(a_2)\cap V(O_4)\neq \emptyset$.
Then $N(a_1)\cap V(O_4)=\emptyset$ and $N(a_2)\cap V(O_3)=\emptyset$
as $G$ is claw-free. Without loss of generality,
suppose that $N(x_1)\cap V(O_3)=\emptyset$ and $N(y_1)\cap V(O_3)=\emptyset$.
Then $\{x_2y_2, ua_1\}$ is a $2$-matching and
the removal of it results in an odd component $O_3$.
By Lemma \ref{next1}, $G$ is not $2$-extendable, a contradiction.
If $|S_e|=5$, then the proof is similar to the case of $|S_e|=6$.
\hfill $\square$

\begin{claim}\label{m6}
$|S_e|=4$ or $5$.
\end{claim}

\noindent Proof of Claim \ref{m6}.
Since $|M(S_e)|\geq 2$, $|S_e|\geq 4$. Suppose to the contrary that $|S_e|\geq 6$.
We divide the proof into the two cases $|S_e|\geq 7$ and $|S_e|=6$.

If $|S_e|\geq 7$, then, by Claim \ref{m3}, the set of the vertices
in $S_e\backslash V(M(S_e))$ is an independent set of $G$.
By Claim \ref{m1}, every vertex in $S_e\backslash V(M(S_e))$
is adjacent to $u$. Then $G[\{u\}\cup (S_e\backslash V(M(S_e)))]$
contains a claw, a contradiction.

If $|S_e|=6$, then $t=4$.
Let $\{a_1, a_2\}=S_e\backslash V(M(S_e))$.
Then $\{a_1, a_2\}$ is an independent set of $G$.
By Claim \ref{m1}, exactly three odd components of $G_e$ contain vertices in $N(a_1)$
(resp. $N(a_2)$) and $\{ua_1, va_1\}\subseteq E(G)$
(resp. $\{ua_2, va_2\}\subseteq E(G)$).
We may assume that
$N(a_1)\cap V(O_3)\neq \emptyset$, $N(a_1)\cap V(O_4)=\emptyset$,
$N(a_2)\cap V(O_3)=\emptyset$ and $N(a_2)\cap V(O_4)\neq \emptyset$.
Since $G$ is claw-free, $|V(O_1)|=1$ and $|V(O_2)|=1$.

We next show that every vertex in $V(M(S_e))$ has a neighbor in one of $O_3$
and $O_4$.

Without loss of generality, suppose
that $N(x_1)\cap V(O_3)\neq \emptyset$ and $N(x_1)\cap V(O_4)\neq \emptyset$.
Then $ux_1\not\in E(G)$ and $vx_1\not\in E(G)$ as $G$ is claw-free.
Since $\delta(G)\geq 6$, $u$ and $v$ are adjacent to every vertex
in $S_e\backslash \{x_1\}$. So $a_1y_1\in E(G)$ or $a_2y_1\in E(G)$.
Otherwise, $G[\{u, a_1, a_2, y_1\}]$ is a claw,
a contradiction. Let $a_1y_1\in E(G)$ and $S_{e}'=S_e\backslash \{x_1\}$.
Then $M(S_{e}')=\{x_2y_2, a_1y_1\}$,
$c_o(G-e-S_{e}')=$$c_o(G-e-S_{e})-1=$$|S_{e}'|-2$ and
$u$ and $v$ belong to two odd components of $G-e-S_{e}'$. But $|S_{e}'|<|S_e|$,
a contradiction to the minimality of $S_e$.
By Claim \ref{m4}, $O_3$ (resp. $O_4$) is connected to at least one end-vertex of each edge in $M(S_e)$.
Thus, for every edge in $M(S_e)$, one end-vertex has a neighbor in $O_3$
and the other end-vertex has a neighbor in $O_4$.

\begin{figure}[h]
\centering
\includegraphics[height=5.2cm,width=5.4cm]{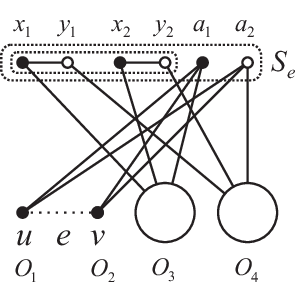}
\caption{\label{tu-c6} All the black (resp. white) vertices in $S_e$
form the independent set $A$ (resp. $B$).}
\end{figure}

Moreover, all the vertices in $S_e$ that have neighbors in $O_3$ (resp. $O_4$)
form an independent set $A$ (resp. $B$) of $G$ (as shown in Fig. \ref{tu-c6}).
Otherwise, let
$N(x_i)\cap V(O_3)\neq \emptyset$, $N(x_i)\cap V(O_4)=\emptyset$,
$N(y_i)\cap V(O_3)=\emptyset$ and $N(y_i)\cap V(O_4)\neq\emptyset$,
$i=1, 2$.
If $x_1x_2\in E(G)$,
then $\{x_1x_2, ua_1\}$ is a $2$-matching and the removal
of it results in an odd component $O_3$.
By Lemma \ref{next1}, $G$ is not $2$-extendable, a contradiction.
Similarly, $a_1x_1\in E(G)$ or $a_1x_2\in E(G)$.
Since $d_G(u)\geq 6$, there is at most one vertex in $S_e$ which is not
adjacent to $u$, say the vertex in $A$.
Then $G[B\cup \{u\}]$ is a claw, a contradiction.
\hfill $\square$

\bigskip
In the sequel, we show that if $|S_e|=4$, then {\rm (\romannumeral1)} holds,
and {\rm (\romannumeral2)} holds otherwise.

\begin{case}\label{case1}
$|S_e|=4$.
\end{case}

In this case, $t=2$. If $|V(O_1)|=1$, then $N(u)\subseteq S_e\cup \{v\}$.
It implies that $d_G(u)\leq 5$, contradicting the hypothesis that $\delta(G)\geq 6$.
Similarly,  $|V(O_2)|>1$. So (\romannumeral1) holds.



\begin{case}\label{case2}
$|S_e|=5$.
\end{case}

Under this condition, $t=3$. Let $S_e\backslash V(M(S_e))=\{a_1\}$.
By Claim \ref{m1}, $\{ua_1, va_1\}\subseteq E(G)$
and $N(a_1)\cap V(O_3)\neq \emptyset$.
We first show that $G_e$ has no even component.

Suppose to the contrary that $G_e$ has an even component $C$.
Since $G$ is claw-free, $N(a_1)\cap V(C)=\emptyset$.
Then $V(M(S_e))$ is a vertex cut of $G$.
By Lemma \ref{next3}, every vertex in $V(M(S_e))$ has a neighbor in $C$.



Let $x_{1}'\in N(x_1)\cap V(C)$ and $y_{1}'\in N(y_1)\cap V(C)$.
By Claim \ref{m4}, $N(x_1)\cap V(O_3)\neq \emptyset$ or
$N(y_1)\cap V(O_3)\neq \emptyset$.
Without loss of generality, let $y_{1}''\in N(y_1)\cap V(O_3)$.
Since $G$ is claw-free, $N(y_1)\cap V(O_1)=\emptyset$ and
$N(y_1)\cap V(O_2)=\emptyset$.
If $|V(O_1)|=1$, then $N(u)\subseteq (S_e\backslash \{y_1\})$ $\cup$ $\{v\}$.
That is, $d_G(v)\leq 5$, a contradiction. Similarly, $|V(O_2)|>1$.

If $x_1$ and $y_1$ have no neighbors in $V(O_1)\backslash \{u\}$,
then $\{x_2y_2, va_1\}$ is a $2$-matching
and the removal of it results in an odd component $O_1$.
By Lemma \ref{next1}, $G$ is not $2$-extendable, a contradiction.
If $x_1$ (resp. $y_1$) has neighbors in both $V(O_1)\backslash \{u\}$
and $V(O_2)\backslash \{v\}$,
then $G$ has a claw, a contradiction.
Thus, for each edge in $M(S_e)$, one end-vertex has a neighbor in $V(O_1)\backslash \{u\}$
and the other end-vertex has a neighbor in $V(O_2)\backslash \{v\}$.
Without loss of generality, let $y_{1}'''\in N(y_1)\cap (V(O_2)\backslash \{v\})$.
Then $G[\{y_1, y_{1}', y_{1}'', y_{1}'''\}]$ is a claw, a contradiction.
Therefore, $G_e$ has no even component.

We next prove that $|V(O_i)|=1$ for $i=1, 2$.
Suppose to the contrary that $|V(O_1)|>1$ or $|V(O_2)|>1$.
By symmetry, we consider the following two subcases.

\begin{subcase}\label{subcase1}
$|V(O_1)|>1$ and $|V(O_2)|=1$ $($as shown in Fig. $\ref{tu-sub1}\ ({\rm a}))$.
\end{subcase}

\begin{figure}[h]
\centering
\includegraphics[height=5.1cm,width=9.9cm]{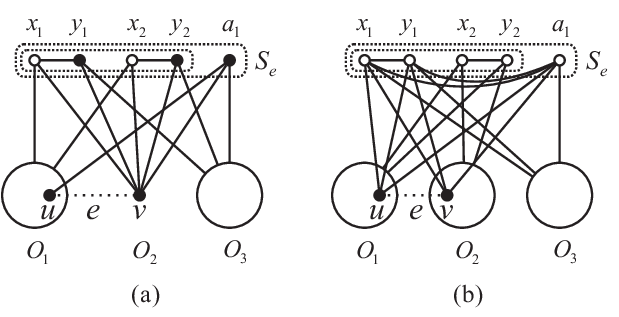}
\caption{\label{tu-sub1} (a) $|V(O_1)|>1$ and $|V(O_2)|=1$; (b) $|V(O_1)|>1$ and $|V(O_2)|>1$.}
\end{figure}

Since every vertex in $S_e$ is adjacent to $v$ and $G$ is claw-free,
every vertex in $V(M(S_e))$ can not have
neighbors in both $V(O_1)\backslash \{u\}$ and $V(O_3)$.
Noticing that $G$ is $3$-connected, $V(O_1)\backslash \{u\}$ is connected to
at least $2$ vertices in $V(M(S_e))$.
By Claim \ref{m4}, at least one end-vertex of each edge in $M(S_e)$
has a neighbor in $V(O_3)$.
Thus, for every edge in $M(S_e)$,
one end-vertex has a neighbor in $V(O_1)\backslash \{u\}$
and the other end-vertex has a neighbor in $V(O_3)$.
Denote by $A$ the set of all vertices in $S_e$ which have neighbors in $V(O_3)$.
Then $A$ is an independent set of $G$.
Otherwise, let $A=\{y_1, y_2, a_1\}$
(as shown in Fig. $\ref{tu-sub1}\ ({\rm a})$).
If $y_1y_2\in E(G)$, then $\{y_1y_2, ua_1\}$ is a $2$-matching and
the deletion of the $2$-matching results in an odd component $O_3$,
which contradicts Lemma \ref{next1}.
But then $G[A\cup \{v\}]$ is a claw, a contradiction.
Similarly, $y_1a_1\in E(G)$ or $y_2a_1\in E(G)$.

\begin{subcase}\label{subcase2}
$|V(O_1)|>1$ and $|V(O_2)|>1$
$($as shown in Fig. $\ref{tu-sub1}\ ({\rm b}))$.
\end{subcase}

Since $G$ is claw-free, $N(a_1)\cap V(O_1)=\{u\}$ and $N(a_1)\cap V(O_2)=\{v\}$.
If $N(a_1)\cap S_e\neq \emptyset$, say $a_1x_1\in E(G)$,
then let $M'(S_{e})=\{a_1x_1, x_2y_2\}$.
By Claim \ref{m1}, $\{uy_1, vy_1\}\subseteq E(G)$
and $N(y_1)\cap V(O_3)\neq \emptyset$.
Since $O_1$ and $O_2$ are nontrivial, $a_1y_1\in E(G)$. Otherwise,
let $u_1\in N(u)\cap V(O_1)$. Then $G[\{u, u_1, a_1, y_1\}]$
is a claw, a contradiction. So $G[\{x_1, y_1, a_1\}]$ is a complete graph
(as shown in Fig. $\ref{tu-sub1}\ ({\rm b})$).
Let $M''(S_{e})=\{a_1y_1, x_2y_2\}$.
By Claim \ref{m1}, $\{ux_1, vx_1\}\subseteq E(G)$
and $N(x_1)\cap V(O_3)\neq \emptyset$.
It follows that $x_1, y_1$ and $a_1$ have no neighbors in $V(O_1)\backslash \{u\}$
(resp. $V(O_2)\backslash \{v\}$) as $G$ is claw-free.
Noticing that $G$ is $3$-connected, $x_2$ and $y_2$
have neighbors in $V(O_1)\backslash \{u\}$ (resp. $V(O_2)\backslash \{v\}$).
By Claim \ref{m4}, $N(x_2)\cap V(O_3)\neq \emptyset$ or $N(y_2)\cap V(O_3)\neq \emptyset$.
Then $G$ has a claw, a contradiction.
So $N(a_1)\cap S_e=\emptyset$.

Finally, let $a_2\in N(a_1)\cap V(O_3)$ and $e_1=a_1a_2$.
Let $S_{e_1}$, $M(S_{e_1})$ and $G_{e_1}$ be defined in Theorem \ref{AC1994}
and $S_{e_1}$ be a smallest vertex set satisfying the
conclusions of Theorem \ref{AC1994}
when Theorem \ref{AC1994} is applied to $e_1$.
By Claims \ref{m3} and \ref{m6}, $|M(S_{e_1})|=2$ and $|S_{e_1}|=4$ or $5$.

\vspace{0.3cm}
\noindent {\bf Subcase 2.2.1. $|S_{e_1}|=4$.}
\vspace{0.3cm}

Let $O_{a_1}$ and $O_{a_2}$ be two odd components of $G_{e_1}$.
Then $|V(O_{a_1})|>1$ and $|V(O_{a_2})|>1$ by (\romannumeral1).
Let $A=N(a_1) \cap N(a_2)$.
Since $d_G(a_1)\geq 6$ and
$N(a_1) \cap (V(G)\backslash V(O_3))=\{u, v\}$,
$|N(a_1) \cap V(O_3)|\geq 4$. So $|N(a_1) \cap N(a_2)|\geq 3$.
Moreover, $A\subseteq S_{e_1}$. Then $|A|=3$ or $4$.
Because $E(\{u, v\}, A)=\emptyset$, $\{u, v\}\cap S_{e_1}=\emptyset$.
Then $N(a_1) \cap V(O_{a_1})=\{u, v\}$. It follows that
if $A$ contains a vertex which has a neighbor in $V(O_{a_1})\backslash \{a_1\}$,
then the vertex has no neighbor in $V(O_{a_2})\backslash \{a_2\}$.
Otherwise, $G$ has a claw, a contradiction.
Since $G$ is $3$-connected, $|E(V(O_{a_1})\backslash \{a_1\}, S_{e_1})|\geq 2$
and $|E(V(O_{a_2})\backslash \{a_2\}, S_{e_1})|\geq 2$.

If $|A|=3$, then let $A=\{b_1, b_2, b_3\}$ and $S_{e_1}=A\cup \{c\}$.
There exist exactly two vertices in $S_{e_1}$ including the vertex $c$
which have neighbors in $V(O_{a_1})\backslash \{a_1\}$
or $V(O_{a_2})\backslash \{a_2\}$.
Without loss of generality, assume that $c$ and $b_1$ have neighbors in
$V(O_{a_1})\backslash \{a_1\}$. Let $c'\in N(c)\cap (V(O_{a_1})\backslash \{a_1\})$.
Then $\{cc', b_1a_1\}$ is a $2$-matching and the removal of it
results in an odd component $G[V(O_{a_1})\backslash \{a_1, c'\}]$,
which contradicts Lemma \ref{next1}.

If $|A|=4$, then $|E(V(O_{a_1})\backslash \{a_1\}, S_{e_1})|=2$.
Let $A=\{b_1, b_2, b_3, b_4\}$. Without loss of generality, assume that
$b_1$ and $b_2$ have neighbors in $V(O_{a_1})\backslash \{a_1\}$.
Let $b_{1}'\in N(b_1)\cap (V(O_{a_1})\backslash \{a_1\})$.
Then $\{b_1b_{1}', b_2a_1\}$ is
a $2$-matching and the removal of it results in
an odd component $G[V(O_{a_1})\backslash \{a_1, b_{1}'\}]$,
which contradicts Lemma \ref{next1}.

\vspace{0.3cm}
\noindent {\bf Subcase 2.2.2. $|S_{e_1}|=5$.}
\vspace{0.3cm}

By Subcase \ref{subcase1},
$G_{e_1}$ has no Configuration as shown in Fig. \ref{tu-sub1} (a).
Note that $N(u)\cap N(v)$ contains an isolate vertex $a_1$.
If $G_{e_1}$ has Configuration as shown in Fig. \ref{tu-sub1} (b),
then $N(a_1)\cap N(a_2)$ contains an isolate vertex.
Since $|N(a_1) \cap N(a_2)|\geq 3$ and $G$ is claw-free,
$G[N(a_1)\cap N(a_2)]$ is a complete graph.
That is, $N(a_1)\cap N(a_2)$ has no isolate vertex.
Then $G_{e_1}$ has no Configuration as shown in Fig. \ref{tu-sub1} (b).
It follows that $G[\{a_1\}]$ and $G[\{a_2\}]$ are two trivial odd components of $G_{e_1}$.
Thus $\{u, v\}\subseteq S_{e_1}$ and $\{u, v\}\cap N(a_2)=\emptyset$.
So $N(a_2)\subseteq (S_{e_1}\backslash \{u, v\})\cup \{a_1\}$,
that is, $d_G(a_2)\leq 4$,
contradicting that $\delta(G)\geq 6$.

\vspace{0.3cm}

By Subcases \ref{subcase1} and \ref{subcase2}, $|V(O_1)|=1$ and $|V(O_2)|=1$.
Since $\delta(G)\geq 6$, $u$ and $v$ are adjacent to every vertex in $S_e$.
So (\romannumeral2) holds.
\end{proof}

By Lemma \ref{lem1}, we know that if $S_e$ is a smallest set satisfying
the conclusions of Theorem \ref{AC1994} when Theorem \ref{AC1994} is
applied to the edge $e$, then $S_e$ satisfies the conclusions of Lemma \ref{lem1}.
In subsequent discussions, we keep in mind that $S_e$ is always a certain smallest set
when Theorem \ref{AC1994} is applied to the edge $e$, and $G_{e}=G-e-S_{e}$.
Let $O_1, O_2$ and $O_3$ be the odd components of $G_e$ such that
$u \in V(O_1)$ and $v \in V(O_2)$.

\begin{defi}\label{def1}
Let $G$ be a minimal $2$-extendable claw-free graph with
$\delta(G)\geq 6$.
For any edge $e=uv\in E(G)$,
let $S_e$ be a smallest set satisfying the conclusions of Theorem $\ref{AC1994}$
when Theorem $\ref{AC1994}$ is applied to the edge $e$. Let $G_{e}=G-e-S_{e}$.
We say $e$ is of type $1$ if $G_e$ satisfies Statement {\rm (\romannumeral1)}
of Lemma $\ref{lem1};$ otherwise, $e$ is of type $2$.
\end{defi}

\begin{figure}[h]
\centering
\includegraphics[height=5cm,width=9.2cm]{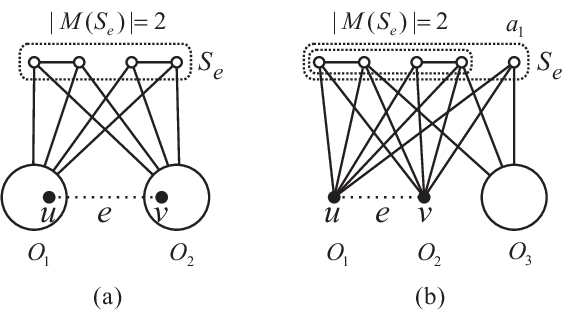}
\caption{\label{tu-def} (a) $e$ is of type 1 if $|S_e|=4$; (b) $e$ is of type 2 if $|S_e|=5$.}
\end{figure}

If $e$ is of type $1$, then $G$ has the configuration as shown in Fig. \ref{tu-def}\ (a);
if $e$ is of type $2$, then $G$ has the configuration as shown in Fig. \ref{tu-def}\ (b)
by Lemma \ref{lem1}. Thus we have the following proposition.

\begin{prop}\label{prop1}
Let $G$ be a minimal $2$-extendable claw-free graph with $\delta(G)\geq 6$.
For any edge $uv$ of $G$, $uv$ is either of type $1$ or of type $2$,
not both.
\end{prop}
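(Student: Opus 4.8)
The plan is to read the proposition off Lemma \ref{lem1}: the two alternatives of that lemma are mutually exclusive, and the quantity on which they hinge --- the cardinality of a smallest set $S_e$ produced by Theorem \ref{AC1994} --- is an invariant of the edge $e$, not an artefact of the particular smallest set singled out in Definition \ref{def1}.

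First I would fix an edge $e=uv\in E(G)$ and apply Lemma \ref{lem1} to it: for every smallest set $S_e$ satisfying the conclusions of Theorem \ref{AC1994} we have $|M(S_e)|=2$, and $G_e=G-e-S_e$ satisfies statement (i) or statement (ii) of that lemma. These cannot hold simultaneously for the same $G_e$, since statement (i) forces $|S_e|=4$ with $|V(O_1)|,|V(O_2)|>1$, whereas statement (ii) forces $|S_e|=5$ with $|V(O_1)|=|V(O_2)|=1$. Hence, with $S_e$ fixed as in Definition \ref{def1}, the edge $e$ is of type $1$ exactly when $|S_e|=4$ (equivalently, $G_e$ realises (i)) and of type $2$ exactly when $|S_e|=5$ (equivalently, $G_e$ realises (ii)); in particular $e$ is not of both types.

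It then remains only to check that the type attached to $e$ is independent of the smallest set chosen in Definition \ref{def1}, and this is immediate from minimality: any two smallest sets satisfying the conclusions of Theorem \ref{AC1994} have the same cardinality, so they fall under the same alternative of Lemma \ref{lem1} --- if one of them has four elements then so does every one of them (each then yielding statement (i)), and likewise if one has five. Consequently ``type $1$'' and ``type $2$'' are mutually exclusive and together exhaust $E(G)$, and the configuration of Fig. \ref{tu-def}(a), respectively Fig. \ref{tu-def}(b), is attached unambiguously to $e$. I do not expect a genuine obstacle here; the only point requiring care is to invoke the explicit cardinality clauses of Lemma \ref{lem1}, so as to rule out a four-element $S_e$ realising statement (ii), or a five-element one realising statement (i).
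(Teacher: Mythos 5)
Your proposal is correct and follows essentially the same route as the paper: the proposition is read directly off Lemma \ref{lem1}, whose alternatives (i) and (ii) are mutually exclusive because they force $|S_e|=4$ and $|S_e|=5$ respectively, so with Definition \ref{def1} the edge falls under exactly one type. Your additional remark that the type does not depend on which smallest set $S_e$ is chosen (all smallest sets share the minimum cardinality, hence realise the same alternative) is a small well-definedness check the paper leaves implicit, but it does not change the argument.
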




Note that if $uv$ is of type 2, then
$N(u)=S_{uv} \cup \{v\}$ and $N(v)=S_{uv} \cup \{u\}$.
It follows that $N(u) \backslash \{v\}=N(v)\backslash\{u\}=S_{uv}$.
We state it as a proposition.

\begin{prop}\label{prop3}
Let $G$ be a minimal $2$-extendable claw-free graph with $\delta(G)\geq 6$.
If $uv\in E(G)$ and $uv$ is of type $2$,
then $N(u) \backslash \{v\}=N(v) \backslash\{u\}$.
\end{prop}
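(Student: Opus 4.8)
The plan is to read off the claimed identity directly from the structural description of type $2$ edges furnished by Lemma \ref{lem1}(ii) and Definition \ref{def1}. Suppose $uv\in E(G)$ is of type $2$. Then, with $S_{uv}$ the smallest set obtained by applying Theorem \ref{AC1994} to $e=uv$, Lemma \ref{lem1}(ii) tells us that $|S_{uv}|=5$, that $G_{uv}=G-uv-S_{uv}$ has exactly three odd components $O_1,O_2,O_3$ with $u\in V(O_1)$, $v\in V(O_2)$, $|V(O_1)|=|V(O_2)|=1$, and that $G_{uv}$ has no even component.

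First I would use $|V(O_1)|=1$, i.e. $O_1=\{u\}$: since $O_1$ is a component of $G_{uv}=G-uv-S_{uv}$, the vertex $u$ has no neighbour in $G-uv-S_{uv}$ at all, so every neighbour of $u$ lies in $S_{uv}\cup\{v\}$; that is, $N(u)\subseteq S_{uv}\cup\{v\}$. Symmetrically $O_2=\{v\}$ gives $N(v)\subseteq S_{uv}\cup\{u\}$. Second, I would get the reverse containments: by the ``moreover'' clause of Lemma \ref{lem1}(ii), every vertex $w\in S_{uv}\setminus V(M(S_{uv}))$ satisfies $\{uw,vw\}\subseteq E(G)$, and $\delta(G)\ge 6$ forces $u$ and $v$ to be adjacent to the remaining four vertices of $S_{uv}$ as well — indeed this is exactly what is recorded in the sentence of the excerpt stating ``if $uv$ is of type $2$, then $N(u)=S_{uv}\cup\{v\}$ and $N(v)=S_{uv}\cup\{u\}$,'' and in the last line of the proof of Lemma \ref{lem1} (``Since $\delta(G)\ge 6$, $u$ and $v$ are adjacent to every vertex in $S_e$''). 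Hence $S_{uv}\subseteq N(u)$ and $S_{uv}\subseteq N(v)$, and combining with the first step, $N(u)=S_{uv}\cup\{v\}$ and $N(v)=S_{uv}\cup\{u\}$.

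Finally I would simply subtract the offending vertex: $N(u)\setminus\{v\}=S_{uv}=N(v)\setminus\{u\}$, which is the assertion of Proposition \ref{prop3}. (Note $u\notin S_{uv}$ and $v\notin S_{uv}$ since $S_{uv}\subseteq V(G)\setminus\{u,v\}$ by Theorem \ref{AC1994}, so the set subtractions are clean and no degenerate case arises.)

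There is essentially no obstacle here: the proposition is a bookkeeping corollary of Lemma \ref{lem1}(ii), and the only thing to be careful about is citing the right pieces — namely that $|V(O_1)|=|V(O_2)|=1$ yields $N(u),N(v)\subseteq S_{uv}\cup\{v\},\,S_{uv}\cup\{u\}$, while the ``moreover'' clause together with the minimum-degree hypothesis $\delta(G)\ge 6$ yields the reverse inclusions $S_{uv}\subseteq N(u)\cap N(v)$. The ``not both'' dichotomy of Proposition \ref{prop1} is not needed for this statement; it only guarantees that ``$uv$ is of type $2$'' is a well-defined hypothesis.
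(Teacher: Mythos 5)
Your proposal is correct and matches the paper's own (very brief) justification: the paper simply observes before Proposition \ref{prop3} that a type $2$ edge has $O_1=\{u\}$, $O_2=\{v\}$ trivial, so $N(u)\subseteq S_{uv}\cup\{v\}$ and $N(v)\subseteq S_{uv}\cup\{u\}$, and $\delta(G)\ge 6$ with $|S_{uv}|=5$ forces equality, whence $N(u)\setminus\{v\}=S_{uv}=N(v)\setminus\{u\}$. Your write-up is the same argument, just spelled out in more detail (the appeal to the ``moreover'' clause of Lemma \ref{lem1}(ii) is not even needed, since the degree count alone gives $S_{uv}\subseteq N(u)\cap N(v)$).
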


\begin{prop}\label{prop4}
Let $G$ be a minimal $2$-extendable claw-free graph with
$\delta(G)\geq 6$.
For any vertex $u$ of $G$, $u$ is incident with at most two edges of type $2$.
\end{prop}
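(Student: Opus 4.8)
The plan is to argue by contradiction: suppose some vertex $u$ is incident with three edges of type $2$, say $ua$, $ub$, $uc$. By Proposition~\ref{prop3}, each of the three edges $ua$, $ub$, $uc$ being of type $2$ forces $N(u)\setminus\{a\}=N(a)\setminus\{u\}$, and similarly for $b$ and $c$. In particular $b,c\in N(a)$, $a,c\in N(b)$, $a,b\in N(c)$, so $\{u,a,b,c\}$ induces a complete graph $K_4$ in $G$. More strongly, the identities $N(u)\setminus\{a\}=N(a)\setminus\{u\}$ etc. say that $u,a,b,c$ all have the \emph{same} closed neighbourhood; call this common set (the closed neighbourhood of $u$) $N$, and note $|N|=d_G(u)+1\ge 7$.

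The next step is to exploit the structure of a type-$2$ edge from Lemma~\ref{lem1}(ii). Apply Theorem~\ref{AC1994}/Lemma~\ref{lem1} to $e=ua$: then $S_{ua}=N(u)\setminus\{a\}=N(a)\setminus\{u\}$ has size $5$, $M(S_{ua})$ is a $2$-matching, $G_{ua}=G-ua-S_{ua}$ has exactly three odd components and no even component, the components $O_1\ni u$ and $O_2\ni a$ are trivial (so $O_1=\{u\}$, $O_2=\{a\}$), and $O_3$ is the third odd component with every vertex of $S_{ua}\setminus V(M(S_{ua}))$ having a neighbour in $O_3$. Since $b,c\in S_{ua}$, the vertices $b$ and $c$ lie in $S_{ua}$, not in any $O_i$; and $V(G)=\{u,a\}\cup S_{ua}\cup V(O_3)$. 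Thus $V(O_3)=V(G)\setminus(\{u,a\}\cup S_{ua})=V(G)\setminus N$. Now run the same analysis on $e'=ub$: by the same reasoning $V(O_3')=V(G)\setminus N$ as well, because $b$ and $u$ have closed neighbourhood $N$. So the "far side" $R:=V(G)\setminus N$ is the same whether we delete $S_{ua}$ or $S_{ub}$, and $G[R]$ is connected (it is the odd component $O_3$ in each case).

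The heart of the contradiction should come from counting edges between $N$ and $R$, combined with claw-freeness. Here is the idea: since $G$ is $3$-connected, $R$ is joined to $N$ by at least $3$ vertex-disjoint paths, hence there are at least $3$ vertices of $N$ with a neighbour in $R$. Take $e=ua$ of type $2$: $S_{ua}=N\setminus\{u,a\}$ has $5$ vertices, its maximum matching has size $2$, so exactly one vertex $w_0\in S_{ua}$ is unmatched, and by Lemma~\ref{lem1}(ii) $w_0$ has a neighbour in $O_3=G[R]$ while $\{uw_0,aw_0\}\subseteq E(G)$. Now vary the choice of the type-$2$ edge at $u$: using $ub$ gives an unmatched vertex of $N\setminus\{u,b\}$ adjacent to $R$, etc. The key claim to establish is that \emph{at least two} vertices of $S_{ua}$ have a neighbour in $R$ — this follows from Claim~\ref{m4} applied to $e=ua$ (with $|S_{ua}|=5$, for every edge of $M(S_{ua})$ at least one end has a neighbour in $O_3$), so together with $w_0$ we get that the set $A_{ua}$ of vertices of $S_{ua}$ with a neighbour in $R$ has $|A_{ua}|\ge 2$, in fact contains $w_0$ plus one endpoint of each of the two matching edges, so $|A_{ua}|\ge 3$. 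Since $u,a\notin A_{ua}$ (they are trivial components of $G_{ua}$, so have no neighbour in $R$), we get $3$ vertices of $N\setminus\{u,a\}$ adjacent to $R$; doing this with $ua$, $ub$, $uc$ and noting $u$ itself is never in such a set, while $a$, $b$, $c$ might only be excluded one at a time, one forces at least $3$ vertices of $N\setminus\{u,a,b,c\}$ adjacent to $R$, or else derives directly a claw. Finally, pick $r\in R$ with $\ge 3$ neighbours in $N$ (such $r$ exists, else $R$ is joined to $N$ by a cut of size $<3$ through $R$); since $G$ is claw-free and $u$ is adjacent to all of $N\setminus\{u\}\supseteq$ those three neighbours of $r$ but $u\notin R$, we look at whether $u$ is adjacent to $r$. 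If $ur\notin E(G)$, then three common neighbours of $u$ and $r$ in $N$ that are pairwise non-adjacent give a claw; so they are pairwise adjacent. Pushing this: the neighbourhood of $r$ inside $N$ must be a clique, and similarly for every vertex of $R$ adjacent to $N$; combined with $|A_{ua}|\ge 3$ and the fact that $w_0\in A_{ua}$ with $u\in N(w_0)$, one gets a claw centred at $w_0$ with leaves $u$, a second vertex of $N$, and a vertex of $R$ — unless $u$ has a neighbour in $R$, contradicting $O_1=\{u\}$.

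I would expect the main obstacle to be making the edge-count between $N$ and $R$ tight enough to force a claw cleanly, because the unmatched-vertex information from Lemma~\ref{lem1}(ii) only directly controls one vertex of each $S$, and one has to feed in Claim~\ref{m4} and $3$-connectivity carefully to get three vertices of $N$ adjacent to $R$ that are simultaneously in the common neighbourhood of $u,a,b,c$; the bookkeeping of which of $a,b,c$ can be excluded, and the case split on whether $ur\in E(G)$ for the chosen $r\in R$, is where the argument is most delicate. Once one vertex $w\in N$ adjacent to some $r\in R$ is known to satisfy $uw\in E(G)$ and $w$ to have another neighbour $w'\in N$ with $ww'\notin E(G)$ and $wr\in E(G)$, the claw $G[\{w,u,w',r\}]$ — after checking $ur\notin E(G)$ from $O_1=\{u\}$ and $w'r\notin E(G)$ from $w'$ being on the wrong side — closes the proof.
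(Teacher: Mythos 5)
Your setup is sound and close in spirit to the paper's: assuming three type-$2$ edges $ua,ub,uc$ at $u$, Proposition \ref{prop3} and Lemma \ref{lem1}(ii) do give $N[u]=N[a]=N[b]=N[c]=:N$ with $|N|=7$, that $R=V(G)\setminus N$ induces the (nonempty, odd, connected) component $O_3$ in each decomposition, and that every edge between $N$ and $R$ is incident with one of the three vertices $W=N\setminus\{u,a,b,c\}$ (indeed $N(a),N(b),N(c)\subseteq N$, so $a,b,c$ are excluded automatically, not merely ``one at a time'' as you hedge). The genuine gap is in your endgame. First, the step ``pick $r\in R$ with $\ge 3$ neighbours in $N$'' does not follow from $3$-connectivity: the three disjoint paths from $R$ to $N$ may leave $R$ through three different vertices of $R$, each with a single neighbour in $N$. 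Second, and more seriously, the final claw cannot exist: in a claw $G[\{w,u,w',r\}]$ centred at $w\in N$ with $w'\in N$, the leaves $u$ and $w'$ are adjacent, since $u$ is adjacent to every vertex of $N\setminus\{u\}$; your own description is also internally inconsistent ($w'$ is required to be a neighbour of $w$ yet $ww'\notin E(G)$). In fact no claw-based finish can work here: one can populate the configuration ($u,a,b,c$ a clique joined to all of $W$, one edge inside $W$, each $w_i$ with one private neighbour in $R$) so that it is claw-free, so the contradiction must come from matching extendability, which your endgame never invokes.

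The missing move is exactly the paper's. By claw-freeness at $u$, the set $W=\{w_1,w_2,w_3\}$ is not independent, say $w_1w_2\in E(G)$; then $\{w_1w_2,\,uw_3\}$ is a $2$-matching, and after deleting its four vertices no vertex of $N$ with a neighbour in $R$ remains, so the odd, nonempty set $R$ yields an odd component, contradicting $2$-extendability by Lemma \ref{next1}. The paper reaches the same contradiction locally, without building the common-closed-neighbourhood picture: fixing one type-$2$ edge $uv$ with $S_{uv}=\{x_1,y_1,x_2,y_2,a_1\}$, it shows $ua_1$ is never of type $2$, and that two further type-$2$ edges $ux_1,ux_2$ would force $a_1y_1,a_1y_2\notin E(G)$ (via Lemma \ref{next4} applied to the alternative matching $\{x_2y_2,a_1y_1\}$), hence $y_1y_2\in E(G)$ by claw-freeness at $u$, and then $\{y_1y_2,ua_1\}$ isolates $O_3$ --- the same mechanism your sketch needs but does not supply.
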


\begin{proof}
Let $e=uv\in E(G)$ and $e$ be of type 2.
Let $S_e=\{x_1, y_1, x_2, y_2, a_1\}$ and $M(S_e)=\{x_1y_1, x_2y_2\}$.
Since $N(a_1)\cap V(O_3)\neq \emptyset$ and $N(u)\cap V(O_3)=\emptyset$,
$ua_1$ is not of type 2 by Proposition \ref{prop3}.
If $ux_1$ is of type 2,
then $N(u)\backslash\{x_{1}\}=N(x_{1})\backslash\{u\}$
$=(S_e\backslash \{x_1\})\cup \{v\}$.
So $a_1x_1\in E(G)$ and $N(x_{1}) \cap V(O_3)=\emptyset$.
Then $a_1y_1\notin E(G)$. Otherwise,
let $M'(S_e)=\{x_2y_2, a_1y_1\}$.
By Lemma \ref{next4}, $N(x_{1}) \cap V(O_3)\neq \emptyset$, a contradiction.
Thus $N(u)\backslash\{y_{1}\} \neq N(y_{1})\backslash\{u\}$.
By Proposition \ref{prop3}, $uy_1$ is not of type 2.

Suppose that $ux_2$ is of type 2.
By similar discussions above, $a_1x_2\in E(G)$, $N(x_{2}) \cap V(O_3)=\emptyset$
and $a_1y_2\notin E(G)$.
Since $G$ is claw-free, $y_1y_2\in E(G)$.
Then $\{y_1y_2, ua_1\}$ is a $2$-matching and
the removal of it results in an odd component $O_3$,
which contradicts Lemma \ref{next1}.
Moreover, $N(u)\backslash\{y_{2}\} \neq N(y_{2})\backslash\{u\}$.
By Proposition \ref{prop3}, $uy_2$ is not of type 2.
Thus, among all the edges associated with $u$,
only $ux_1$ may be of type 2 other than the edge $uv$.
The result follows.
\end{proof}

\begin{defi}\label{def2}
Let $G$ be a graph and $e=uv \in E(G)$.
We say a vertex cut $X_{e}^{u}$
$($resp. $X_{e}^{v}$$)$ of $G$ satisfies Property $P$ if

{\rm (\romannumeral1)} $|X_{e}^{u}|=5$ and $|M(X_{e}^{u})|=2$
$($resp. $|X_{e}^{v}|=5$ and $|M(X_{e}^{v})|=2$$);$

{\rm (\romannumeral2)} $v\in X_{e}^{u}$ $($resp. $u\in X_{e}^{v}$$);$

{\rm (\romannumeral3)} $u$ lies in an odd component, say $C_u$, of
$G-X_{e}^{u}$ $($resp. $v$ lies in an odd component, say $C_v$, of $G-X_{e}^{v}$$);$ and

{\rm (\romannumeral4)} $E(V(C_u), \{v\})=\{uv\}$
$($resp. $E(V(C_v), \{u\})=\{uv\}$$)$.
\end{defi}

Let $G$ be a minimal $2$-extendable claw-free graph with $\delta(G)\geq 6$.
Let $e=uv\in E(G)$. If $e$ is of type 1, then $S_e\cup \{v\}$ is a
$5$-vertex cut of $G$, $M(S_e\cup \{v\})=2$
and $u$ lies in the odd component $O_1$ of $G-(S_e\cup \{v\})$.
Moreover, $E(V(O_1), \{v\})=\{uv\}$.
These observations lead to the following lemma.

\begin{lem}\label{lem2}
Let $G$ be a minimal $2$-extendable claw-free graph with $\delta(G)\geq 6$.
If $uv\in E(G)$ and $uv$ is of type $1$,
then there exists a $5$-vertex cut $S_{uv}\cup \{v\}$
$($resp. $S_{uv}\cup \{u\}$$)$ of $G$ satisfying Property $P$.
\end{lem}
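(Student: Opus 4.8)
The plan is to show directly that the set $X := S_{uv}\cup\{v\}$ fulfils all four clauses of Property~$P$ from Definition~\ref{def2}, with the component $C_u$ of that definition taken to be the odd component of $G_{uv}$ that contains $u$; the assertion for $S_{uv}\cup\{u\}$ then drops out from the complete symmetry between $u$ and $v$ in a type-$1$ edge (Fig.~\ref{tu-def}(a)), simply by interchanging the roles of the two odd components.

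First I would unpack what it means for $uv$ to be of type~$1$: by Definition~\ref{def1} together with Statement~(\romannumeral1) of Lemma~\ref{lem1}, we have $|S_{uv}|=4$, $|M(S_{uv})|=2$, the graph $G_{uv}=G-uv-S_{uv}$ has exactly the two odd components $O_1\ni u$ and $O_2\ni v$, and $|V(O_1)|,|V(O_2)|>1$. Clause~(\romannumeral1) of Property~$P$ is then immediate: $|X|=|S_{uv}|+1=5$, while $M(S_{uv})$ already exhibits a $2$-matching inside $G[S_{uv}]\subseteq G[X]$ and a graph on $5$ vertices admits no matching of size~$3$, so $|M(X)|=2$. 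Clause~(\romannumeral2), namely $v\in X$, holds by construction.

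Next I would examine $G-X=G_{uv}-v$. Since $v\in V(O_2)$, deleting $v$ alters only $O_2$, so $O_1$ survives intact; moreover the only edges of $G$ leaving $V(O_1)$ go either to $S_{uv}$ or --- because $uv$ is the unique edge of $G-S_{uv}$ between $V(O_1)$ and $V(O_2)$, which is exactly what it means for $O_1$ and $O_2$ to be distinct components of $G_{uv}$ --- to the vertex $v$, and both $S_{uv}$ and $v$ are removed in passing to $G-X$. Hence $C_u:=O_1$ is a component of $G-X$; it is odd because $|V(O_1)|$ is odd, and it contains $u$, which gives clause~(\romannumeral3). Since $V(O_2)\setminus\{v\}\neq\emptyset$, the graph $G-X$ has vertices outside $C_u$, so $X$ is indeed a vertex cut. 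Finally, the same uniqueness remark shows that no vertex of $O_1$ other than $u$ can be adjacent to $v$, so $E(V(C_u),\{v\})=E(V(O_1),\{v\})=\{uv\}$, which is clause~(\romannumeral4).

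I do not expect a genuine obstacle here: once Lemma~\ref{lem1} is in hand the whole argument is bookkeeping about which edges can persist between the pieces of $G-uv-S_{uv}$. The single point that should be stated with care is that $uv$ is the sole edge of $G-S_{uv}$ joining $V(O_1)$ to $V(O_2)$; granting that, every one of the four clauses of Property~$P$ follows from it together with the parities and the size bounds $|V(O_i)|>1$ supplied by Statement~(\romannumeral1) of Lemma~\ref{lem1}.
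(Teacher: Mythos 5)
Your proposal is correct and matches the paper's argument: the paper likewise obtains the lemma by observing that for a type-$1$ edge $uv$ (so $|S_{uv}|=4$, two nontrivial odd components $O_1\ni u$, $O_2\ni v$), the set $S_{uv}\cup\{v\}$ is a $5$-vertex cut with maximum matching of size $2$, with $O_1$ an odd component of $G-(S_{uv}\cup\{v\})$ and $E(V(O_1),\{v\})=\{uv\}$, and then appeals to the $u$--$v$ symmetry. Your more explicit bookkeeping (why no edge leaves $V(O_1)$ except into $S_{uv}$ or along $uv$, and why $|V(O_2)|>1$ guarantees $X$ is a cut) is just a fuller write-up of the same verification.
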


We are now ready to prove Theorem \ref{main}.

\bigskip
\noindent{\bf Proof of Theorem \ref{main}.}
By Theorem \ref{PL}, $\delta(G)\geq 4$.
Suppose to the contrary that $\delta(G)\geq 6$.
By Propositions \ref{prop1} and \ref{prop4},
every vertex of $G$ is incident with at least four edges of type 1.
Combining with Lemma \ref{lem2}, for every edge of type 1 of $G$,
there exists a $5$-vertex cut of $G$ satisfying Property $P$.

Among all $5$-vertex cuts of $G$ that satisfy Property $P$,
we choose one, denoted by $X_{e}^{u}$, such that
the odd component of $G-X_{e}^{u}$ containing $u$ is of the minimum order,
where $e=uv\in E(G)$ and $v\in X_{e}^{u}$.
Denote by $G_{1}$ the odd component of $G-X_{e}^{u}$ containing $u$.
Let $G_{2}=G-X_{e}^{u}-V(G_{1})$ (as shown in Fig. \ref{tu-main1} (a)).
Since both $|V(G_1)|$ and $|X_{e}^{u}|$ are odd, $|V(G_{2})|$ is even.

\begin{figure}[h]
\centering
\includegraphics[height=5cm,width=10cm]{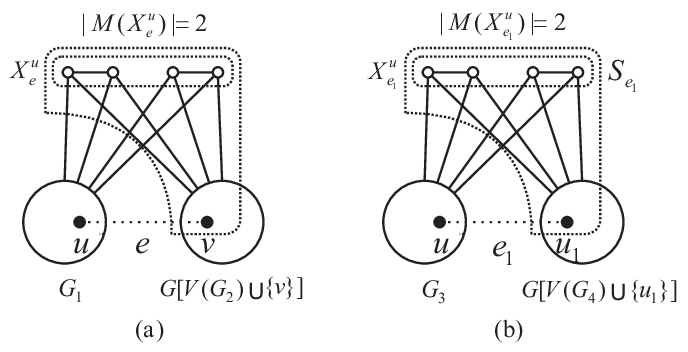}
\caption{\label{tu-main1} (a) $X_{e}^{u}$ satisfies Property $P$;
(b) $X_{e_1}^{u}$ satisfies Property $P$.}
\end{figure}

Since $|V(G_{1})|>1$, $N(u)\cap V(G_1)\neq \emptyset$.
Let $u_1\in N(u)\cap V(G_1)$ and $e_1=uu_1$.
Since $v\in N(u)$ and $v\notin N(u_1)$, $N(u) \backslash \{u_1\} \neq N(u_1)\backslash \{u\}$.
By Propositions \ref{prop1} and \ref{prop3}, $e_1$ is of type 1.
Then there exists $S_{e_1} \subseteq V(G)\backslash \{u, u_1\}$ with $|M(S_{e_1})|=2$
such that $G-e_1-S_{e_1}$ has exactly two odd components containing $u$ and $u_1$, respectively.
Let $X_{e_1}^{u}=S_{e_1} \cup \{u_1\}$.
By Lemma \ref{lem2}, $X_{e_1}^{u}$ satisfies Property $P$.
Let $G_{3}$ be the odd component of $G-X_{e_1}^{u}$ containing $u$ and
$G_{4}=G-X_{e_1}^{u}-V(G_{3})$ (as shown in Fig. \ref{tu-main1} (b)).
Since $G[V(G_4)\cup \{u_1\}]$ is an odd component of $G-e_1-S_{e_1}$,
$|V(G_{4})|$ is even.

Now we have two different partitions of $V(G)$ regarding $X_{e}^{u}$
and $X_{e_1}^{u}$, respectively. One is $\{V(G_1), X_{e}^{u}, V(G_2)\}$
and the other is $\{V(G_3), X_{e_1}^{u}, V(G_4)\}$.
We view the two partitions together as shown in Fig. \ref{tu-main2}.
For convenience, we introduce several notations.
Let $T=X_{e}^{u} \cap X_{e_1}^{u}$, $X_1=X_{e}^{u} \cap V(G_{3})$, $X_2=X_{e}^{u} \cap V(G_{4})$,
$Y_1=X_{e_1}^{u} \cap V(G_{1})$ and $Y_2=X_{e_1}^{u}\cap V(G_{2})$.
Then $X_{e}^{u}=X_1 \cup X_2 \cup T$ and $X_{e_1}^{u}=Y_1 \cup Y_2 \cup T$.
Let $H_{0}=V(G_{1})$ $\cap$ $V(G_{3})$, $H_{1}=V(G_{1}) \cap V(G_{4})$,
$H_{2}=V(G_{2}) \cap V(G_{4})$ and $H_{3}=V(G_{2}) \cap V(G_{3})$.
So $u \in V(G_{1}) \cap V(G_{3})=H_{0}$.
Since $u_1 \in V(G_{1})$ and $u_1\in X_{e_1}^{u}$, $u_1\in X_{e_1}^{u} \cap V(G_{1})=Y_1$.
Moreover, $v \in X_1 \cup T$ as $v\in X_{e}^{u}\cap N(u)$.

\begin{figure}[h]
\centering
\includegraphics[height=4cm,width=4.7cm]{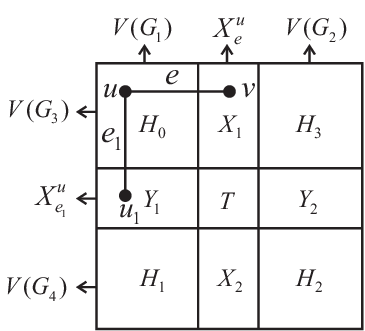}
\caption{\label{tu-main2} Illustration of the partition of $V(G)$
with respect to $X_{e}^{u}$ and $X_{e_1}^{u}$.}
\end{figure}

If $N(u) \cap H_{0}\neq \emptyset$, let $u_0 \in N(u) \cap H_{0}$.
Since $N(v) \cap V(G_{1})=\{u\}$ and $N(u_1)$ $\cap$ $V(G_{3})=\{u\}$,
$\{vu_0, vu_1, u_0u_1\} \cap E(G)=\emptyset$.
Then $G[\{u, v, u_0, u_1\}]$ is a claw, a contradiction. Thus
\begin{equation}\label{30}
N(u) \subseteq X_1 \cup Y_1 \cup T.
\end{equation}
By the hypothesis that $\delta(G) \geq 6$, we deduce that
\begin{equation}\label{31}
|X_1|+|Y_1|+|T|\geq d_{G}(u) \geq 6.
\end{equation}
Combining with $|X_{e}^{u}|+|X_{e_1}^{u}|=10$, we obtain that
\begin{equation}\label{32}
|X_2|+|Y_2|+|T| \leq 4.
\end{equation}
By (\ref{32}), $|Y_1|+|Y_2|+|T|=5>|X_2|+|Y_2|+|T|$.
It implies that
\begin{equation}\label{032}
|Y_1|>|X_2|.
\end{equation}
Furthermore, by (\ref{31}), $|X_1|+|Y_1|+|T|\geq 6>|Y_1|+|Y_2|+|T|$. Then
\begin{equation}\label{033}
|X_1|>|Y_2|.
\end{equation}

\begin{claim}\label{claim1}
$H_{2} \neq \emptyset$.
\end{claim}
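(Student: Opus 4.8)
\textbf{Proof proposal for Claim \ref{claim1}.}

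The plan is to argue by contradiction: assume $H_2=\emptyset$ and derive a contradiction with the minimality of the odd component $G_1$ that was used to select $X_e^u$. The intuition is that $H_2=V(G_2)\cap V(G_4)$ being empty forces $V(G_2)$ to sit entirely inside $X_{e_1}^u\cup V(G_3)$, i.e.\ $V(G_2)=Y_2\cup H_3$, which severely restricts how the two partitions interact. Concretely, under $H_2=\emptyset$ we have $V(G_4)=H_1\cup X_2$ (since $V(G_4)=(V(G_1)\cap V(G_4))\cup(X_e^u\cap V(G_4))\cup(V(G_2)\cap V(G_4))$ and the last piece is empty), so $V(G_4)\subseteq V(G_1)\cup X_e^u$. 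Because $G[V(G_4)\cup\{u_1\}]$ is one of the odd components of $G-e_1-S_{e_1}$ and $u_1\in Y_1\subseteq V(G_1)$, the set $V(G_4)\cup\{u_1\}$ is connected and avoids $X_{e_1}^u\setminus\{u_1\}$; combined with $V(G_4)\subseteq V(G_1)\cup X_e^u$ this should let me locate a small vertex cut separating part of $V(G_1)$ from $u$.

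The key steps I would carry out, in order, are: (1) Record that $H_2=\emptyset$ gives $V(G_4)=H_1\cup X_2$ and hence $V(G_2)=H_3\cup Y_2$, and note $u_1\in Y_1$, $u\in H_0$. (2) Use inequality (\ref{032}), $|Y_1|>|X_2|$, together with inequality (\ref{31}) to pin down the small sizes: since $|X_1|+|Y_1|+|T|\geq 6$ and all five-element sets $X_e^u,X_{e_1}^u$ partition as indicated, deduce tight bounds such as $|X_2|+|Y_2|+|T|\le 4$ and hence $|X_2|$ is small (likely $|X_2|\le 1$, forcing $|Y_1|\ge 2$). (3) Consider the vertex set $Z=X_1\cup Y_2\cup X_2\cup T$ (a subset of $X_e^u\cup X_{e_1}^u$ of size at most, roughly, $|X_e^u|=5$ after accounting for overlaps) and argue that $Z$, or a suitable subset of it of size $\le 4$, separates $H_1$ (a nonempty chunk of $V(G_1)$, nonempty because $|Y_1|\ge 2$ means $V(G_1)$ is large enough, or because otherwise $V(G_1)$ would be too small) from the rest — contradicting that $G$ is $3$-connected, or more precisely contradicting Theorem \ref{P} which gives $(k+1)$-connectivity $=3$-connectivity. (4) Alternatively, and this is the cleaner route, show that if $H_2=\emptyset$ then $X_{e_1}^u$ induces on $G_1$ a smaller Property-$P$ cut: the component $G_3$ of $G-X_{e_1}^u$ containing $u$ satisfies $V(G_3)=H_0\cup X_1\cup H_3$ (using $H_1\subseteq V(G_4)$), and one checks $V(G_3)\cap V(G_1)=H_0\subsetneq V(G_1)$ because $H_1=V(G_1)\cap V(G_4)\ne\emptyset$; then exhibit a $5$-vertex cut with $u$ in an odd component of order $|H_0|<|V(G_1)|$, contradicting the minimal choice of $G_1$.

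I would pursue route (4) as the main line and keep (3) as backup. The main obstacle will be step (4)'s bookkeeping: verifying that the candidate smaller cut actually has size exactly $5$, has matching number $2$, that the component of $u$ is genuinely odd (parity counting via $|V(G_1)|$, $|V(G_3)|$, $|V(G_4)|$ all odd/even as already established), and that the Property-$P$ condition (\romannumeral4) — the unique edge from the component of $u$ to the partner vertex — transfers correctly. A subtle point is ensuring $H_1\ne\emptyset$ under $H_2=\emptyset$: if both $H_1$ and $H_2$ were empty then $V(G_1)\cup V(G_2)\subseteq$ (the two cuts), forcing $|V(G)|$ too small relative to $\delta(G)\ge 6$, so I would dispose of the degenerate case $H_1=\emptyset$ first using the degree bound, then handle $H_1\ne\emptyset$. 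Claw-freeness may also be needed to rule out stray edges between $H_1$ and $H_3$ or between $H_0$ and $H_1$ that would spoil the separation, invoking the same kind of $G[\{u,v,u_0,u_1\}]$-is-a-claw argument used to derive (\ref{30}).
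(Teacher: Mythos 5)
Your overall strategy (contradict the minimality of $G_1$ via a smaller Property-$P$ cut) is the right one and is indeed what the paper does, but your main route (4) has a genuine gap: you target the wrong side of the partition. Under $H_2=\emptyset$ the component of $G-X_{e_1}^{u}$ containing $u$ is $G_3$ with $V(G_3)=H_0\cup X_1\cup H_3$, \emph{not} $H_0$, and there is no reason for $|V(G_3)|$ to be smaller than $|V(G_1)|$; the fact that $V(G_3)\cap V(G_1)=H_0\subsetneq V(G_1)$ says nothing about the order of the component itself. Nor can you manufacture any $5$-vertex cut whose odd component containing $u$ has order $|H_0|$: every neighbour of $u$ lies in $X_1\cup Y_1\cup T$ by (\ref{30}), so such a cut would have to contain all of $N(u)$, and $|X_1\cup Y_1\cup T|\geq 6$ by (\ref{31}), exceeding the size $5$ demanded by Property $P$. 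Your backup route (3) also misfires: a separating set of size at most $4$ does not contradict $3$-connectedness (only a cut of size at most $2$ would), so that line gives no contradiction either. Finally, the case distinction on $H_1=\emptyset$ and the appeal to claw-freeness are unnecessary for this claim.

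The paper's proof switches to the \emph{other} odd component of $G_{e_1}$. If $H_2=\emptyset$ then $V(G_4)=H_1\cup X_2$, so $G[V(G_4)\cup\{u_1\}]=G[H_1\cup X_2\cup\{u_1\}]$ is connected and odd. Taking the cut $(X_{e_1}^{u}\setminus\{u_1\})\cup\{u\}=S_{e_1}\cup\{u\}$ (i.e.\ Property $P$ applied to the edge $e_1=uu_1$ with the roles of $u$ and $u_1$ exchanged), one checks it has size $5$, matching number $2$, that $u_1$ lies in the odd component $G[H_1\cup X_2\cup\{u_1\}]$, and that $u$ sends only the edge $uu_1$ into it (again by (\ref{30})). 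Then (\ref{032}), $|Y_1|>|X_2|$, together with $u\in H_0$ gives $|H_1|+|X_2|+1\leq |H_1|+|Y_1|<|H_0|+|H_1|+|Y_1|=|V(G_1)|$, contradicting the minimal choice of $G_1$. So the missing idea is precisely this hand-off to the component containing $u_1$ and the direct use of $|Y_1|>|X_2|$ in the size comparison, rather than trying to isolate $H_0$ around $u$.
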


\noindent Proof of Claim \ref{claim1}.
Suppose to the contrary that $H_{2}=\emptyset$. Then $V(G_{4})=H_{1} \cup X_2$.
So $G[V(G_{4})\cup \{u_1\}]=G[H_{1} \cup X_2 \cup \{u_1\}]$.
Since $G[V(G_{4})\cup \{u_1\}]$ is an odd component of $G_{e_1}$,
$G[H_{1} \cup X_2 \cup \{u_1\}]$ is connected. For the edge $uu_1$,
there is a $5$-vertex cut $(X_{e_1}^{u} \backslash \{u_1\}) \cup \{u\}$
such that $G[H_{1} \cup X_2 \cup \{u_1\}]$ is an odd component
of $G-((X_{e_1}^{u} \backslash \{u_1\}) \cup \{u\})$ containing $u_1$.
Moreover, $u$ has the unique neighbor $u_1$ in $G[H_{1} \cup X_2 \cup \{u_1\}]$
and $|M((X_{e_1}^{u} \backslash \{u_1\}) \cup \{u\})|=2$.
So $(X_{e_1}^{u} \backslash \{u_1\}) \cup \{u\}$ satisfies Property $P$.
By (\ref{032}),
$|V(G[H_{1} \cup X_2 \cup \{u_1\}])| \leq |V(G[H_{1} \cup Y_1])|<|V(G_{1})|$,
a contradiction to the minimality of $V(G_{1})$.
\hfill $\square$

\begin{claim}\label{claim2}
$X_2 \neq \emptyset$.
\end{claim}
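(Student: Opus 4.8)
The plan is to mirror the argument already used for Claim~\ref{claim1} but in the ``other direction,'' exploiting the symmetry between the roles of $X_{e}^{u}$ and $X_{e_1}^{u}$ together with inequality~(\ref{033}) in place of~(\ref{032}). Suppose for contradiction that $X_2=\emptyset$. Then $V(G_3)=H_0\cup X_1$, so $G[V(G_3)]=G[H_0\cup X_1]$ is precisely the odd component $G_3$ of $G-X_{e_1}^{u}$ containing $u$. In particular $G[H_0\cup X_1]$ is connected. Now I would recover a $5$-vertex cut witnessing Property~$P$ for a suitable edge incident with $u$: since $v\in X_1\cup T$ and (by Property~$P$ for $X_{e}^{u}$) $E(V(G_1),\{v\})=\{uv\}$, the vertex $v$ has its unique neighbor in $G_1$ equal to $u$; I want to transfer this to the component $G[H_0\cup X_1]$.

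The key step is to check that $X_{e_1}^{u}$ itself—or a small modification of it—already serves. Recall $X_{e_1}^{u}=Y_1\cup Y_2\cup T$ has size $5$ with $|M(X_{e_1}^{u})|=2$, and $G-X_{e_1}^{u}$ has $G_3=G[H_0\cup X_1]$ as the odd component containing $u$, with $u_1\in Y_1$ the neighbor realizing $E(V(G_3),\{u_1\})=\{uu_1\}$. So $X_{e_1}^{u}$ already satisfies Property~$P$ with respect to the edge $e_1=uu_1$ and the odd component $G_3=G[H_0\cup X_1]$. The point of assuming $X_2=\emptyset$ is that it forces $V(G_3)=H_0\cup X_1$, and then I can bound the order of this odd component: $|V(G_3)|=|H_0|+|X_1|\le |H_0|+|X_1\cup Y_2\cup\{\text{??}\}|$. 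More cleanly, under $X_2=\emptyset$ we have $X_{e}^{u}=X_1\cup T$, so $|X_1|+|T|=5$, hence $|X_1|=5-|T|$; and $|V(G_1)|=|H_0|+|H_1|$ while $|V(G_3)|=|H_0|+|X_1|$. Using~(\ref{033}), $|X_1|>|Y_2|$ does not immediately give $|V(G_3)|<|V(G_1)|$, so the real work is to locate the right competing cut.

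The honest approach is the exact analogue of Claim~\ref{claim1}: under $X_2=\emptyset$, consider the edge $uu_1$ and the cut $(X_{e}^{u}\setminus\{v\})\cup\{u_1\}$? No—rather, I would argue as follows. With $X_2=\emptyset$, $G[V(G_3)\cup\{?\}]$ plays the role that $G[V(G_4)\cup\{u_1\}]$ played before. Since $G_3=G[H_0\cup X_1]$ is connected and contains $u$, and since $u$'s neighbors satisfy~(\ref{30}), i.e.\ $N(u)\subseteq X_1\cup Y_1\cup T$, the vertex set $X_{e_1}^{u}$ separates $u$ from $G_4$. The minimality hypothesis says $G_1$ is the \emph{smallest} odd component across all Property-$P$ cuts for $u$; but $G_3$ is the odd component of $G-X_{e_1}^{u}$ containing $u$, so minimality already forces $|V(G_3)|\ge |V(G_1)|$, i.e.\ $|H_0|+|X_1|\ge |H_0|+|H_1|+|Y_1|$, giving $|X_1|\ge |H_1|+|Y_1|\ge |Y_1|$. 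Combined with $|X_1|+|T|=5$ (from $X_2=\emptyset$) and $|Y_1|+|Y_2|+|T|=5$, this yields $|Y_2|\ge |X_1|\ge |Y_1|$, which together with~(\ref{033}) ($|X_1|>|Y_2|$) is the contradiction: $|X_1|>|Y_2|\ge|X_1|$.

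The main obstacle, and the part needing care, is bookkeeping the exact size relations: one must be sure that ``$X_2=\emptyset$'' genuinely forces $V(G_3)=H_0\cup X_1$ (so that $G_3$ is a \emph{bona fide} odd component of the Property-$P$ cut $X_{e_1}^{u}$, with the correct single-neighbor condition $E(V(G_3),\{u_1\})=\{uu_1\}$ inherited from $X_{e_1}^{u}$ satisfying Property~$P$), and then that the minimality of $|V(G_1)|$ applies to $G_3$. The inequality chain then closes against~(\ref{033}). I expect the write-up to be only two or three sentences once the right competing cut is identified; the subtlety is purely in verifying that all four clauses of Property~$P$ transfer, especially clause~(\romannumeral4) about the unique neighbor of $u_1$ in the odd component.
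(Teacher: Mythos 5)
There is a genuine gap, in fact two. First, your key identification is wrong: assuming $X_2=\emptyset$ does \emph{not} give $V(G_3)=H_0\cup X_1$. In the notation of the paper, $V(G_3)=H_0\cup H_3\cup X_1$, and the hypothesis $X_2=\emptyset$ constrains $G_4$ (namely $V(G_4)=H_1\cup H_2$), not $G_3$; it says nothing about $H_3=V(G_2)\cap V(G_3)$. Second, even if one grants that identification, your arithmetic does not close. The comparison $|V(G_3)|\geq|V(G_1)|$ coming from the minimal choice of $X_{e}^{u}$ holds \emph{unconditionally} (since $X_{e_1}^{u}$ satisfies Property $P$ by Lemma \ref{lem2}), so by itself it cannot produce a contradiction specific to $X_2=\emptyset$; and from $X_2=\emptyset$ one gets $|X_1|+|T|=5=|Y_1|+|Y_2|+|T|$, i.e.\ $|X_1|=|Y_1|+|Y_2|$, which together with $u_1\in Y_1$ forces $|Y_2|=|X_1|-|Y_1|<|X_1|$. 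That is perfectly consistent with (\ref{033}), so the claimed chain $|Y_2|\geq|X_1|\geq|Y_1|$ and the contradiction with $|X_1|>|Y_2|$ simply do not follow.

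The paper's argument is much more elementary and goes through $G_4$, not $G_3$: if $X_2=\emptyset$, then $G[V(G_4)\cup\{u_1\}]=G[H_1\cup H_2\cup\{u_1\}]$, and this graph must be connected because $G[V(G_4)\cup\{u_1\}]$ is an odd component of $G_{e_1}$. But $H_1\cup\{u_1\}\subseteq V(G_1)$ and $H_2\subseteq V(G_2)$, and $X_{e}^{u}$ separates $V(G_1)$ from $V(G_2)$, so there is no edge between these two nonempty parts ($H_2\neq\emptyset$ by Claim \ref{claim1}), a contradiction. No minimality of $|V(G_1)|$ and no size counting are needed; if you want to salvage your approach you would have to control $H_3$ as well, which your proposal never does.
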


\noindent Proof of Claim \ref{claim2}. Suppose to the contrary that $X_2=\emptyset$.
Then $G[V(G_{4}) \cup \{u_1\}]=G[H_{1} \cup H_{2} \cup \{u_1\}]$.
Since $G[V(G_{4}) \cup \{u_1\}]$ is an odd
component of $G_{e_1}$, $G[V(G_{4}) \cup \{u_1\}]$ is connected.
But $G[H_{1} \cup H_{2} \cup \{u_1\}]$ is disconnected as there is no path connecting
$H_{1} \cup \{u_1\}$ and $H_{2}$ through a vertex in $X_2$, a contradiction.
\hfill $\square$

\begin{claim}\label{claim401}
If $N(u_1)\cap H_1=\emptyset$ $($resp. $N(v)\cap H_3=\emptyset$$)$,
then $|X_2|+|Y_1|+|T|\geq 6$ $($resp. $|X_1|+|Y_2|+|T|\geq 6$$)$.
\end{claim}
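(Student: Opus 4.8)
\textbf{Proof proposal for Claim \ref{claim401}.}

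The plan is to argue by contradiction, exploiting the minimality of $|V(G_1)|$ among odd components arising from $5$-vertex cuts satisfying Property $P$, exactly as in the proof of Claim \ref{claim1}. Suppose $N(u_1)\cap H_1=\emptyset$; by symmetry the argument for $N(v)\cap H_3=\emptyset$ will be identical with the roles of $u$ and $v$ (equivalently of the two partitions) interchanged. Recall that $G[V(G_4)\cup\{u_1\}]$ is an odd component of $G_{e_1}=G-e_1-S_{e_1}$, hence connected, and that $V(G_4)=H_1\cup H_2\cup X_2$ with $u_1$ joined only inside $X_{e_1}^u$. The key structural observation is that under the hypothesis $N(u_1)\cap H_1=\emptyset$, the vertex $u_1$ attaches to the component $G[V(G_4)\cup\{u_1\}]$ only through $X_2\cup H_2$; more importantly I want to cut $H_1$ away from the rest. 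Since there is no edge of $G$ between $V(G_1)$ and $V(G_2)$ (they are separated by $X_e^u$), $H_1$ has neighbors only in $X_1\cup Y_1\cup T$ (the part of $X_e^u$ and $X_{e_1}^u$ meeting $V(G_1)$) together with $H_0\cup\{u_1\}$; and $u_1\notin N(H_1)$ by hypothesis.

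The heart of the argument is to produce a new $5$-vertex cut satisfying Property $P$ whose associated odd component containing $u$ (or $u_1$, after relabelling) is strictly smaller than $G_1$, unless the claimed inequality holds. Concretely: consider the set $Z=X_2\cup Y_1\cup T$. I claim $Z$ separates $H_1\cup H_0\cup\{u\}$-type vertices appropriately. Indeed $u\in H_0$ and its neighbors lie in $X_1\cup Y_1\cup T$ by \eqref{30}; and $X_1\subseteq X_e^u\cap V(G_3)$, so an edge from $u$ into $X_1$ stays inside $G[V(G_3)]$. Working inside $G[V(G_3)]=G[H_0\cup H_3\cup X_1]$ (with $X_1$ playing a cut-role) and using that $G[V(G_3)]$ is the odd component $G_3$ from the Property-$P$ cut $X_{e_1}^u$, I can localize $u$: the set $(X_{e_1}^u\setminus\{u_1\})\cup\{u_1\}=X_{e_1}^u$ itself already has $G_3$ as its odd component containing $u$. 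The real move, mirroring Claim \ref{claim1}, is: if $N(u_1)\cap H_1=\emptyset$ then $G[H_1\cup (\text{appropriate piece})]$ together with the $5$-set $T\cup Y_1\cup X_2$ (which has size $|X_2|+|Y_1|+|T|$) becomes a candidate Property-$P$ configuration for the edge $uu_1$ or for an edge out of $u$ into $H_1$; for this $5$-vertex-cut property to \emph{fail} — which it must, by minimality of $|V(G_1)|$ and a cardinality comparison showing the new component would be smaller (using $|Y_1|>|X_2|$ from \eqref{032}, so $H_1\cup X_2\cup\{u_1\}$ is no larger than $V(G_1)$) — the only escape is that $|X_2\cup Y_1\cup T|=|X_2|+|Y_1|+|T|\neq 5$, forcing $|X_2|+|Y_1|+|T|\geq 6$ (it cannot be $\le 4$ because that would contradict connectivity/degree bounds, e.g. $\delta(G)\ge 6$ applied to a vertex of $H_1$ whose neighbors lie in $(X_1\cap\ldots)\cup Y_1\cup T\cup H_0$, combined with the separation).

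More carefully, here is the route I would write out. Assume for contradiction $N(u_1)\cap H_1=\emptyset$ \emph{and} $|X_2|+|Y_1|+|T|\le 5$. Since $G$ is $(k+1)$-connected with $k=2$, i.e. $3$-connected (Theorem \ref{P}, via $2$-extendability), and since $G[V(G_4)\cup\{u_1\}]$ is connected while $H_1$ meets $V(G_4)\cup\{u_1\}$ only through $X_2$ (no $H_1$–$H_2$ edges as those would be $V(G_1)$–$V(G_2)$ edges, and no $H_1$–$u_1$ edge by hypothesis), connectivity of $G[V(G_4)\cup\{u_1\}]$ forces $H_1=\emptyset$ or $X_2$ to carry all the attachment; but if $H_1=\emptyset$ then $V(G_1)=H_0\cup Y_1$ and one reruns the Claim \ref{claim1}-style argument with the cut $(X_e^u\setminus\{v\})$-analogue to contradict minimality of $V(G_1)$ (using $|X_1|>|Y_2|$, so $G_3$-side component $H_0\cup X_1$ would be no larger), a contradiction. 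So $H_1\neq\emptyset$. Now every vertex $h\in H_1$ has $N(h)\subseteq H_0\cup H_1\cup X_1\cup Y_1\cup T$ (it lies in $G_1$, so no $V(G_2)$-neighbors; it lies in $G_4$, so its neighbors outside $G_4$ lie in $X_{e_1}^u=Y_1\cup Y_2\cup T$, and $Y_2\subseteq V(G_2)$ is unreachable from $H_1\subseteq V(G_1)$; hence $N(h)\cap(\text{outside }G_1)\subseteq Y_1\cup T$, while inside $G_1$ the relevant cut part is $X_1$). Thus $G[H_0\cup H_1]$ is separated from the rest of $G$ by $X_1\cup Y_1\cup T\subseteq X_e^u\cup X_{e_1}^u$, whose size is $|X_1|+|Y_1|+|T|$. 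But now consider instead the odd component containing $u$ inside $G-(X_2\cup Y_1\cup T \cup\{\text{suitable extra vertex}\})$: since $u$'s neighbors are in $X_1\cup Y_1\cup T$ by \eqref{30} and $X_1\subseteq V(G_3)$, removing $Y_1\cup T$ plus the $X_1$-boundary isolates the $V(G_3)$-piece; I pick the $5$-set to be $T\cup Y_1\cup X_2$ if $|T|+|Y_1|+|X_2|=5$, obtaining a Property-$P$ cut (conditions (i)–(iv) of Definition \ref{def2} are checked just as in Claim \ref{claim1}: the matching bound $|M|=2$ comes from $|M(X_{e_1}^u)|=2$ together with $|Y_1|>|X_2|$) whose $u$-component is contained in $H_0\cup H_3\cup X_1$, of order $\le |V(G_3)|-|Y_1|-|T|$; comparing with $|V(G_1)|=|H_0|+|H_1|+|Y_1|$ and using $H_1\neq\emptyset$ plus the cardinality inequalities \eqref{032}, \eqref{033}, this component is strictly smaller than $V(G_1)$, contradicting the choice of $X_e^u$. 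Hence $|X_2|+|Y_1|+|T|\ge 6$.

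The main obstacle I anticipate is verifying that the replacement set $T\cup Y_1\cup X_2$ (or whichever $5$-set one settles on) genuinely satisfies all four conditions of Property $P$ — in particular condition (i), the requirement $|M(\,\cdot\,)|=2$, and condition (iv), that $u$ has exactly one neighbor into the relevant odd component. The matching condition will follow from the fact that $M(X_{e_1}^u)=\{$two edges$\}$ lies appropriately across $Y_1,Y_2,T$ and from $|Y_1|>|X_2|$ (so the new set is "at least as matchable" on the $Y_1\cup T$ part), but pinning down that no spurious third independent edge appears in $G[T\cup Y_1\cup X_2]$ may require a small claw-freeness argument (three pairwise-disjoint edges inside a $5$-set plus a common outside neighbor would be impossible only after noting the outside-neighbor structure — this is where I'd use $\delta(G)\ge 6$ and the already-established $E(V(G_1),V(G_2))=\emptyset$). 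The rest is bookkeeping with the Venn-diagram partition $\{H_0,H_1,H_2,H_3,X_1,X_2,Y_1,Y_2,T\}$ of Fig. \ref{tu-main2} and the inequalities \eqref{30}–\eqref{033} already in hand.
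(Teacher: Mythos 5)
There is a genuine gap, and also a missed shortcut: the paper proves Claim \ref{claim401} by a two-line degree count at $u_1$, with no appeal to the minimality of $|V(G_1)|$ at all. Since $u_1\in Y_1\subseteq V(G_1)$ and $X_{e}^{u}$ separates $G_1$ from $G_2$, $u_1$ has no neighbor in $V(G_2)=H_2\cup H_3\cup Y_2$; and Property $P$ (iv) for the cut $X_{e_1}^{u}$ gives $N(u_1)\cap V(G_3)=\{u\}$. Hence $N(u_1)\subseteq\{u\}\cup H_1\cup X_2\cup (Y_1\setminus\{u_1\})\cup T$, so under the hypothesis $N(u_1)\cap H_1=\emptyset$ one gets $d_G(u_1)\leq |X_2|+|Y_1|+|T|$, and $\delta(G)\geq 6$ forces $|X_2|+|Y_1|+|T|\geq 6$; the statement for $v$ and $H_3$ is symmetric. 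Your proposal does contain the germ of this (you note that vertices of $V(G_1)\cap V(G_4)$ have all neighbors in $H_0\cup H_1\cup X_1\cup Y_1\cup T$), but you apply the degree bound to an arbitrary vertex of $H_1$ rather than to $u_1$ itself, which is the only vertex whose neighborhood is pinned down tightly enough (via Property $P$ (iv)) to yield the stated inequality.

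The route you actually pursue --- assuming $|X_2|+|Y_1|+|T|\leq 5$ and deriving a contradiction with the minimality of $|V(G_1)|$ by exhibiting a new $5$-vertex cut (such as $T\cup Y_1\cup X_2$) satisfying Property $P$ --- is not carried to completion and several of its steps do not hold as stated. You never verify that your candidate set has exactly five vertices, is a vertex cut, that $u$ (or the relevant vertex) lies in an \emph{odd} component of its removal, or that conditions (i) and (iv) of Definition \ref{def2} hold; in particular, the assertion that $|M(T\cup Y_1\cup X_2)|=2$ ``comes from $|M(X_{e_1}^{u})|=2$ together with $|Y_1|>|X_2|$'' is not a valid inference, since a maximum matching of $G[T\cup Y_1\cup X_2]$ bears no direct relation to one of $G[X_{e_1}^{u}]$ after exchanging $Y_2$ for $X_2$. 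Likewise the size comparison ``of order $\leq |V(G_3)|-|Y_1|-|T|$'' is meaningless ($Y_1$ and $T$ are disjoint from $V(G_3)$), and the strict inequality against $|V(G_1)|$ is never established. You flag these points yourself as ``anticipated obstacles,'' but they are exactly the content that would have to be supplied, and none of it is needed: the claim is a pure cardinality statement about $N(u_1)$ and follows immediately from $\delta(G)\geq 6$ together with (\ref{30})-type confinement of $N(u_1)$.
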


\noindent Proof of Claim \ref{claim401}.
Suppose to the contrary that $|X_2|+|Y_1|+|T|<6$.
By hypothesis,
$N(u_1)\subseteq \{u\}\cup X_2\cup (Y_1\backslash \{u_1\})\cup T$.
That is, $d_G(u_1)<6$, contradicting the assumption that $\delta(G)\geq 6$.
Similarly, $|X_1|+|Y_2|+|T|\geq 6$.
\hfill $\square$

\begin{claim}\label{claim003}
For every edge in $M(X_{e}^{u})$, at least one end-vertex of it has a neighbor in $V(G_1)$.
\end{claim}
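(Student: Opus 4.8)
The claim concerns $M(X_{e}^{u})$, the maximum matching in $G[X_{e}^{u}]$, where $X_{e}^{u}$ is the distinguished $5$-vertex cut satisfying Property $P$ and $v\in X_{e}^{u}$. Since $X_{e}^{u}=S_{uv}\cup\{v\}$ for the type-1 edge $e=uv$, we have $|M(X_{e}^{u})|=2$, so $M(X_{e}^{u})$ consists of two disjoint edges $f_1,f_2$ inside $X_{e}^{u}$, and exactly one vertex of $X_{e}^{u}$ is left unmatched.

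\medskip
\textbf{Proof of Claim \ref{claim003}.} Suppose to the contrary that some edge of $M(X_{e}^{u})$, say $f_1=pq$, has neither $p$ nor $q$ with a neighbor in $V(G_1)$. I would first observe that then $f_2=p'q'$ together with $f_1$ still matches four of the five vertices of $X_{e}^{u}$, and the remaining unmatched vertex of $X_{e}^{u}$, call it $z$, satisfies $z\ne v$ unless $v$ happens to be unmatched; I would treat the location of $v$ within $X_{e}^{u}$ carefully, but in every case $M(X_{e}^{u})\setminus\{f_1\}=\{f_2\}$ is a single edge and hence a $1$-matching. The key point is that deleting $f_1$ from the two-edge matching still leaves the configuration with $u$ trapped. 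Precisely, since $u\in V(G_1)$ and $E(V(G_1),\{v\})=\{uv\}$ by Property~$P$, and since $G_1$ is an odd component of $G-X_{e}^{u}$, I want to exhibit a $2$-matching $N$ of $G$ with $V(N)\cap V(G_1)=\emptyset$ such that $G-V(N)$ still has an odd component, then invoke Lemma \ref{next1}.

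\medskip
To do this, take the $1$-matching $\{f_2\}$ inside $X_{e}^{u}$; it covers exactly two of the five cut vertices. The three remaining cut vertices include $p,q$ (the ends of $f_1$) and possibly others. Because $\delta(G)\ge 6$ and $G$ is $3$-connected, and because $p,q$ have no neighbor in $V(G_1)$ but $G_1$ is nontrivial with $u_1\in N(u)\cap V(G_1)$, I would show one can extend $\{f_2\}$ by one more independent edge $g$ incident to $u$ (for instance $g=uv$ if $v\notin V(f_2)$, which holds as $v$ has no neighbor in $V(G_1)\setminus\{u\}$ and $f_2\subseteq X_{e}^{u}$ so $v\in V(f_2)$ is possible only if $f_2=vq'$; in that subcase use $g=uu_1$ instead, noting $u_1\notin X_{e}^{u}$). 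Then $N=\{f_2,g\}$ is a $2$-matching; after removing $V(N)$, the vertex set $V(G_1)\setminus V(g)$ still induces a connected subgraph (since only one vertex of $G_1$ was removed and $G_1$ is connected of odd order, so $G_1-($one vertex$)$ has even order — here I need to take $g$ so that it removes \emph{zero or two} vertices of $G_1$, e.g. $g=uv$ removes only $u$ from $G_1$; then $V(G_1)\setminus\{u\}$ has even order but might be disconnected). The cleaner route, which I would actually pursue, is: if neither end of $f_1$ sees $V(G_1)$, then $p$ and $q$ are ``wasted'' and the true obstruction is that $N(u)\subseteq X_1\cup Y_1\cup T$ already forces $u$'s neighbors into at most three of the five cut vertices; combined with $p,q$ being two of the other cut vertices, the edge $f_1$ can be \emph{removed} and replaced: $\{f_2, uv\}$ (or $\{f_2, uu_1\}$) is a $2$-matching whose deletion leaves $O_1-u$ (respectively $O_1$), an odd-order set; I then check this set spans a connected subgraph because $p,q\notin N(V(G_1))$ means $G_1$'s only external attachments were through the matched vertices and $v$, so $G_1-u$ (even order) could split — at which point I argue by parity that \emph{some} component of $G-V(N)$ is odd since the total order is even and $V(N)$ is even. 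This parity argument is the safe finish: $|V(G)|$ is even, $|V(N)|=4$, so $G-V(N)$ has even order; if $V(G_1)\setminus V(N)$ decomposes into components none of which is odd, then all of $G-V(N)$ is even and no contradiction arises directly — so I must instead pin down that $G[V(G_1)\setminus\{u\}]$ (when $g=uv$) has an odd component.

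\medskip
\textbf{Main obstacle.} The delicate part is ensuring that after deleting the chosen $2$-matching, an \emph{odd} component genuinely survives; this needs the precise structure of Property~$P$ (namely $E(V(G_1),\{v\})=\{uv\}$ and $G_1$ odd) together with the assumption that $f_1$'s endpoints avoid $V(G_1)$, so that all edges leaving $V(G_1)$ go to $X_{e}^{u}$ and, after we match $f_2$ and pull $u$ out via $uv$ or $uu_1$, the odd component $O_1$ of the original $G_e$ (or a suitable induced odd piece of it) is recovered intact. Once an odd component is produced, Lemma \ref{next1} gives that $G$ is not $2$-extendable, the desired contradiction. Hence both edges of $M(X_{e}^{u})$ have an end-vertex with a neighbor in $V(G_1)$ — in particular at least one does, as claimed. \hfill $\square$
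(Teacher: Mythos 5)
Your proposal does not close: you never actually produce the odd component that Lemma \ref{next1} needs, and you say so yourself in the ``main obstacle'' paragraph. Both of your candidate second edges run \emph{into} $G_1$: with $g=uv$ you delete $u$, leaving $V(G_1)\setminus\{u\}$ of even order, which may well split into even components only, so parity gives nothing; with $g=uu_1$ (the subcase $v\in V(f_2)$) you delete two vertices of $G_1$, but then the fifth, unmatched vertex of $X_{e}^{u}$ survives and may still have neighbors in $V(G_1)\setminus\{u,u_1\}$, gluing the leftover of $G_1$ to the rest of the graph, so again no odd component is forced. Your opening instinct --- find a $2$-matching $N$ with $V(N)\cap V(G_1)=\emptyset$ --- was the right one, but you abandoned it.

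The missing idea, which is exactly the paper's proof, is to match $v$ \emph{away from} $G_1$ rather than along $uv$ or $uu_1$. Writing $M(X_{e}^{u})=\{x_1y_1,x_2y_2\}$ and supposing $N(x_1)\cap V(G_1)=N(y_1)\cap V(G_1)=\emptyset$ (note this forces $v\notin\{x_1,y_1\}$, since $uv\in E(G)$), observe that $d_G(v)\geq 6$, $N(v)\cap V(G_1)=\{u\}$ by Property $P$, and $|X_{e}^{u}\setminus\{v\}|=4$, so $v$ has a neighbor $v'\in V(G_2)$. Then $\{x_2y_2,vv'\}$ is a $2$-matching whose vertex set is disjoint from $V(G_1)$; after its removal the only vertices of $X_{e}^{u}$ left are $x_1$ and $y_1$ (and they have no neighbors in $G_1$), so $G_1$ itself --- connected and of odd order --- is an odd component of the resulting graph, and Lemma \ref{next1} contradicts $2$-extendability. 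This one-line choice of $vv'$ is precisely what your construction lacks, so as written the proposal has a genuine gap.
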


\noindent Proof of Claim \ref{claim003}.
Let $M(X_{e}^{u})=\{x_1y_1, x_2y_2\}$.
Suppose that $N(x_1)\cap V(G_1)=\emptyset$ and $N(y_1)\cap V(G_1)=\emptyset$.
Let $v'\in N(v)\cap V(G_2)$. Then $\{x_2y_2, vv'\}$ is
a $2$-matching and the removal of it results in an odd component $G_1$.
By Lemma \ref{next1}, $G$ is not $2$-connected, a contradiction.
\hfill $\square$

\vspace{0.2cm}

If there is a vertex $x$ in $T$ such that at least
three of $H_0$, $H_1$, $H_2$ and $H_3$ contain vertices in $N(x)$,
then $G$ has a claw, a contradiction. This fact yields the following claim.

\begin{claim}\label{claim500}
Every vertex in $T$ has neighbors in at most two of $H_0$, $H_1$, $H_2$
and $H_3$.
\end{claim}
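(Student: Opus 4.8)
The plan is to reduce the claim to claw-freeness of $G$ via a single structural observation: the four quadrant sets $H_0, H_1, H_2, H_3$ are pairwise nonadjacent in $G$. The crucial ingredient is that $X_{e}^{u}$ is a vertex cut separating $V(G_1)$ from $V(G_2)$, so no edge of $G$ joins a vertex of $V(G_1)$ to a vertex of $V(G_2)$; likewise $X_{e_1}^{u}$ separates $V(G_3)$ from $V(G_4)$, so no edge joins $V(G_3)$ to $V(G_4)$. First I would record these two ``no-crossing'' principles, each of which follows immediately from the fact that $G_2=G-X_{e}^{u}-V(G_1)$ collects the components of $G-X_{e}^{u}$ other than $G_1$ (and symmetrically for $G_4$).

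Next I would verify, for each of the six unordered pairs $\{H_i,H_j\}$, that one of the two no-crossing principles forbids an edge between them. Recalling $H_0=V(G_1)\cap V(G_3)$, $H_1=V(G_1)\cap V(G_4)$, $H_2=V(G_2)\cap V(G_4)$ and $H_3=V(G_2)\cap V(G_3)$, the pairs $\{H_0,H_2\}$, $\{H_0,H_3\}$, $\{H_1,H_2\}$, $\{H_1,H_3\}$ place their two members on opposite sides of $X_{e}^{u}$, while $\{H_0,H_1\}$ and $\{H_2,H_3\}$ place them on opposite sides of $X_{e_1}^{u}$. Hence every pair of quadrants is separated by one of the two cuts, so the four sets are pairwise nonadjacent. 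In particular, three vertices chosen from three distinct $H_i$'s are mutually distinct (the quadrants are disjoint) and pairwise nonadjacent.

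With this in hand the claim is immediate. If some $x\in T$ had neighbors in at least three of $H_0,H_1,H_2,H_3$, I would pick one neighbor $h_i\in N(x)\cap H_i$ in each of three distinct quadrants. By the previous paragraph $h_i,h_j,h_k$ are distinct and pairwise nonadjacent, so $G[\{x,h_i,h_j,h_k\}]\cong K_{1,3}$ is an induced claw centered at $x$, contradicting that $G$ is claw-free. Therefore every vertex of $T$ has neighbors in at most two of the quadrants, which is exactly the assertion.

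Since the argument is short, there is no genuine obstacle; the only care needed is the bookkeeping in the second paragraph, confirming that each of the six pairs of quadrants really is cut apart by $X_{e}^{u}$ or by $X_{e_1}^{u}$. Once that tabulation is laid out, the induced claw appears automatically and the claim follows.
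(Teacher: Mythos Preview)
Your proof is correct and follows essentially the same approach as the paper: the paper also argues that a vertex $x\in T$ with neighbors in three of the $H_i$'s would yield an induced claw, since the $H_i$'s are pairwise separated by one of the two cuts $X_e^u$, $X_{e_1}^u$. Your write-up is simply a more detailed spelling-out of the same one-line observation.
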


\begin{claim}\label{claim3}
$Y_2\neq \emptyset$.
\end{claim}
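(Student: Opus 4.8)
\noindent\textbf{Proof proposal for Claim \ref{claim3}.}
The plan is to argue by contradiction, assuming $Y_2=\emptyset$, and then to locate a $5$-vertex cut satisfying Property $P$ whose associated odd component is strictly smaller than $G_1$, contradicting the minimality in the choice of $X_e^u$. The natural candidate is the cut $X_{e}^{u}$ viewed from the side of $v$: recall that, since $uv$ is of type $1$, the set $S_{uv}\cup\{u\}=X_{e}^{v}$ is also a $5$-vertex cut of $G$ satisfying Property $P$, with $v$ in an odd component $O_2$ and $E(V(O_2),\{u\})=\{uv\}$. If $Y_2=\emptyset$, then $X_{e_1}^u=Y_1\cup T$ lies entirely inside $V(G_1)\cup X_e^u$, so $X_{e_1}^u$ avoids $V(G_2)$ altogether; this should force $G_2$ (which contains $v$) to sit inside a single component of $G-X_{e_1}^u$, namely $G_4$, and hence $H_3=V(G_2)\cap V(G_3)=\emptyset$ and $V(G_2)\subseteq H_2$.

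First I would record the structural consequences of $Y_2=\emptyset$: we get $|X_1|+|Y_1|+|T|\geq 6$ from (\ref{31}), while $|Y_1|+|T|=5$, forcing $|X_1|\geq 1$, and by (\ref{032}) $|Y_1|>|X_2|$, so $|X_2|\leq 3$. Next I would use Claim \ref{claim500} together with the fact that $V(G_2)\subseteq H_2$: every vertex of $T$ has neighbors in at most two of $H_0,H_1,H_2,H_3$, and with $H_3=\emptyset$ this restricts how $T$ meets the three surviving blocks $H_0,H_1,H_2$. The key point is that $v\in X_1\cup T$ and $N(v)\cap V(G_1)=\{u\}$ forces $v\notin Y_1$, so $v\in X_1\cup T$; combined with $E(V(O_2),\{u\})=\{uv\}$ one deduces that $v$ together with $X_2\cup(\text{part of }T)$ separates $H_2$ from the rest, giving a $5$-vertex cut of the form $(X_{e}^{u}\cap(V(G_3)\cup T))$ reorganized around $v$ whose odd component containing $v$ is contained in $H_2\cup\{\text{few vertices}\}$. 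Checking that this cut has a $2$-matching and that $v$'s component is odd and meets $u$ only in the edge $uv$ (or applying the analogous argument to the edge $uv$ from $v$'s side, using that $G[V(G_3)\cap \cdots]$ has bounded order by (\ref{033})) yields a Property-$P$ cut whose distinguished odd component has order at most $|H_0|+|X_1|\leq |V(G_1)|-|Y_1|<|V(G_1)|$, the contradiction we want.

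The main obstacle I anticipate is verifying that the newly constructed $5$-set is genuinely a \emph{vertex cut} with the correct matching number $|M(\cdot)|=2$ and that the component containing the distinguished vertex is \emph{odd}; parity bookkeeping here is delicate because it mixes the two partitions, and one must rule out the degenerate possibilities that the cut degenerates to size $<5$ or that $v$'s side becomes even. I would handle the matching-number requirement by exhibiting an explicit $2$-matching inside the relevant induced subgraph (using that $G$ is $3$-connected, so the blocks $H_i$ are nontrivially attached) and ruling out a perfect matching of size $3$ on the $5$-set by a claw-freeness argument as in the proof of Claim \ref{m3}. Parity is then forced by the identity $|V(G)|=2n$ together with the known parities of $V(G_1)$, $V(G_3)$, $X_e^u$ and $X_{e_1}^u$ and the disjoint decomposition into the $H_i$'s. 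Once these two points are nailed down, the minimality of $V(G_1)$ delivers the contradiction and establishes $Y_2\neq\emptyset$.
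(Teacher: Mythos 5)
Your overall strategy (construct another Property-$P$ cut whose distinguished odd component is smaller than $G_1$, in the spirit of the proof of Claim \ref{claim1}) is legitimate in principle, but the execution breaks down at the very first structural step. From $Y_2=\emptyset$ you conclude that $G_2$ ``sits inside a single component of $G-X_{e_1}^u$, namely $G_4$'', hence $H_3=\emptyset$ and $V(G_2)\subseteq H_2$. This does not follow: $v$ does not lie in $G_2$ (by Property $P$, $v\in X_e^u$), $G_2$ is by definition the union of \emph{all} components of $G-X_e^u$ other than $G_1$ and need not be connected, and nothing prevents some of its components from lying in $G_3$ and others in $G_4$; so $H_3$ need not vanish. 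Indeed, the paper's own treatment of the assumption $Y_2=\emptyset$ goes the opposite way: it applies Lemma \ref{next3} to the cut $X_e^u\setminus\{v\}$ (whose removal separates $H_2$ from the rest), concluding that every vertex of $X_e^u\setminus\{v\}$ has a neighbour in $H_2$ and hence $X_1=\{v\}$; then Claim \ref{claim401} forces $N(v)\cap H_3\neq\emptyset$, so $H_3\neq\emptyset$; finally, by $3$-connectivity two vertices of $T$ have neighbours in $H_3$, they also have neighbours in $H_2$ by the Lemma \ref{next3} step, so by Claim \ref{claim500} they are not adjacent to $u$, giving $d_G(u)\leq 4$ and contradicting $\delta(G)\geq 6$. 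So the contradiction is a degree count at $u$, not a minimality argument, and your shortcut to $H_3=\emptyset$ is not available.

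Even setting that aside, what remains is not a proof. The replacement $5$-set is never actually specified (``$v$ together with $X_2\cup(\text{part of }T)$'', ``reorganized around $v$''), and none of the four conditions of Property $P$ (vertex cut, $|M(\cdot)|=2$, oddness of the distinguished component, condition (iv)) is verified; you list precisely these as anticipated obstacles rather than resolving them. The closing size estimate is also incoherent: the component is supposed to lie essentially in $H_2$, yet it is bounded by $|H_0|+|X_1|$, and the inequality $|H_0|+|X_1|\leq |V(G_1)|-|Y_1|=|H_0|+|H_1|$ would require $|X_1|\leq|H_1|$, which is nowhere justified. Finally, the extremal cut $X_e^u$ was chosen among \emph{all} Property-$P$ cuts and need not equal $S_{uv}\cup\{v\}$ for the Tutte-type set $S_{uv}$ of Theorem \ref{AC1994}, so ``viewing it from $v$'s side'' as $S_{uv}\cup\{u\}$ has no controlled relation to the partition $H_0,H_1,H_2,H_3,X_1,X_2,Y_1,Y_2,T$ on which your argument is supposed to run. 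As it stands the proposal does not establish Claim \ref{claim3}.
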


\noindent Proof of Claim \ref{claim3}.
Suppose to the contrary that $Y_2=\emptyset$.
Since $N(v)\cap H_2=\emptyset$ and $H_2\neq \emptyset$,
$X_{e}^{u}\backslash \{v\}$ is a vertex cut of $G$.
Since $G[X_{e}^{u}\backslash \{v\}]$
has a perfect matching with size $2$, by Lemma \ref{next3},
$|H_2|$ is even and every vertex in $X_{e}^{u}\backslash \{v\}$
has a neighbor in $H_2$. So $X_1=\{v\}$.
Note that $|X_1|+|Y_2|+|T|=|X_1|+|T|<6$. By Claim \ref{claim401},
$N(v)\cap H_3\neq \emptyset$. So $H_3\neq \emptyset$.
Since $G$ is $3$-connected, $T$ contains
at least two vertices which have neighbors in $H_3$.
By Claim \ref{claim500},
$T$ contains at least two vertices which are not adjacent to $u$.
Since $N(u)\subseteq Y_1\cup T\cup X_1$ and
$|Y_1\cup T\cup X_1|=|Y_1\cup T\cup \{v\}|=6$, $d_G(u)\leq 4$, a contradiction.
\hfill $\square$

\begin{claim}\label{claim4}
$T\neq \emptyset$.
\end{claim}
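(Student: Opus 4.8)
\textbf{Proof proposal for Claim \ref{claim4}.}

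The plan is to mimic the structure of the previous three claims: assume $T=\emptyset$ and derive a contradiction, using the cardinality inequalities (\ref{31})--(\ref{033}) together with Claims \ref{claim1}--\ref{claim3} and the no-claw condition. With $T=\emptyset$ we have $X_{e}^{u}=X_1\cup X_2$ and $X_{e_1}^{u}=Y_1\cup Y_2$, both of size $5$, so $|X_1|+|X_2|=5$ and $|Y_1|+|Y_2|=5$. By Claims \ref{claim2} and \ref{claim3}, $X_2\neq\emptyset$ and $Y_2\neq\emptyset$; by (\ref{032})--(\ref{033}), $|Y_1|>|X_2|$ and $|X_1|>|Y_2|$. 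Feeding these into $|X_1|+|X_2|=|Y_1|+|Y_2|=5$ pins down the sizes tightly: I expect to get $|X_1|=|Y_1|=4$ and $|X_2|=|Y_2|=1$ as essentially the only possibility, since $|X_1|>|Y_2|=5-|Y_1|$ and $|Y_1|>|X_2|=5-|X_1|$ force $|X_1|+|Y_1|>5$, hence (being integers summing with their complements to $5$) $|X_1|=|Y_1|=4$, $|X_2|=|Y_2|=1$. Then (\ref{31}) gives $d_G(u)=|X_1|+|Y_1|=8$ and $N(u)=X_1\cup Y_1$ by (\ref{30}).

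Next I would exploit the $5$-vertex cut $X_{e}^{u}$ with $M(X_{e}^{u})=2$ and the fact that $G-X_{e}^{u}$ has the odd component $G_1$ and the even part $G_2$. Write $M(X_{e}^{u})=\{x_1y_1,x_2y_2\}$ and let $a$ be the unmatched vertex of $X_{e}^{u}$; since $v\in X_{e}^{u}$ and $N(v)\cap V(G_1)=\{u\}$ with $N(v)\cap H_2=\emptyset$ forced once we know where $v$ sits. The key structural input is that $X_2$ is a single vertex, say $X_2=\{z\}$, lying in $V(G_4)$, and symmetrically $Y_2=\{w\}$ lies in $V(G_2)$. I would apply Claim \ref{claim003}: for each edge of $M(X_{e}^{u})$ some end-vertex has a neighbor in $V(G_1)$. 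Combined with the claw-freeness at $u$ — whose entire neighborhood is $X_1\cup Y_1$ with $X_1\subseteq X_{e}^{u}$, $Y_1\subseteq X_{e_1}^{u}$ — and with Lemma \ref{next1} applied to $2$-matchings of the form $\{e',vv'\}$ where $e'\in M(X_{e}^{u})$ avoids $G_1$ and $v'\in N(v)\cap V(G_2)$, I expect to trap the cut edges so that $z$ (the lone vertex of $X_2$) becomes a cut vertex separating $H_1\cup\{u_1\}$ from $H_2$ inside $G[V(G_4)\cup\{u_1\}]$, or else $G[V(G_1)]$ produces a claw at some vertex of $X_1\cup\{u\}$. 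Either way we contradict $3$-connectivity or claw-freeness or the minimality of $|V(G_1)|$, exactly as in Claims \ref{claim1}--\ref{claim3}.

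The main obstacle I anticipate is the bookkeeping of where the $2$-matching edges of $X_{e}^{u}$ and $X_{e_1}^{u}$ cross the four blocks $H_0,H_1,H_2,H_3$: with $T=\emptyset$ every vertex of $X_{e}^{u}$ lies in $X_1\subseteq V(G_3)$ or $X_2\subseteq V(G_4)$, and similarly for $X_{e_1}^{u}$, so I must carefully track which $H_i$'s each matched vertex can reach without creating a claw or an ``orphaned'' odd component after deleting a $2$-matching. The cleanest route is probably to first show $H_0\neq\emptyset$ and $H_3\neq\emptyset$ (the former because $u\in H_0$ and $|V(G_1)|>1$ with $N(u)\cap H_0=\emptyset$ by (\ref{30}); the latter via Claim \ref{claim401} using $|X_1|+|Y_2|+|T|=|X_1|+|Y_2|=4+1=5<6$, forcing $N(v)\cap H_3\neq\emptyset$, so $H_3\neq\emptyset$), then count edges from $V(G_1)$ into $X_{e}^{u}$ using $3$-connectivity of each relevant induced subgraph, and finally invoke the fact that $|X_2|=1$ cannot support the $\geq 2$ edges that $3$-connectivity demands between $H_2$ and the rest once $T=\emptyset$, mirroring the proof of Claim \ref{claim3}. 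This last counting step is where I expect the contradiction to crystallize.
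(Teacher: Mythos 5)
There is a genuine gap, and it is at the very step your whole plan rests on. From $T=\emptyset$, $|X_1|+|X_2|=|Y_1|+|Y_2|=5$, $X_2\neq\emptyset$, $Y_2\neq\emptyset$ (Claims \ref{claim2}, \ref{claim3}) and $|Y_1|>|X_2|$, $|X_1|>|Y_2|$ ((\ref{032}), (\ref{033})), you can only conclude $|X_1|+|Y_1|\geq 6$; the jump to $|X_1|=|Y_1|=4$, $|X_2|=|Y_2|=1$ is false. For instance $|X_1|=|Y_1|=3$, $|X_2|=|Y_2|=2$ (or $|X_1|=4$, $|Y_1|=2$, $|X_2|=1$, $|Y_2|=3$) satisfies all of (\ref{31})--(\ref{033}) and Claims \ref{claim2}--\ref{claim3}. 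Everything after that point in your sketch -- the ``lone vertex $z$ of $X_2$'', and the closing count that a single vertex cannot carry the edges $3$-connectivity demands between $H_2$ and the rest -- depends on this pinning, so the argument collapses exactly where the real work begins. Worse, since with $T=\emptyset$ there are no edges between $X_1\subseteq V(G_3)$ and $X_2\subseteq V(G_4)$, nor between $Y_1\subseteq V(G_1)$ and $Y_2\subseteq V(G_2)$, the $2$-matchings of the two cuts split, and the paper's analysis shows $|X_2|$ and $|Y_2|$ are even; so the one configuration you reduced to never occurs, and the cases you discarded are the only ones that must be refuted. (Minor additional slip: (\ref{31}) and (\ref{30}) give $d_G(u)\leq |X_1|+|Y_1|$, not $d_G(u)=8$.)

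The paper's proof handles precisely those remaining cases, by a mechanism your sketch never reaches: the splitting just mentioned yields a perfect matching $M_2$ of $G[X_2\cup Y_2]$ with $|M_2|\leq 2$ and a perfect matching $M_1$ of $G[(X_1\setminus\{v\})\cup(Y_1\setminus\{u_1\})]$ with $|M_1|\geq 2$; since $H_2\neq\emptyset$ (Claim \ref{claim1}), $X_2\cup Y_2$ is a vertex cut, so Lemma \ref{next3} forces $|M_2|=2$, hence $|X_2|+|Y_2|=4$, $|X_1|+|Y_1|=6$ and $|H_2|$ even; a parity count then shows $|H_0\setminus\{u\}|$ is odd, and since $u$, $v$ and $u_1$ have no neighbours in $H_0\setminus\{u\}$, deleting the $2$-matching $M_1$ leaves an odd component, contradicting $2$-extendability via Lemma \ref{next1}. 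Your second half (``I expect to trap the cut edges \dots or else \dots'') is explicitly conjectural and does not identify this, or any concrete, contradiction for $|X_2|+|Y_2|\geq 3$; to repair the proof you would need to carry out an argument of the above parity/odd-component type rather than the $3$-connectivity count, which only works in the (unreachable) case $|X_2|+|Y_2|=2$.
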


\noindent Proof of Claim \ref{claim4}.
Suppose to the contrary that $T=\emptyset$.
By (\ref{31}), $|X_1\backslash \{v\}|+|Y_1\backslash \{u_1\}|\geq 4$.
Since $|M(X_{e}^{u}\backslash \{v\})|=2$ and $|M(X_{e_1}^{u}\backslash \{u_1\})|=2$,
$G[(X_1\backslash \{v\})\cup (Y_1\backslash \{u_1\})]$ has a perfect matching $M_1$
and $|M_1|\geq 2$. By (\ref{32}), $|X_2|+|Y_2|\leq 4$.
Then $G[X_2\cup Y_2]$ has a perfect matching $M_2$ and $|M_2|\leq 2$.
By Claim \ref{claim1}, $X_2\cup Y_2$ is a vertex cut of $G$.
Combining with Lemma \ref{next3},
$|M_2|\geq 2$. Then $|M_2|=2$ and $|M_1|=2$. So $|H_2|$ is even.
Thus $|X_2|+|Y_2|=4$ and $|X_1|+|Y_1|=6$.

Since $|V(G_4)|$, $|H_2|$ and $|X_2|$ are even, $|H_1|$ is even.
Because $V(G_1)=H_0\cup H_1\cup Y_1$ and $|V(G_1)|$, $|Y_1|$ are odd,  $|H_0\backslash \{u\}|$ is odd.
Noticing that $u, v$ and $u_1$ have no neighbors in $H_0\backslash \{u\}$,
$G-V(M_1)$ has an odd component $G[H_0\backslash \{u\}]$.
By Lemma \ref{next1}, $G$ is not $2$-extendable, a contradiction.
\hfill $\square$

\begin{claim}\label{claim5}
If $G[X_2\cup Y_2\cup T]$ has a perfect matching $M_2$,
then $M_2\nsubseteq M(X_{e}^{u})\cup M(X_{e_1}^{u})$.
\end{claim}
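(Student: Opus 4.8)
The plan is to assume, for contradiction, that $M_2\subseteq M(X_{e}^{u})\cup M(X_{e_1}^{u})$, and first to pin down the cardinalities involved. Distributing $N(H_2)\subseteq(V(G_2)\cup X_e^u)\cap(V(G_4)\cup X_{e_1}^u)$ shows $N(H_2)\subseteq H_2\cup X_2\cup Y_2\cup T$; since $H_2\neq\emptyset$ by Claim \ref{claim1} while $u\in H_0$ lies outside, $X_2\cup Y_2\cup T$ is a vertex cut of $G$. As $G[X_2\cup Y_2\cup T]$ has the perfect matching $M_2$, Lemma \ref{next3} gives $|M_2|\geq 2$, hence $|X_2\cup Y_2\cup T|\geq 4$; combined with $(\ref{32})$ this forces $|X_2\cup Y_2\cup T|=4$ and $|M_2|=2$, say $M_2=\{g,h\}$.

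Next I would determine the shape of $M_2$. Since $X_2=X_e^u\cap V(G_4)$ is disjoint from $X_{e_1}^u$, any edge of $M_2$ meeting $X_2$ must lie in $M(X_e^u)$, hence have its other end in $X_e^u\cap(X_2\cup Y_2\cup T)=X_2\cup T$; symmetrically, any edge of $M_2$ meeting $Y_2$ lies in $M(X_{e_1}^u)$ with its other end in $Y_2\cup T$. Because $X_2,Y_2\neq\emptyset$ (Claims \ref{claim2} and \ref{claim3}) and $|M_2|=2$, this pins $M_2$ down: $g\in M(X_e^u)$ with $V(g)\subseteq X_2\cup T$ covering all of $X_2$, $h\in M(X_{e_1}^u)$ with $V(h)\subseteq Y_2\cup T$ covering all of $Y_2$, and $V(g)\cup V(h)=X_2\cup Y_2\cup T$. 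Hence $(|X_2|,|Y_2|,|T|)\in\{(2,1,1),(1,2,1),(1,1,2)\}$; in each case $|X_e^u|=|X_{e_1}^u|=5$ gives $|X_1|+|Y_1|+|T|=6$, so by $(\ref{30})$ and $\delta(G)\geq 6$ we obtain $N(u)=X_1\cup Y_1\cup T$ and $d_G(u)=6$.

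Finally I would derive the contradiction. Applying Lemma \ref{next3} to the cut $X_2\cup Y_2\cup T$ with the size-$2$ perfect matching $M_2$, every component of $G-(X_2\cup Y_2\cup T)$ is even and every vertex of $X_2\cup Y_2\cup T$ has a neighbour in each of them. From the block decomposition $V(G_1)=H_0\cup H_1\cup Y_1$ and $V(G_3)=H_0\cup H_3\cup X_1$: both are connected, avoid the cut, share $u\in H_0$, and together exhaust $V(G)\setminus(X_2\cup Y_2\cup T)$ except for $H_2$; thus $G-(X_2\cup Y_2\cup T)$ is the disjoint union of the (even) components of $G[H_2]$ and the single even component $D:=G[H_0\cup H_1\cup H_3\cup X_1\cup Y_1]$, which contains both $G_1$ and $G_3$. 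Now I would use the block-forced adjacency restrictions — a vertex of $X_2$ has neighbours only in $H_1\cup H_2\cup X_2\cup Y_1\cup Y_2\cup T$, a vertex of $Y_2$ only in $H_2\cup H_3\cup Y_2\cup X_1\cup X_2\cup T$, and a vertex of $T$ in at most two of $H_0,H_1,H_2,H_3$ (Claim \ref{claim500}) hence, being forced to meet $H_2$, in at most one of $H_0,H_1,H_3$ — together with Claim \ref{claim003} (each edge of $M(X_e^u)$ meets $V(G_1)$), Claim \ref{claim401}, the fact that the vertex of $X_e^u$ (resp. $X_{e_1}^u$) left uncovered by $M(X_e^u)$ (resp. $M(X_{e_1}^u)$) cannot lie in $X_2$ (resp. $Y_2$), and $N(u)=X_1\cup Y_1\cup T$, to run through the three cases. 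In each case I expect to obtain either an induced claw centred at $u$ (three pairwise nonadjacent neighbours) or a $2$-matching — assembled from the ``spare'' edges $g'$ (the other edge of $M(X_e^u)$) and $h'$ (the other edge of $M(X_{e_1}^u)$) together with an edge of the form $uu_1$ or a witness edge from $v$ (or $u_1$) into a neighbouring block — whose deletion leaves an odd component, contradicting Lemma \ref{next1}.

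The main obstacle is precisely this last step: one must track carefully where the (one or two) vertices of $T$ attach among the blocks $H_0,H_1,H_3,X_1,Y_1$ and to one another, and then verify that the $2$-matching one writes down genuinely disconnects an odd component rather than leaving a bridging vertex. The size pattern $(1,1,2)$, in which $T$ contributes two vertices split between $g$ and $h$, is likely to be the most delicate sub-case.
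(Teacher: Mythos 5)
Your preparatory steps are sound and even streamline the paper's opening: deducing from Claim \ref{claim1} that $X_2\cup Y_2\cup T$ is a vertex cut, getting $|M_2|\geq 2$ from Lemma \ref{next3}, and combining with (\ref{32}) to force $|X_2\cup Y_2\cup T|=4$ and $|M_2|=2$ is correct (the paper reaches $|M_2|=2$ by a slightly longer route), as is the observation that the two edges of $M_2$ split as one edge of $M(X_{e}^{u})$ inside $X_2\cup T$ covering $X_2$ and one edge of $M(X_{e_1}^{u})$ inside $Y_2\cup T$ covering $Y_2$ (to exclude the pattern $(|X_2|,|Y_2|,|T|)=(2,2,0)$ you should cite Claim \ref{claim4}), and hence $N(u)=X_1\cup Y_1\cup T$ with $d_G(u)=6$.

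But the proposal stops exactly where the claim has to be earned. The entire final step is conditional (``in each case I expect to obtain either an induced claw centred at $u$ or a $2$-matching whose deletion leaves an odd component''), and you yourself flag it as the main obstacle. Nothing in the sketch guarantees either outcome: $N(u)$ may induce many edges, so no claw at $u$ is forced, and to exhibit a $2$-matching isolating an odd component you first need to know that some leftover block has odd order, which your setup never establishes. The paper supplies precisely this missing mechanism, with no case split on $(|X_2|,|Y_2|,|T|)$: from $M_2\subseteq M(X_{e}^{u})\cup M(X_{e_1}^{u})$ it extracts a second $2$-matching $M_1\subseteq M(X_{e}^{u})\cup M(X_{e_1}^{u})$ that is a perfect matching of $G[((X_1\cup T)\setminus \{v\})\cup (Y_1\setminus \{u_1\})]$ --- these are exactly your ``spare'' edges $g'$ and $h'$, which you name but never use; then a parity count (the sets $X_2\cup Y_2\cup T$ and $(Y_1\setminus\{u_1\})\cup Y_2\cup T$ are even, so $|X_2|$ and $|Y_1|$ have different parity, and together with $|H_2|$ even from Lemma \ref{next3}, $|V(G_4)|$ even and $|V(G_1)|$ odd this yields $|H_0\setminus \{u\}|$ odd); and finally the observation that all neighbours of $H_0\setminus\{u\}$ outside itself lie in $\{u\}\cup X_1\cup Y_1\cup T$, while $u$, $v$ and $u_1$ --- the only such vertices not in $V(M_1)$, by (\ref{30}) and Property $P$ --- have no neighbours in $H_0\setminus\{u\}$. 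Hence $G-V(M_1)$ has an odd component, contradicting Lemma \ref{next1}. Without this (or an equivalent) endgame, your proposal is an unfinished plan rather than a proof of Claim \ref{claim5}.
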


\noindent Proof of Claim \ref{claim5}.
Suppose to the contrary that $M_2\subseteq M(X_{e}^{u})\cup M(X_{e_1}^{u})$.
Take any vertex $x$ in $T$. We know that $x$ is saturated by $M_2$.
If $x$ is matched to a vertex in $Y_2$ by $M(X_{e_1}^{u})$,
then $x$ is matched to a vertex in $X_1$ by $M(X_{e}^{u})$.
If $x$ is matched to a vertex in $X_2$ by $M(X_{e}^{u})$,
then $x$ is matched to a vertex in $Y_1$ by $M(X_{e_1}^{u})$.
Thus $x$ can not be matched to a vertex in $X_1$ (resp. $X_2$)
by $M(X_{e}^{u})$ and a vertex in $Y_1$ (resp. $Y_2$) by $M(X_{e_1}^{u})$.
So $G[((X_1\cup T)\backslash \{v\})\cup (Y_1\backslash \{u_1\})]$
has a perfect matching $M_1$
and $M_1\subseteq M(X_{e}^{u})\cup M(X_{e_1}^{u})$.
Since $|X_2\cup Y_2 \cup T|$ and
$|(Y_1\backslash \{u_1\}) \cup Y_2 \cup T|$ are even,
$|X_2|$ and $|Y_1|$ have different parity.

By Claim \ref{claim1}, $X_2\cup Y_2\cup T$ is a vertex cut of $G$.
By Lemma \ref{next3}, $|M_2|\geq 2$.
Since $|M(X_{e}^{u})|+|M(X_{e_1}^{u})|=|M_1|+|M_2|=4$, $|M_1|\leq 2$.
By (\ref{31}),
$|((X_1\cup T)\backslash \{v\})\cup (Y_1\backslash \{u_1\})|\geq 4$.
Then $|M_1|\geq 2$. So $|M_1|=2$ and $|M_2|=2$.

By Lemma \ref{next3}, $|H_2|$ is even. Note that $V(G_4)=H_1\cup H_2\cup X_2$ and $|V(G_4)|$ is even.
If $|X_2|$ is even, then $|Y_1|$ is odd and $|H_1|$ is even.
Since $V(G_1)=H_0\cup H_1\cup Y_1$ and $|V(G_1)|$ is odd, $|H_0|$ is even.
Similarly, if $|X_2|$ is odd, then $|H_0|$ is even.
In either case, $|H_0\backslash \{u\}|$ is odd.
Since $N(u_1)\cap (H_0\cup X_1)=\{u\}$ and $N(v)\cap (H_0\cup Y_1)=\{u\}$,
$N(u_1)\cap (H_0\backslash \{u\})=\emptyset$ and $N(v)\cap (H_0\backslash \{u\})=\emptyset$.
Thus $G-V(M_1)$ has an odd component $G[H_0\backslash \{u\}]$.
By Lemma \ref{next1}, $G$ is not $2$-extendable, a contradiction.
\hfill $\square$

\bigskip
By Claim \ref{claim5}, there exists a vertex $x$ in $T$ such that
$x$ satisfies at least one of the following two conditions:

(\romannumeral1) $x$ is matched to a vertex in $X_1$ by $M(X_{e}^{u})$
and a vertex in $Y_1$ by $M(X_{e_1}^{u})$;
\hfill $(\dag)$

(\romannumeral2) $x$ is matched to a vertex in $X_2$ by $M(X_{e}^{u})$
and a vertex in $Y_2$ by $M(X_{e_1}^{u})$.
\hfill $(\ddag)$




\smallskip
In the sequel, the red (resp. blue) edges represent the edges
in $M(X_{e}^{u})$ (resp. $M(X_{e_1}^{u})$).
By (\ref{032}), (\ref{033}), Claims \ref{claim2} and \ref{claim3},
we obtain that $|Y_1|>|X_2|$, $|X_1|>|Y_2|$,
$X_2 \neq \emptyset$ and $Y_2 \neq \emptyset$.
Since $v\in X_1\cup T$, we divide the proof into the following two cases.

\begin{case}\label{case011}
$v\in X_1$.
\end{case}

If the vertices in $T$ only satisfy the condition $(\dag)$,
then $|X_2|=2$ and $|Y_2|=2$.
But $|Y_1|=2$, a contradiction to (\ref{032}).
Thus the vertices in $T$ only satisfy the condition $(\ddag)$
(as shown in Fig. \ref{tu-main3} (a))
or satisfy conditions $(\dag)$ and $(\ddag)$
(as shown in Fig. \ref{tu-main3} (b)).

\begin{figure}[h]
\centering
\includegraphics[height=4.5cm,width=9cm]{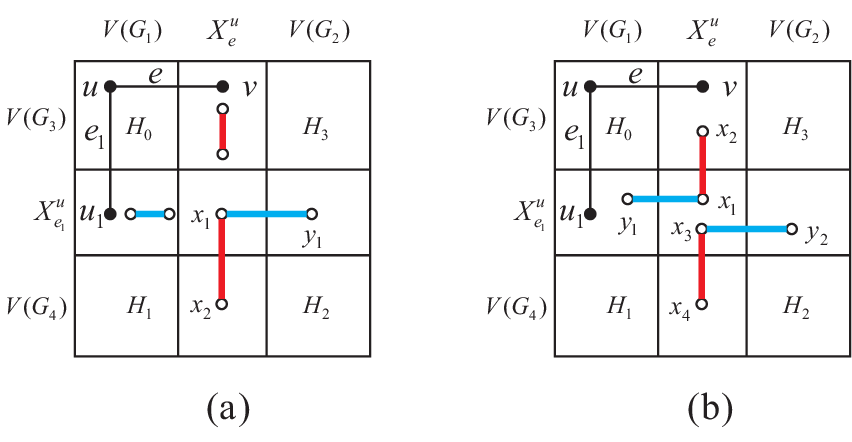}
\caption{\label{tu-main3} (a) The vertices in $T$ only satisfy
the condition $(\ddag)$;}
{(b) The vertices in $T$ satisfy conditions $(\dag)$
and $(\ddag)$.}
\end{figure}

(a) Since $G[H_1\cup H_2\cup \{x_2, u_1\}]=G[V(G_4)\cup \{u_1\}]$ is connected,
$N(x_2)\cap H_1 \neq \emptyset$ and $N(x_2)\cap H_2\neq \emptyset$.
Note that $\{x_1, x_2, y_1\}$ is a 3-vertex cut of $G$ and
$\{x_1y_1\}$ is a 1-matching in $G[\{x_1, x_2, y_1\}]$.
By Lemma \ref{next2}, $G$ is not $2$-extendable, a contradiction.

(b) In this case, $|X_1|+|Y_1|+|T|=6$. Combining (\ref{30})
and $\delta(G)\geq 6$, $N(u)=\{u_1, v, x_1, x_2, x_3, y_1\}$.
If $N(x_1)\cap H_2=\emptyset$, then $\{x_3, x_4, y_2\}$ is a 3-vertex cut of $G$.
The proof is similar to the case (a).
So $N(x_1)\cap H_2\neq \emptyset$.
By Claim \ref{claim500}, $N(x_1)\cap H_1=\emptyset$.
Moreover, $N(x_3)\cap H_1=\emptyset$.
Otherwise, let $x_{3}'\in N(x_3)\cap H_1$.
Then $G[\{x_3, u, y_2, x_{3}'\}]$ is a claw, a contradiction.
Since $|X_2|+|Y_1|+|T|=5$,
$N(u_1)\cap H_1\neq \emptyset$ by Claim \ref{claim401}.
So $H_1\neq \emptyset$. Then $\{u_1, y_1, x_4\}$ is a 3-vertex cut of $G$.
Because $G[V(G_4)\cup \{u_1\}]=G[H_1\cup H_2\cup \{x_4, u_1\}]$ is
connected, $N(x_4)\cap H_1\neq \emptyset$ and $N(x_4)\cap H_2\neq \emptyset$.
Note that $\{u_1y_1\}$ is a 1-matching in $G[\{u_1, y_1, x_4\}]$.
By Lemma \ref{next2}, $G$ is not $2$-extendable, a contradiction.

\begin{case}\label{case012}
$v\in T$.
\end{case}

Since $N(v)\cap V(G_1)=\{u\}$, $v$ is matched to a vertex in $T$ or
$Y_2$ by $M(X_{e_1}^{u})$. Let $vv_1\in M(X_{e_1}^{u})$.
Suppose that $v_1\in T$.
If $v_1$ is matched to a vertex in $T$ by $M(X_{e}^{u})$,
then $X_1=\emptyset$ as $X_2\neq \emptyset$. By Claim \ref{claim3},
$Y_2\neq \emptyset$, a contradiction to (\ref{033}).
If $v_1$ is matched to a vertex in $X_1$ by $M(X_{e}^{u})$,
then $Y_1=\{u_1\}$ as $Y_2\neq \emptyset$. By Claim \ref{claim2},
$X_2\neq \emptyset$, a contradiction to (\ref{032}).
If $v_1$ is matched to a vertex in $X_2$ by $M(X_{e}^{u})$,
then $|X_2|\geq 1$. By Claim \ref{claim3}, $Y_2\neq \emptyset$.
So $Y_1=\{u_1\}$, a contradiction to (\ref{032}).

Suppose that $v_1\in Y_2$. Then $|X_1|\geq 2$
by (\ref{033}). Since $X_2\neq \emptyset$, $|X_2|=1$ or $|X_2|=2$.
By (\ref{032}), $|X_2|<|Y_1|$.
It follows that $|X_2|=2$ and $|Y_1|=3$
(as shown in Fig. \ref{tu-main4} (c))
or $|X_2|=1$ and $|Y_1|=2$ (as shown in Fig. \ref{tu-main4} (d)).
Both of the two cases contradict Claim \ref{claim5}.

\begin{figure}[h]
\centering
\includegraphics[height=4.5cm,width=9.5cm]{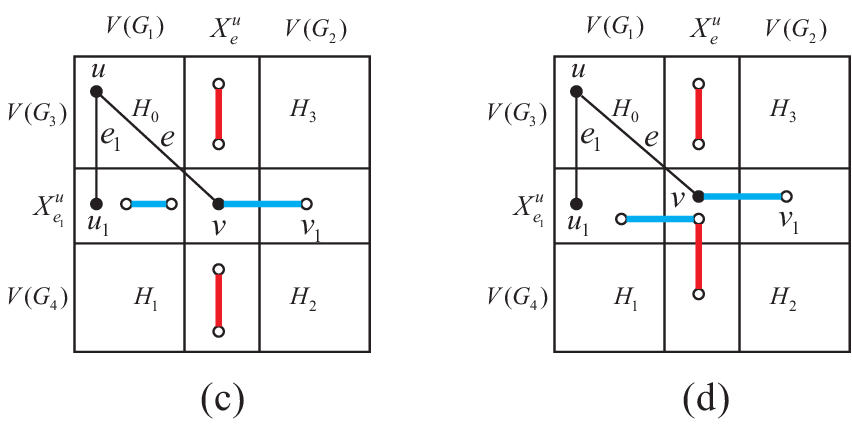}
\caption{\label{tu-main4} (c) $|X_2|=2$ and $|Y_1|=3$;
(d) $|X_2|=1$ and $|Y_1|=2$.}
\end{figure}

From above discussions, it is impossible that $\delta(G) \geq 6$.
Therefore, $\delta(G)=4$ or $5$.
\hfill $\square$

\bigskip

Finally, we have shown that the minimum degree of a minimal
3-extendable claw-free graph is either 6 or 7.
Thus we make the following conjecture for further investigation.

\begin{conj}\label{conj}
The minimum degree of a minimal $k$-extendable claw-free graph is
either $2k$ or $2k+1$.
\end{conj}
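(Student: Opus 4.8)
The plan is to prove Conjecture~\ref{conj} by the same two-sided strategy that settles the case $k=2$. The lower bound $\delta(G)\ge 2k$ is already Theorem~\ref{PL}, so the whole task reduces to the upper bound $\delta(G)\le 2k+1$, which I would establish by contradiction: assume $\delta(G)\ge 2k+2$ and contradict minimality via Theorem~\ref{AC1994}. The first and most technical ingredient is a verbatim generalization of Lemma~\ref{lem1}. For a smallest set $S_e$ produced by Theorem~\ref{AC1994} on an edge $e=uv$, I would prove $|M(S_e)|=k$ and that exactly two structural types occur: \emph{type~$1$} with $|S_e|=2k$ and two nontrivial odd components $O_1\ni u$, $O_2\ni v$; and \emph{type~$2$} with $|S_e|=2k+1$, three odd components with $O_1=\{u\}$ and $O_2=\{v\}$ trivial, no even component, and the unmatched vertex $w\in S_e\setminus V(M(S_e))$ satisfying $\{uw,vw\}\subseteq E(G)$ and $N(w)\cap V(O_3)\ne\emptyset$. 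Claw-freeness does the heavy lifting in the supporting claims (generalizing Claims~\ref{m1}, \ref{m3}, \ref{m4}, \ref{m6}): a vertex of $S_e$ with neighbors in four distinct odd components yields a claw, while neighbors in at most two contradict the minimality of $S_e$; the edge-counting identity $|S_e|=|E(S_e,U_e\setminus\{u',v'\})|\ge (k+1)(|S_e|-2k)$, the factor $k+1$ coming from $(k+1)$-connectivity (Theorem~\ref{P}), forces $|S_e|\le 2k+2$, and the residual configuration $|M(S_e)|=k+1$, $|S_e|=2k+2$ is eliminated by the same independent-set-and-claw argument, giving $|M(S_e)|=k$ and $|S_e|\in\{2k,2k+1\}$.

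With this dichotomy in hand I would port the edge-type framework: every edge is of type~$1$ or type~$2$ but not both (generalizing Proposition~\ref{prop1}); a type~$2$ edge forces $N(u)\setminus\{v\}=N(v)\setminus\{u\}$ (Proposition~\ref{prop3}); and, generalizing Proposition~\ref{prop4}, the number of type~$2$ edges at any fixed vertex is bounded by a quantity strictly below $2k+2$, so with $\delta(G)\ge 2k+2$ every vertex carries a type~$1$ edge. Each type~$1$ edge $uv$ then supplies a $(2k+1)$-vertex cut $S_{uv}\cup\{v\}$ satisfying the natural generalization of Property~$P$ (Definition~\ref{def2} with $5$ replaced by $2k+1$ and the matching size $2$ replaced by $k$), with $u$ lying in an odd component that meets $v$ only along $uv$.

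The heart of the argument, and the step I expect to be the main obstacle, is the crossing-cuts analysis generalizing Cases~\ref{case011} and~\ref{case012}. Among all $(2k+1)$-cuts satisfying Property~$P$, I would select one whose odd component $G_1\ni u$ has minimum order, pick $u_1\in N(u)\cap V(G_1)$, note that $e_1=uu_1$ is necessarily type~$1$ (since $v\in N(u)\setminus N(u_1)$ forces distinct neighborhoods, so Proposition~\ref{prop3} rules out type~$2$), and obtain a second Property~$P$ cut $X_{e_1}^u$. Writing $T=X_e^u\cap X_{e_1}^u$ and splitting the rest into $X_1,X_2,Y_1,Y_2$ and the four overlap cells $H_0,\dots,H_3$, the counting identities generalize cleanly: claw-freeness gives $N(u)\subseteq X_1\cup Y_1\cup T$, whence $|X_1|+|Y_1|+|T|\ge 2k+2$, $|X_2|+|Y_2|+|T|\le 2k$, and therefore $|Y_1|>|X_2|$ and $|X_1|>|Y_2|$. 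Lemma~\ref{next3} forces $|M(X_2\cup Y_2\cup T)|\ge k$ wherever $X_2\cup Y_2\cup T$ is a cut, and the decisive parity step (generalizing Claim~\ref{claim5}) would show that the two matchings $M(X_e^u)$ and $M(X_{e_1}^u)$ cannot be simultaneously realized inside $G[X_2\cup Y_2\cup T]$ without leaving $G[H_0\setminus\{u\}]$ as a forbidden odd component, excluded by Lemma~\ref{next1}.

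The genuine difficulty is that for general $k$ the number of ways the $k$ ``red'' and $k$ ``blue'' edges can thread through $T$ grows with $k$, so the finite case split of the $k=2$ proof (Figs.~\ref{tu-main3}--\ref{tu-main4}) must be replaced by a single uniform invariant valid for every alignment of the two matchings. The most promising candidate is a parity count on $|H_0\setminus\{u\}|$ driven by the identities $V(G_1)=H_0\cup H_1\cup Y_1$ and $V(G_4)=H_1\cup H_2\cup X_2$ together with the oddness of $|V(G_1)|$ and $|X_e^u|$; establishing that this parity obstruction survives \emph{simultaneously} for all matching alignments, rather than configuration-by-configuration, is the crux of extending the theorem to arbitrary $k$.
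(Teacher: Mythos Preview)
The statement you are attempting is Conjecture~\ref{conj}, which the paper explicitly leaves open: the authors only remark that they have verified the case $k=3$ and then pose the general statement ``for further investigation.'' There is therefore no proof in the paper to compare your attempt against.

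Your proposal is not a proof but a programme, and you say so yourself. The outline faithfully mirrors the architecture of the $k=2$ argument, and several of the pieces do generalize as you indicate: the edge-counting bound $(k+1)(|S_e|-2k)\le |S_e|$ under $|M(S_e)|\ge k+1$ is correct and yields $|S_e|\le 2k+2$; the elimination of $|S_e|\ge 2k+3$ with $|M(S_e)|=k$ via a claw on $u$ and three independent unmatched vertices also survives. However, two places deserve more than a wave of the hand. First, the residual case $|M(S_e)|=k$, $|S_e|=2k+2$, $t=4$ (the analogue of Claim~\ref{m6} for $|S_e|=6$) is disposed of in the paper by a delicate configuration argument that does not obviously scale; you do not address it. Second, your generalization of Proposition~\ref{prop4} is stated only as ``bounded by a quantity strictly below $2k+2$''; the paper's bound is in fact~$2$, and its proof relies on the explicit five-vertex structure of $S_e$, so you would need to redo that argument for $|S_e|=2k+1$ rather than invoke it.

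The gap you yourself name --- replacing the finite enumeration of red/blue alignments in Cases~\ref{case011}--\ref{case012} by a uniform parity invariant --- is the real obstruction, and nothing in your write-up closes it. The parity of $|H_0\setminus\{u\}|$ is indeed governed by $|X_2|$ and $|Y_1|$ via the identities you list, but in the $k=2$ proof the contradiction is \emph{not} obtained from that parity alone: several configurations (Fig.~\ref{tu-main3}(a),(b)) are killed instead by producing a $3$-vertex cut and invoking Lemma~\ref{next2}, which has no direct analogue once $k\ge 3$ and the relevant cuts have size $2k-1$. Until you exhibit a single argument that handles all alignments simultaneously, the proposal remains a plausible strategy for an open problem rather than a proof.
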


\end{document}